\documentclass[a4paper,12pt]{article} 
\pdfoutput=1

\usepackage[margin=1in]{geometry}

\usepackage{amssymb,amsmath,amsthm,txfonts}
\usepackage{hyperref}
\usepackage{ifpdf}
\usepackage{color}
\usepackage{graphicx}
\usepackage{microtype}
\usepackage{paralist}

\newcommand{\useamsrefs}{} 

\ifdefined\useamsrefs
\usepackage[initials,backrefs,msc-links]{amsrefs}
\else
    \newcommand{\cites}{\cite}
\fi

\ifdefined\texmaker
	\usepackage[color]{showkeys} 
\fi

\swapnumbers
\newtheorem{theorem}{Theorem}[section]
\newtheorem{lemma}[theorem]{Lemma}
\newtheorem{corollary}[theorem]{Corollary}
\newtheorem{proposition}[theorem]{Proposition}

\theoremstyle{definition}
\newtheorem{definition}[theorem]{Definition}
\theoremstyle{remark}
\newtheorem{remark}[theorem]{Remark}
\newtheorem{example}[theorem]{Example}

\numberwithin{equation}{section}


\IfFileExists{.git/git.tex}{
\input{.git/git.tex}
\newcommand{\gitversionfootnote}{\blfootnote{Version: \gitauthsdate, \gitshash}}
}
{
\newcommand{\gitversionfootnote}{}
}

\makeatletter
\def\blfootnote{\xdef\@thefnmark{}\@footnotetext} 
\makeatother


\ifpdf
  \graphicspath{{}}
  \newcommand{\fig}[2]{
    \IfFileExists{#1.pdf_tex}{
      \def\svgwidth{#2}\input{#1.pdf_tex}
    }{
      \frame{Missing figure ``#1.pdf''}
      \message{LaTeX Warning: Missing figure ``#1.pdf'' on input line \the\inputlineno}
    }
  }
\else
  \graphicspath{{img/eps/}}
  \newcommand{\fig}[2]{
    \IfFileExists{img/eps/#1.eps_tex}{
      \def\svgwidth{#2}\input{img/eps/#1.eps_tex}
    }{
      \frame{Missing figure ``#1.eps''}
      \message{LaTeX Warning: Missing figure ``#1.eps'' on input line \the\inputlineno}
    }
  }
\fi

\newcommand{\dimension}{n}

\newcommand{\dist}{\operatorname{dist}}

\newcommand{\interior}{\operatorname{int}}
\newcommand{\trace}{\operatorname{trace}}

\newcommand{\esssup}{\operatorname{ess\ sup}\displaylimits}
\newcommand{\essinf}{\operatorname{ess\ inf}\displaylimits}


\newcommand{\grad}{\nabla}
\newcommand{\divo}{\operatorname{div}}



\newcommand{\dx}{\;dx}

\newcommand{\diff}[1]{\;d{#1}}

\newcommand{\ov}[1]{\frac{1}{#1}}


\newcommand{\abs}[1]{\left|#1\right|}
\newcommand{\pth}[1]{\left(#1\right)}
\newcommand{\bra}[1]{\left[#1\right]}
\newcommand{\set}[1]{{\left\{#1\right\}}}
\newcommand{\at}[2]{{{\left.{#1}\right|}_{#2}}}
\newcommand{\ang}[1]{{\left\langle#1\right\rangle}}
\newcommand{\norm}[1]{\left\|#1\right\|}

\newcommand{\cl}[1]{\overline{#1}}	

\newcommand{\al}{\ensuremath{\alpha}}
\newcommand{\be}{\ensuremath{\beta}}
\newcommand{\ga}{\ensuremath{\gamma}}
\newcommand{\de}{\ensuremath{\delta}}
\newcommand{\e}{\ensuremath{\varepsilon}}
\newcommand{\vp}{\ensuremath{\varphi}}
\newcommand{\la}{\ensuremath{\lambda}}
\newcommand{\si}{\ensuremath{\sigma}}
\newcommand{\ta}{\ensuremath{\theta}}


\newcommand{\R}{\ensuremath{\mathbb{R}}}

\newcommand{\Rd}{\ensuremath{{\mathbb{R}^{\dimension}}}}
\newcommand{\Rn}{\Rd}
\newcommand{\Z}{\ensuremath{\mathbb{Z}}}

\newcommand{\T}{\ensuremath{\mathbb{T}}}



\newcommand{\pd}[2]{\frac{\partial {#1}}{\partial {#2}}}

\newcommand{\td}[2]{\frac{d {#1}}{d {#2}}}

\definecolor{grey}{rgb}{0.6,0.6,0.6}


\renewcommand{\labelenumi}{(\alph{enumi})}

\newcommand{\romanlist}{\renewcommand{\labelenumi}{\textup{(}\roman{enumi}\textup{)}}}

\newcommand{\argmax}{\operatorname{arg\,max}\displaylimits}
\newcommand{\argmin}{\operatorname{arg\,min}\displaylimits}

\newcommand{\Tn}{{\T^\dimension}}
\newcommand{\Lip}{\textrm{Lip}}
\DeclareMathOperator{\sign}{sign}
\newcommand{\dom}{\mathcal{D}}
\DeclareMathOperator*{\halflimsup}{\star-limsup}
\DeclareMathOperator*{\halfliminf}{\star-liminf}
\newcommand{\uu}{\underline{u}}
\newcommand{\ou}{\overline{u}}
\newcommand{\parahead}[1]{\bigskip\noindent\textbf{{#1}.}}

\begin{document}

\title{
Periodic total variation flow of non-divergence type in $\Rn$
}
\author{Mi-Ho Giga \and Yoshikazu Giga \and Norbert Po\v{z}\'{a}r}
\date{}
\maketitle

\begin{abstract}
We introduce a new notion of viscosity solutions
for a class of very singular nonlinear parabolic problems
of non-divergence form
in a periodic domain of arbitrary dimension,
whose diffusion on flat parts with zero slope is so strong that
it becomes a nonlocal quantity.
The problems include the classical total variation flow
and a motion of a surface by a crystalline mean curvature.
We establish a comparison principle,
the stability under approximation by regularized parabolic problems,
and an existence theorem for general continuous initial data.
\end{abstract}
\blfootnote{\noindent
M.-H. Giga $\cdot$ Y. Giga $\cdot$ N. Po\v{z}\'{a}r\\         
Graduate School of Mathematical Sciences,
University of Tokyo,
3-8-1 Komaba Meguro-ku,
Tokyo 153-8914, Japan,
\nolinkurl{npozar@ms.u-tokyo.ac.jp}
}
\blfootnote{\emph{Keywords}:
phase transitions, curvature flows, crystalline mean curvature,
total variation flow,
viscosity solutions,
comparison theorems}
\blfootnote{\emph{Mathematics Subject Classification
(2010)}:
35K67, 
35D40,  
35K55,  
35B51,  
35K93  
}
\gitversionfootnote

\section{Introduction}

We are interested in the well-posedness of the initial value problem
for a class of very singular parabolic equations.
They are characterized by the strong diffusive effect on flat
parts of the solution with zero slope.
In fact, the
diffusion is so strong that the equation becomes nonlocal and it
is no longer a classical partial differential equation.
The problems
with such structure have attracted considerable attention due to
their importance in image processing and crystal growth modeling.
A typical example of a problem of divergence type
is the standard total variation flow equation.
Its well-posedness is not trivial
but the theory of monotone operators yields
the unique solvability
of the initial value problem \cite{ACM04}.
However, this theory does not apply if the problem lacks
a divergence structure
and the unique
solvability of the initial value problem has been a long-standing open
problem except in the case of one space variable for which the theory
of viscosity solutions is extendable \cites{GG98ARMA,GG99CPDE}.
Because of the ubiquity of such higher dimensional,
non-divergence form problems
in crystal growth modeling,
it is desirable to find a suitable generalized notion of solutions
and establish the unique solvability of the initial value problem.

Our goal in this paper is to solve this long-standing open problem
in a periodic domain of arbitrary dimension $n \geq 1$ by introducing
a new notion of viscosity solutions.
In full generality, we consider a generalized total variation flow
equation for a function $u(x,t)$ 
on the $n$-dimensional torus $\Omega = \T^n := \Rn/\Z^n$:
\begin{align}
\label{tvf}
    &u_t + F\pth{\nabla u, \divo \pth{\frac{\nabla u}{\abs{\nabla u}}}} = 0
        & &\text{in $Q := \Tn \times (0,T)$,}
\intertext{with the initial condition}
\label{tvf-ic}
    &\at{u}{t=0} = u_0
        && \text{on $\Tn$,}
\end{align}
where $u_t = \pd ut$, $\nabla u = \pth{\pd u{x_1}, \ldots, \pd u{x_n}}$
and $\divo z = \pd {z_1}{x_1} + \cdots + \pd {z_n}{x_n}$.
We assume that $F : \Rn \times \R \to \R$ is a continuous function,
non-increasing in the second variable,
i.e., the operator is \emph{degenerate elliptic}:
\begin{align}
\label{degenerate-ellipticity}
    F(p,\xi) &\leq F(p, \eta)
            && \text{for $\xi,\eta \in \R$, $\xi \geq \eta$, $p \in \Rn$}.
\end{align}
The initial data $u_0$ is assumed to be a continuous (periodic) function
$u_0 \in C(\T^n)$.
We identify functions from $C(\T^n)$ with $\Z^n$-periodic
functions in $C(\Rn)$.

The prototypical example of \eqref{tvf},
with many applications in image processing,
is the standard total variation flow
\begin{align}
\label{standard-tvf}
u_t = \divo \pth{\frac{\nabla u}{\abs{\nabla u}}},
\end{align}
which is formally the $L^2$-gradient flow of the total variation energy
\begin{align*}
 E(\psi) = \int_\Omega \abs{\nabla \psi} \dx.
\end{align*}
The gradient flow structure can be exploited to employ
the abstract theory of monotone operators and obtain well-posedness
for general data in Hilbert space $L^2$;
we refer the reader to \cite{ACM04} for a detailed exposition
and references.
This interpretation will serve
as the basis of evaluating $\divo (\nabla u/\abs{\nabla u})$.
 
The motion of an interfacial surface
by a crystalline mean curvature
is an example of a problem in non-divergence form,
\begin{align}
\label{crystalline-mean-curvature}
u_t = \sqrt{1 + \abs{\nabla u}^2}
    \pth{\divo \pth{\frac{\nabla u}{\abs{\nabla u}}} + c},
\end{align}
where $c$ is a constant forcing.
This is the graph formulation of the anisotropic mean curvature
flow
of a surface
$\Gamma_t = \set{(x, u(x,t)): x \in \Rn} \subset \Rn\times \R$
of codimension one, given as the graph of function $u$,
moving with velocity 
\begin{align}
\label{curvature-flow}
V_\nu = \kappa_\ga + c
\end{align}
in the direction of the unit normal
vector $\nu = (-\nabla u, 1)/\sqrt{1 + \abs{\nabla u}^2}$.
The quantity $\kappa_\ga$ represents the crystalline mean curvature
which is the first variation of the surface energy
\begin{align*}
\int_\Gamma \ga(\nu) \diff{\mathcal{H}^n},
\end{align*}
see \cites{GPR,BNP01a},
where $\mathcal{H}^n$ denotes the $n$-dimensional
Hausdorff measure.
The Finsler metric $\ga(x,u)$ has the particular form
\begin{align*}
\ga(x,u) = \abs{x} + \abs{u},\qquad x\in\Rn, u \in \R,
\end{align*}
and the Wulff shape, which represents the surface
of constant curvature $\kappa_\ga$,
is the unit cylinder
\[W_\ga = \set{(x,u) : \max (\abs{x}, \abs{u}) \leq 1} \subset \R^{n+1}.\]
We mention that $\gamma$ is not purely crystalline in the traditional
sense for $n \geq 2$,
because usually only Finsler metrics with
a polytope as their Wulff shape are considered crystalline;
see \cite{B10} for an introduction to the topic.
We observe that on the flat parts of $u$ with zero slope,
$\kappa_\gamma$ and $\divo (\nabla u/\abs{\nabla u})$ coincide.
Motivated by this example,
we shall call the quantity $\divo (\nabla u/ \abs{\nabla u})$
the \emph{nonlocal curvature},
and we will denote it $\Lambda$ for short.

\parahead{Main result}
The goal of this paper is to introduce
a notion of viscosity solutions of \eqref{tvf},
including \eqref{crystalline-mean-curvature},
and prove its well-posedness as stated in the following theorem.

\begin{theorem}[Main theorem]
Suppose that a continuous function $F : \Rn \times \R \to \R$
is degenerate elliptic in the sense of \eqref{degenerate-ellipticity}.
Then the initial value problem \eqref{tvf}--\eqref{tvf-ic}
with $u_0 \in C(\Tn)$
has a unique global viscosity solution
$u \in C(\T^n \times [0,\infty))$.
If additionally $u_0 \in \Lip(\T^n)$,
i.e., $u_0$ is a periodic Lipschitz function,
then $u(\cdot, t) \in \Lip(\Tn)$ for all $t \geq 0$ and
\begin{align*}
    \norm{\nabla u(\cdot, t)}_\infty &\leq \norm{\nabla u_0}_\infty
    && \text{for $t \geq 0$.}
\end{align*}
\end{theorem}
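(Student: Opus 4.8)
\parahead{Proof strategy}
The plan is to deduce all three assertions from two structural results for viscosity solutions of \eqref{tvf}: a \emph{comparison principle} and a \emph{stability property} under approximation by regularized nonsingular problems. The feature that drives everything is that $F$ involves the unknown only through $\grad u$ and the nonlocal curvature $\Lambda = \divo(\grad u/\abs{\grad u})$, and that both of these are invariant under adding a constant to $u$ and under translating $u$ in space; hence \eqref{tvf} is invariant under $u \mapsto u(\cdot+h,\cdot)+c$ for every $h\in\Rn$ and $c\in\R$. The core step is the comparison principle: if $u$ is a viscosity subsolution and $v$ a viscosity supersolution of \eqref{tvf} on $Q$ with $u(\cdot,0)\le v(\cdot,0)$ on $\Tn$, then $u\le v$ on $\Tn\times[0,T)$. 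Granting it, uniqueness is immediate, by applying comparison to any two solutions in both directions.

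For existence I would regularize the singular term, replacing $\grad u/\abs{\grad u}$ by $\grad u/\sqrt{\abs{\grad u}^2+\e^2}$ and, if necessary, mollifying $F$, so as to obtain a family of nonsingular, degenerate parabolic equations with $x$-independent operators, for which the standard viscosity solution theory provides unique global solutions $u^\e\in C(\Tn\times[0,\infty))$ with $u^\e(\cdot,0)=u_0$. Comparing $u^\e$ with spatially constant barriers gives an $L^\infty$ bound growing at most linearly in $t$, uniformly in $\e$; applying the translation argument to the regularized problems yields a spatial modulus of continuity inherited from that of $u_0$, uniformly in $\e$; and barriers near $t=0$ --- for general continuous $u_0$ after first reducing to Lipschitz data, where a uniform spatial Lipschitz bound is available --- give a time modulus of continuity, again uniformly in $\e$. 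Arzel\`a--Ascoli then extracts a locally uniform limit $u\in C(\Tn\times[0,\infty))$ with $u(\cdot,0)=u_0$, the stability property shows $u$ solves \eqref{tvf}, and the comparison principle shows that the whole family $u^\e$ converges to this unique $u$. (Alternatively, given the comparison principle together with suitable barriers, existence follows by Perron's method.)

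For the regularity statement, assume $u_0\in\Lip(\Tn)$. For $h\in\Rn$ the function $u(\cdot+h,\cdot)$ solves \eqref{tvf} with initial data $u_0(\cdot+h)$; adding the constant $c:=\norm{u_0(\cdot+h)-u_0}_\infty$ and applying the comparison principle twice gives $\norm{u(\cdot+h,t)-u(\cdot,t)}_\infty\le\norm{u_0(\cdot+h)-u_0}_\infty\le\norm{\grad u_0}_\infty\,\abs{h}$ for every $t\ge0$. Hence each $u(\cdot,t)$ is Lipschitz, and letting $\abs h\to0$ yields $\norm{\grad u(\cdot,t)}_\infty\le\norm{\grad u_0}_\infty$ for all $t\ge0$, as claimed.

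The genuinely hard part is the comparison principle. On the zero-slope facets of a solution the quantity $\Lambda$ is nonlocal, so one must first isolate a test-function class on which $\Lambda$ is meaningful, show that sub- and supersolutions may legitimately be tested against it, and then control this nonlocal $\Lambda$ within the doubling-of-variables estimate --- which is exactly the role of the new notion of viscosity solution. The stability statement used in the existence step is delicate for the same reason, since one must match the nonlocal $\Lambda$ with its regularized approximation as $\e\to0$. Once comparison and stability are in hand, the remaining assembly --- the compactness limit, uniqueness, the barrier estimates yielding continuity up to $t=0$, and the gradient bound via translation invariance --- is routine.
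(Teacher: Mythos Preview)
Your strategy matches the paper's: comparison principle plus stability under smooth approximation, with the Lipschitz bound derived exactly as you do, by translation invariance and comparison. The paper also identifies comparison and stability as the two genuinely hard ingredients and devotes Sections~4 and~5 to them.

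Where you and the paper diverge is in the assembly of the existence proof. You propose to first manufacture uniform moduli of continuity for the approximants $u^\e$ (spatial via translation, temporal via barriers after reducing to Lipschitz data) and then pass to the limit by Arzel\`a--Ascoli. The paper instead takes half-relaxed limits $\ou=\halflimsup u_m$ and $\uu=\halfliminf u_m$, which exist with no equicontinuity input; stability makes $\ou$ a subsolution and $\uu$ a supersolution, and the only remaining task is to show $\ou(\cdot,0)=\uu(\cdot,0)=u_0$. This is where the barriers enter, and here the paper's construction is more delicate than your sketch suggests: because the regularized operator degenerates at $\nabla u=0$ as the regularization vanishes, naive cone or paraboloid barriers do not work uniformly in the approximation parameter. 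The paper builds barriers from the Wulff functions $W_m^*$ of the approximating energies $E_m$ (Lemmas~\ref{le:Wulff}--\ref{le:cutoff-wulff-solutions}), which have the property that the approximate curvature operator acts on them as a constant; suitable cutoffs then give super- and subsolutions that pin down the initial data in the half-relaxed limits. Your Arzel\`a--Ascoli route would ultimately need the same barrier idea to get the time modulus uniformly in $\e$, so the saving is mostly organizational: the half-relaxed-limit route isolates the barrier argument as the single place where uniform-in-$\e$ control is needed.
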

We also establish a comparison principle for viscosity solutions,
and a stability of solutions under approximation
by regularized problems. Consequently,
we observe that in the case of the standard total variation flow
equation \eqref{standard-tvf} our viscosity solutions
coincide with the semigroup (weak) solutions
given by the theory of monotone operators.

\parahead{Viscosity solutions}
The theory of monotone operators
does not apply to problems that are not of divergence form.
Fortunately,
the problem \eqref{tvf} still has 
a parabolic structure
so that it is expected to have a comparison principle
or an order preserving property.
This structure puts \eqref{tvf} in the scope of the theory of
viscosity solutions.
However, the conventional viscosity theory
for degenerate parabolic problems does not apply
to \eqref{tvf} because the quantity
\[
\Lambda = \divo \pth{\frac{\nabla u}{\abs{\nabla u}}}
    := \frac{1}{\abs q} \trace X\pth{I - \frac{q \otimes q}{\abs{q}^2}}
\]
is unbounded around $q = 0$,
where $q = \nabla u$ and $X = \nabla^2 u$, the Hessian of $u$.
In fact, the singularity at $q =0$ is so strong that
the problem has a nonlocal nature
in the sense that $\Lambda$ is a nonlocal
quantity on the place where $\nabla u = 0$.
This phenomenon still occurs even if \eqref{tvf} is of a divergence form
\cite{FG}
so the equation like \eqref{tvf} is often called a very singular
diffusion equation \cite{GG10}.
If the unboundedness near $\nabla u = 0$
is relatively weak so that the problem is still a local problem
like $p$-Laplace diffusion equation
$u_t - \divo(\abs{\nabla u}^{p-2}\nabla u) =0$
with $1 < p < 2$,
the theory of viscosity solutions has been well-established
\cites{Goto94,IS,OhnumaSato97,Giga06}.
There has been a considerable effort
to generalize the viscosity theory to \eqref{tvf};
however, so far the results have been restricted
to the one-dimensional case
\cites{GG98ARMA,GG01ARMA,GGR11}
or to related level set equations for evolving planar curves
\cites{GG00Gakuto,GG01ARMA};
see also review paper \cite{G04}.

We give a new notion of viscosity solutions
so that the initial value problem is well-posed in higher
dimension at least for periodic initial data.
Following the previous results,
we adjust the class of test functions 
to be compatible with the strong singularity of the operator
at $\nabla u = 0$.
To handle this situation,
we generalize the notion of a \emph{facet},
which will include arbitrary closed sets with nonempty interior.
Then we carefully craft an appropriate class of admissible
\emph{faceted} test functions,
which can be viewed as a natural extension of
the one-dimensional case \cite{GG98ARMA}.
This class consists of Lipschitz functions
whose level set is a facet with a smooth boundary.
The value of the nonlocal curvature
$\divo (\nabla \phi/ \abs{\nabla \phi})$
for such a $\phi$
can be interpreted as
the divergence of a vector field
that is a solution of a certain vector-valued
obstacle problem on the facet.
In contrast to the one-dimensional case,
the nonlocal curvature might be non-constant and even discontinuous
(see \cites{BNP99,BNP01IFB}).
Nevertheless,
we observe that it is only necessary
to evaluate its essential infima and suprema on balls of small radius,
and a pointwise definition of viscosity solutions is possible.
We finally finish the discussion of the construction
of the class of test functions
by pointing out that the operator is a standard differential operator
when the gradient of the solution is nonzero
and the conventional test functions of nonzero gradient can be employed.

The two crucial steps
in establishing a reasonable theory of viscosity solutions
are proving the comparison principle (Theorem~\ref{th:comparison})
and proving the existence result (Theorem~\ref{th:existence}).
The proof of the comparison principle is quite involved
because of the nonlocal nature of the curvature-like quantity
$\Lambda = \divo(\nabla \phi/ \abs{\nabla\phi})$,
which may be discontinuous on facets.
When $\phi_1, \phi_2$ are $C^2$ near $\hat x$ and
$\nabla \phi_1(\hat x) \neq 0$,
the maximum principle asserts that if $\phi_1 \leq \phi_2$
near $\hat x $ and $\phi_1(\hat x) = \phi_2(\hat x)$,
then $\Lambda_1(\hat x)\leq \Lambda_2(\hat x)$
where $\Lambda_i = \divo(\nabla \phi_i/\abs{\nabla\phi_i})$.
This type of monotonicity is necessary for establishing
the standard comparison principle for viscosity solutions \cite{Giga06}.
Our key observation is that this type of monotonicity of $\Lambda$
is still valid on a faceted region (Theorem~\ref{th:monotonicity}).
The proof of the comparison principle is based on a contradiction argument
by assuming the existence of sub- and supersolutions $u$ and $v$,
respectively, satisfying $u\leq v$ at $t = 0$
but $u > v$ at some point for $t \in (0, T)$.
We double the variables with an extra small parameter $\zeta \in \Rn$
and study the maxima of
\begin{align*}
\Phi_\zeta &:= u(x,t) - v(y,s) - \Psi_\zeta &
    &\text{for $x, y \in \T^n$, $t, s \in [0,T]$},\\
\Psi_\zeta &:= \abs{x-y-\zeta}^2/2\e + S,\\
S &:= \abs{t-s}^2/2\sigma + \gamma/(T - t) + \gamma/(T-s),
\end{align*}
where $\e, \si, \ga$ are small positive parameters.
The introduction of $\zeta$ helps flatten the profiles of $u$ and $v$
near a maximizer
$(\hat x, \hat t, \hat x, \hat s)$ of $\Phi_0$.
The history of this device,
which was developed in \cite{GG98ARMA},
goes back to \cite{CGG} and \cite{Goto94}.
Excluding the case where the conventional theory applies,
we may conclude (Corollary~\ref{co:extended-ordering})
that, by fixing small parameters $\e, \si, \ga$,
\begin{align*}
u(x, t)
    &\leq u(\hat x, \hat t) + S(t, \hat s) - S(\hat t, \hat s)
    &&\text{for $x \in V^\la$},\\
v(y, s)
    &\geq v(\hat x, \hat s) + S(\hat t, \hat s) - S(\hat t, s)
    &&\text{for $y \in U^\la$},
\end{align*}
for $t, s \in [0, T]$ for some $\la > 0$,
where
\[
U = \set{x \in \Tn : u(x,\hat t) \geq u(\hat x, \hat t)}
\quad \text{and} \quad 
V = \set{y \in \Tn: v(y, \hat s) \leq v(\hat x, \hat s)},
\]
and $U^\la$ and $V^\la$ are $\la$-neighborhoods of $U$ and $V$,
respectively.
This flattening procedure gives us enough room to construct
the desired admissible test functions for both $u$ and $v$
so that their related nonlocal curvatures are ordered,
which yields a contradiction.

The existence result is proved via stability under approximation
by smooth problems (Theorem~\ref{th:stability})
and by constructing barriers to prevent initial boundary layers
(Lemma~\ref{le:cutoff-wulff-solutions}).
The proof of stability is nontrivial because
of the discrepancy of test functions of the limit problem
and of the smooth problems. 
The key idea is to approximate the spatial profile $f$ of a test function
using the resolvent operator, that is,
formally,
\[
f_a = (I + a \partial E)^{-1} f \qquad \text{for }a > 0,
\]
where $E$ is the total variation energy
and $\partial E$ is its subdifferential.
In our procedure,
we approximate $E$ by a smooth energy $E_m$
to approximate $f_a$ by
\[
f_{a,m} = (I + a \partial E_m)^{-1} f.
\]
The merit of the resolvent operator is that the nonlocal
curvature is well approximated by $(f_{a,m} - f)/a$,
and $f_{a,m}$ is a suitable test function of the smooth problem.
The construction of barriers is also nonstandard but
it is possible to elaborate the idea of \cite{GG99CPDE}.

We are fully aware of the importance of other boundary
conditions like the Dirichlet condition or the Neumann condition.
However, we do not intend to study them in this paper
as these problems seem to need several new ideas.

\parahead{Literature overview}
There is a large amount of literature devoted to the study
of the motion by crystalline mean curvature.
They are classified into at least three approaches--polygonal,
variational and viscosity.

The first approach--a polygonal approach--is to understand
the crystalline curvature $\kappa_\gamma$
for a class of crystal bodies like polygonal shapes.
For two dimensional crystals,
$\kappa_\gamma$ is a constant on facets parallel
to the facets of the Wulff shape $W_\gamma$.
Therefore it is possible to define evolution
of solutions for a class of polygons having such facets when
$\gamma$ is crystalline in strict sense so that its Wulff shape
is a convex polygon.
Such a special family of solutions is called a crystalline flow
or a crystalline motion
introduced in 
\cites{AG89,Taylor91}.
Although there is a higher dimensional approach \cite{GGM},
its validity is limited since
the anisotropic curvature $\kappa_\gamma$ might not be constant
on flat facets \cite{BNP99}
and instant facet breaking or bending may occur.
For a further development of crystalline motion the reader is
referred to \cite{Ishiwata08}.

The second approach--a variational approach--is to understand
$\kappa_\gamma$ as a subdifferential of  the corresponding singular
interfacial energy
when the equation has a divergence structure.
In \cite{FG} it is shown that crystalline motion
can be interpreted as the evolution given by the abstract theory
of monotone operators \cites{Br,Ko}.
This approach enables us to approximate crystalline motion by evolution
by smooth energy and vice versa;
see also \cites{GirK,Gir}
for approximation by crystalline motion.
However, even for one-dimensional problem it is noted
that $\kappa_\gamma + c$ in \eqref{curvature-flow} may not be constant
when $c$ is not spatially constant if the solution
is given by the abstract theory \cite{GG98DS}.
This situation is important since $c$ is often
spatially non-constant in the theory of crystal growth.
In this case if one takes the first approach for planar curve
or graph-like function, polygon is not enough
to describe the phenomena.
However, if one allows to include bent polygons
with free boundaries corresponding to the endpoints of a facet,
it is possible to give a rather explicit solution
\cites{GR08,GR09,GGoR11}.

The situation becomes much more delicate for higher dimensional
crystals even if there is no $c$.
As we mentioned before,
the anisotropic curvature $\kappa_\gamma$ might not be constant
on facets, and instant facet breaking or bending might occur
\cite{BNP99}.
This suggests that one cannot restrict the class of solutions
only to bodies with flat facets like polytope with facets
parallel to those
of the Wulff shape in dimension higher than one
or in the case $c$ is not spatially constant.
A more general theory is necessary.
A notion of generalized solutions and a comparison principle
was established through an approximation by
reaction-diffusion equations in \cites{BN00,BGN00}
for $V_\nu = \gamma \kappa_\gamma$.
However, the existence is known only for convex compact initial data
\cite{BCCN06}.

In general, the curvature $\kappa_\ga$
is known to be only of bounded variation
and bounded on a facet  
\cites{BNP01a,BNP01b}.
One has to solve a nontrivial obstacle problem to calculate
$\kappa_\gamma$ as noted in \cite{GPR}.
The facets where $\kappa_\gamma$ is constant
are called \emph{calibrable} \cite{BNP01IFB}.
The study of this property goes back to a work of
E. Giusti \cite{Giusti78},
where a sufficient condition for calibrability
is given for a two-dimensional convex domain.
Calibrability is related to the study of what is called
Cheeger sets \cite{KL06}.

The third approach is an approach based on the theory of
viscosity solutions that relies on a comparison principle structure
of the problem which we already discussed.
This is the approach taken in this paper. 
The merit is that one can prove existence
and uniqueness in the same class of solutions
for \eqref{tvf}--\eqref{tvf-ic}.
The viscosity approach and the variational approach
for equations of divergence form are compared
in a review paper \cite{GG04}.

There is another approach to definition of solutions of
\eqref{tvf} via an original definition
of composition of multivalued operators
which, while restricted to one dimension at the moment,
allows for the study of facet evolution for quite general
data as well as the regularity of solutions,
including the forcing depending on $x$ and $t$
\cites{MR08,KMR}.

\parahead{Outline of the paper}
We begin the exposition in Section~\ref{sec:nonlocal-curvature}
where we give a rigorous interpretation of
the nonlocal curvature operator $\divo (\nabla u/\abs{\nabla u})$.
This development is then used in Section~\ref{sec:viscosity-solutions},
in which we introduce the new notion of viscosity solution.
We proceed with the proof of the comparison principle
for viscosity solution in Section~\ref{sec:comparison-principle}.
Finally, Section~\ref{sec:existence}
is devoted to the proof of existence of a unique viscosity solution
of the problem via an approximation by regularized parabolic problems.

\section{Nonlocal curvature}
\label{sec:nonlocal-curvature}

In this section we will discuss the interpretation of the term
$\divo \pth{\nabla u / \abs{\nabla u}}$
as the subdifferential of the total variation energy functional.

\subsection{Total variation energy}

The interpretation of the term $\divo \pth{\nabla u/\abs{\nabla u}}$
is motivated by considering
the total variation flow in a smooth bounded domain $\Omega$
in $\mathbf{R}^n$ with the Neumann boundary value condition.
Its formal form is
\begin{align}
\label{tvf-pure}
\begin{cases}
u_t = \divo \pth{\frac{\nabla u}{\abs{\nabla u}}}
    \quad&\text{in}\quad\Omega\times(0,\infty),\\
\pd u\nu=0
    &\textup{on}\quad\partial\Omega\times(0,\infty),\\
\at{u}{t=0} = u_0 &\text{on}\quad\Omega,
\end{cases}
\end{align}
where $\nu=\nu_{\partial\Omega}$ denotes
the unit exterior normal of $\partial\Omega$
so that $\pd u\nu$ is the exterior normal derivative of $u$.
Since the equation \eqref{tvf-pure} is singular at $\nabla u=0$,
it is nontrivial to formulate a solution in a rigorous way.
Fortunately, the classical theory of monotone operators
applied to the subdifferential of a total energy gives
a rigorous interpretation \cite{ACM04}.
Let us briefly recall its theory.
We consider the total variation energy $E = E_\Omega$,
\begin{align*}
E(\psi)=
\begin{cases}
\int_\Omega\abs{\nabla\psi}
    &\text{if}\quad\psi\in BV(\Omega)\cap L^2(\Omega), \\
\infty
    &\text{otherwise,}
\end{cases}
\end{align*}
in the Hilbert space $H=L^{2}(\Omega)$.
The energy $E$ is convex and lower semi-continuous on $H$
and evidently $E\not\equiv\infty$.
Hence a general theory of monotone operators,
initiated by
Y. K\=omura \cite{Ko} and developed by H. Br\'{e}zis \cite{Br}
and others,
guarantees that
there is a unique solution $\psi:[0,\infty)\to H$
which is continuous up to $t=0$
and absolutely continuous in $[\delta,T]$ for any $T>\delta>0$ such that
\begin{equation}
\td u t\in -\partial E(u(\cdot, t))
\quad\text{for a.e.}\quad t>0,\quad\text{and}\quad u|_{t=0}=u_0 \label{2.4}
\end{equation}
for a given arbitrary $u_0\in H$.
Here $\partial E$ denotes the subdifferential of $E$, i.e.,
\begin{equation*}
\partial E(\psi)=\bigl\{w\in H\mid E(\psi+h)-E(\psi)\geq\langle w,h\rangle\textup{ for all }h\in H\bigr\},
\end{equation*}
where $\ang{\ ,\ }$ is the inner product of $H$.
In our setting it is the $L^2$-inner product
$\ang{w,h}=\int_{\Omega}wh\dx$.
It can be shown that $\partial E(\psi)$ is a convex,
closed set and in general it can be empty depending on $\psi\in H$.
We introduce the domain of $\partial E$
\begin{align*}
    \mathcal{D}(\partial E)
        = \set{\psi \in H: \partial E(\psi) \neq \emptyset}.
\end{align*}
 The equation (\ref{2.4}) is a rigorous form of \eqref{tvf-pure}.
Note that the solution $u$ is unique although
the equation (\ref{2.4}) may seem ambiguous.
It turns out that the solution $u$ is right differentiable
for all $t>0$ and its right derivative $\textup{d}^{+}u/\textup{d}t$
agrees with the negative of the \emph{minimal section}
(\emph{canonical restriction})
$\partial^{0}E(u(t))$ of $\partial E(u(t))$.
In other words,
\begin{equation*}
\frac{\textup{d}^{+}u}{\textup{d}t}
    =-\partial^{0}E(u(\cdot, t))
        \quad\textup{for all}\quad t>0,
\end{equation*}
where
\begin{equation*}
\partial^{0}E(\psi) :=
    \argmin \set{\norm{w}_H \;\bigm\vert\; w \in \partial E(\psi)}.
\end{equation*}
Since $\partial E(\psi)$ is closed convex,
$\partial^{0}E(\psi)$ is uniquely determined.
There is a characterization of the subdifferentials
which can be found
in \cite[Lemma~2.4]{ACM04} or \cite{AD}.
It states that $w\in\partial E(\psi)$
for $\psi \in L^2(\Omega)$
if and only if $\psi \in BV(\Omega)$
and there exists $z\in L^{\infty}(\Omega; \Rn)$ such that
\begin{subequations}
\label{subdiff-domain-conditions}
\begin{gather}
w=-\textup{div}\,z\in L^{2}(\Omega), \label{2.5} \\
\norm{z}_\infty \leq 1 \quad \text{and}\quad
\ang{w, \psi} = E(\psi),
 \label{2.6} \\
z\cdot\nu_{\partial\Omega}=0\quad\textup{on}\quad\partial\Omega\quad\textup{(in }H^{-1/2}\textup{ sense),} \label{2.7}
\end{gather}
\end{subequations}
where $\nu_{\partial\Omega}$ is the unit outer normal vector to $\partial\Omega$.
In particular,
if $\psi \in \Lip(\Omega)$ then
$\nabla\psi \in L^\infty(\Omega)$
and the condition \eqref{2.6} is equivalent to
$z(x) \in \partial W(\nabla\psi(x))$ for a.e. $x \in \Omega$,
where $\partial W$ is the subdifferential of $W(p) = \abs{p}$.
It is given as
\begin{align*}
    \partial W(p) =
    \begin{cases}
        \cl B_1(0) & p = 0,\\
        \set{\frac{p}{\abs{p}}} & p \neq 0,
    \end{cases}
\end{align*}
where $B_r(x)$ denotes the open ball of radius $r$ centered at $x$.
We observe $z(x)=\nabla \psi(x)/\abs{\nabla \psi(x)}$
whenever $\nabla\psi(x)\neq 0$.

To motivate the following construction,
let us heuristically study the canonical restriction $\partial^{0}E(\psi)$
for a simple profile.
Assume that $\psi$ takes its maximum $0$ in a closure $\cl\Omega_-$
of a domain $\Omega_-$ with the property that
$\cl{\Omega}_-\subset\Omega$
and $\psi<0$ in $\Omega\backslash\cl{\Omega}_-$.
Assume that $z$ satisfying (\ref{2.5})--(\ref{2.7}) exists
and $z=\nabla\psi(x)\big/\bigl|\nabla\psi(x)\bigr|$
in $\Omega\backslash\bar{\Omega}_-$.
Since $\psi$ attains its maximum on $\Omega_-$,
$z\cdot\nu_{\partial\Omega_-}=-1$ on $\partial\Omega_-$,
at least formally, where $\nu_{\partial\Omega_-}$ is
the unit exterior normal of $\partial\Omega_-$,
and we observe that
\begin{align*}
- \partial^0 E(\psi) =
\begin{cases}
    \divo z_0 & \text{in $\Omega_-$},\\
    \divo \frac{\nabla \psi}{\abs{\nabla \psi}} 
        &\text{in $\Omega \setminus \cl \Omega_-$},
\end{cases}
\end{align*}
such that $\divo z_0$ minimizes the variational problem
\begin{equation}
\min\set{\int_{\Omega_-}|\divo z|^2\dx
    \biggm|
    z\cdot\nu_{\partial\Omega_-}=-1
    \text{ on }\partial\Omega_-\textup{ and }
    |z|\leq 1\text{ a.e. in }\Omega_-}. \label{2.8}
\end{equation}
The quantity $\divo \pth{\nabla \psi/\abs{\nabla\psi}}$
depends on the second derivative of $\psi$
and it is nonpositive if $\psi$ is concave.
This quantity is formally the one-Laplacian of $\psi$.
Since it is not a local quantity on $\Omega_-$,
we rather would like to call $\textup{div}\,z_0$
a nonlocal one-Laplacian of $\psi$ on $\Omega_-$.
Geometrically speaking, it corresponds to
a nonlocal mean-curvature-like quantity.
In this paper we shall call this quantity a nonlocal
curvature for short.
The variational problem (\ref{2.8}) is still convex
as it admits at most one minimizer $\textup{div}\,z$,
although there are several degrees of freedom in the choice of $z_0$.
Note that $\divo z_0$ does not depend on the choice $\Omega$
provided that $\cl\Omega_- \subset \Omega$.
In the literature the vector field $z$ satisfying $z\cdot\nu_{\partial\Omega_-}=-1$
on $\partial\Omega_-$ and $|z|\leq 1$
is often called a Cahn-Hoffman vector field.

For our purpose,
we have to consider the total variation energy
for a periodic function.
A periodic function in $\Rn$ is a function satisfying
$\phi(x+e_i) = \psi(x)$ for all $x \in \Rn$
and some orthogonal basis $\set{e_k}_{k=1}^n$ of $\Rn$
(independent of $x$).
It is identified with a function on the $n$-dimensional torus
$\T^n = \R^n/\Gamma$
where $\Gamma$ is the lattice $\Z e_1 + \cdots + \Z e_n$.
The set $\T^n$
is interpreted as the set of equivalence classes $\set{x + \Gamma: x\in\Rn}$
with the induced metric,
topology, and absolute value, namely
\begin{align}
\label{dist-on-torus}
\dist (x, y) &= \dist_{\Rn} (x+\Gamma, y +\Gamma), &
\abs{x} := \dist(y, 0) = \inf_{\xi \in \Gamma} \abs{x + \xi}_{\Rn},
\quad x, y \in \T^n.
\end{align}
In particular, a ball $B_r(x)$ is defined as
$B_r(x) := \set{y \in \T^n: \abs{x - y} < r}$.
To simplify the exposition,
we set $\Gamma = \Z^n$ throughout the paper.

We consider the total variation energy $E = E_{\T^n}$
for periodic $L^2$ functions.
It is of the form
\begin{align*}
E_\Tn(\psi) =
\begin{cases}
\int_\Tn \abs{\nabla \psi} & \text{if } \psi\in BV(\Tn) \cap L^2(\Tn),\\
\infty & \text{otherwise},
\end{cases}
\end{align*}
in the Hilbert space $H=L^2(\Tn)$.
The unique solvability of \eqref{2.4} as well as the characterization of
the speed is still valid for $E = E_\Tn$
with obvious modification.
For example,
\eqref{2.7} is unnecessary.
The characterization \eqref{2.8}
remains unchanged provided that $\cl\Omega_-$ is contained
in a fundamental domain of $\Tn$.
In this case,
$-\partial^0 E_\Tn(\psi)$ agrees with $-\partial^0 E(\psi)$
in $\Omega_-$.
In what follows, we always take $E = E_\Tn$ instead of $E_\Omega$.

\subsection{Resolvent equation}
\label{sec:resolvent-equation}

It is possible to approximate the evolution of 
\eqref{tvf-pure}
via an implicit Euler scheme
with a time step $a > 0$
that has the form of
the following resolvent problem on $\Tn$:
for given $a > 0$ and $f \in L^2(\Tn)$ find $u \in L^2(\Tn)$ such that
\begin{align}
\label{resolvent-problem}
    u + a\partial E(u) \ni f.
\end{align}

Due to the standard theory of monotone operators,
this problem has a unique solution $u \in L^2(\Tn)$,
i.e.,
for every $f \in L^2(\T^n)$ there exist
unique $u \in \mathcal{D}(\partial E)$
and $v \in \partial E(u)$
such that $u + a v = f$;
see for instance \cite[\S9.6.1, Theorem 1]{Evans}.

The following comparison theorem was proved in \cite{CasellesChambolle06}.

\begin{theorem}
\label{th:resolvent-comparison}
Let $u_1, u_2 \in L^2(\Tn)$ be two solutions of
\eqref{resolvent-problem}
with right-hand sides $f_1, f_2 \in L^\infty(\Tn)$, respectively.
If $f_1 \leq f_2$, then $u_1 \leq u_2$.
\end{theorem}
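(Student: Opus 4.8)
The plan is to recast the resolvent equation \eqref{resolvent-problem} as a minimization problem and then exploit two lattice-type properties: the submodularity of the total variation under the pointwise operations $\max$ and $\min$, and an elementary pointwise inequality for the $L^2$ fidelity term. First I would recall that for $a>0$ and $f\in L^2(\Tn)$ the solution $u$ of \eqref{resolvent-problem} is exactly the unique minimizer over $L^2(\Tn)$ of the strictly convex, lower semicontinuous, coercive functional
\[
    G_f(w) := \frac{1}{2a}\norm{w-f}_{L^2(\Tn)}^2 + E(w);
\]
indeed, by the sum rule for subdifferentials $0\in\partial G_f(u)$ is equivalent to $(f-u)/a\in\partial E(u)$, and strict convexity of the quadratic term gives uniqueness of the minimizer. (This in particular reproves the solvability of \eqref{resolvent-problem} quoted above.)

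Let $u_1,u_2$ solve \eqref{resolvent-problem} with data $f_1\le f_2$; by the characterization \eqref{subdiff-domain-conditions} (with \eqref{2.7} dropped on $\Tn$) we have $u_1,u_2\in BV(\Tn)\cap L^2(\Tn)$, hence so do $\ou:=\max(u_1,u_2)$ and $\uu:=\min(u_1,u_2)$. I would then establish the two key inequalities. The first is submodularity of the energy,
\[
    E(\uu)+E(\ou)\le E(u_1)+E(u_2),
\]
which follows from the coarea formula $E(w)=\int_{\R}P(\set{w>t};\Tn)\,dt$, the identities $\set{\ou>t}=\set{u_1>t}\cup\set{u_2>t}$ and $\set{\uu>t}=\set{u_1>t}\cap\set{u_2>t}$, and submodularity of perimeter $P(A\cup B)+P(A\cap B)\le P(A)+P(B)$. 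The second is the pointwise a.e.\ inequality
\[
    \abs{\uu-f_1}^2+\abs{\ou-f_2}^2\le\abs{u_1-f_1}^2+\abs{u_2-f_2}^2,
\]
which is an identity on $\set{u_1\le u_2}$ and on $\set{u_1>u_2}$ reduces, upon expanding the squares, to $2(u_1-u_2)(f_2-f_1)\ge0$, which holds since $f_1\le f_2$. Integrating the second inequality over $\Tn$, dividing by $2a$, and adding the first, I obtain
\[
    G_{f_1}(\uu)+G_{f_2}(\ou)\le G_{f_1}(u_1)+G_{f_2}(u_2).
\]

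To conclude, since $u_1$ minimizes $G_{f_1}$ and $u_2$ minimizes $G_{f_2}$ we also have $G_{f_1}(\uu)\ge G_{f_1}(u_1)$ and $G_{f_2}(\ou)\ge G_{f_2}(u_2)$; combined with the displayed inequality, this forces both $G_{f_1}(\uu)=G_{f_1}(u_1)$ and $G_{f_2}(\ou)=G_{f_2}(u_2)$, and uniqueness of the minimizer of $G_{f_1}$ then gives $\uu=u_1$, i.e.\ $\min(u_1,u_2)=u_1$, i.e.\ $u_1\le u_2$ a.e. The argument is short, and the only input that is not entirely routine is the submodularity of $E$ on $\Tn$; on the torus there is no boundary contribution, so the coarea-plus-perimeter proof goes through verbatim (alternatively one can smooth-approximate and use the a.e.\ identity $\abs{\grad\max(u_1,u_2)}+\abs{\grad\min(u_1,u_2)}=\abs{\grad u_1}+\abs{\grad u_2}$ for Lipschitz functions). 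I expect this submodularity — together with the bookkeeping that $\max$ and $\min$ of $BV$ functions remain $BV$ — to be the main, though standard, obstacle.
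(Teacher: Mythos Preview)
Your argument is correct. The paper's proof is merely a citation: it observes that the monotone-operator solution of \eqref{resolvent-problem} is a solution in the sense of \cite[Definition~2]{CasellesChambolle06} and invokes \cite[Theorem~2]{CasellesChambolle06}. Your route is a self-contained variational proof via the characterization of $u_i$ as the unique minimizer of $G_{f_i}$, submodularity of the total variation under $\min/\max$, and the elementary pointwise inequality for the fidelity term. This lattice-type argument is in fact the standard engine behind comparison results for ROF-type problems (and is essentially what underlies the cited theorem), so the two are morally the same; what your version buys is that it is fully self-contained in the present setting and, as written, does not use the hypothesis $f_1,f_2\in L^\infty(\Tn)$ at all---$f_1,f_2\in L^2(\Tn)$ with $f_1\le f_2$ a.e.\ suffices. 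The one point to keep clean is the submodularity step on $\Tn$; your coarea-plus-perimeter justification is fine, and the Lipschitz approximation alternative you mention also works since $\Tn$ has no boundary.
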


\begin{proof}
We realize that the solution $u \in L^2(\Tn)$
of \eqref{resolvent-problem}
provided by the theory of monotone operators
is a solution in the sense of \cite[Definition 2]{CasellesChambolle06},
and therefore \cite[Theorem 2]{CasellesChambolle06} applies.
\end{proof}

Because $\partial E$ is a maximal monotone operator,
the resolvent problem \eqref{resolvent-problem} offers
an alternative to finding the minimal section element of
the subdifferential $\partial E$.

\begin{proposition}
\label{pr:resolvent-convergence}
Whenever $f \in \mathcal{D}(\partial E)$,
we have
\begin{align*}
\frac{f_a - f}{a} &\to -\partial^0 E(f)
    &&\text{in $L^2(\Tn)$ as $a\to 0$},
\end{align*}
where $f_a$ is the solution $u$ of \eqref{resolvent-problem} for
given $a > 0$.
\end{proposition}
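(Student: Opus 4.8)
The plan is to exploit the classical relationship between the Yosida approximation of a maximal monotone operator and its minimal section. Recall that $f_a = (I+a\partial E)^{-1}f$ is the resolvent, and the Yosida approximation is $A_a f := (f - f_a)/a$. A standard fact from the theory of maximal monotone operators (see e.g.\ Br\'ezis or \cite[\S9.6.1]{Evans}) is that $A_a f \in \partial E(f_a)$ for every $f \in H$, that $\norm{A_a f}_H \leq \norm{\partial^0 E(f)}_H$ whenever $f \in \mathcal{D}(\partial E)$, and that $f_a \to f$ in $H$ as $a \to 0$ for every $f \in \cl{\mathcal{D}(\partial E)}$. Since our goal is exactly $A_a f \to \partial^0 E(f)$ in $L^2(\Tn)$, the proof reduces to verifying these abstract facts apply and then identifying the limit.

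First I would record that $A_a f \in \partial E(f_a)$ directly from the definition \eqref{resolvent-problem}: writing $f = f_a + a v$ with $v \in \partial E(f_a)$ forces $v = (f - f_a)/a = A_a f$. Next, using the defining inequality of the subdifferential together with $\partial^0 E(f) \in \partial E(f)$ and the monotonicity of $\partial E$, one gets the a priori bound $\ang{A_a f - \partial^0 E(f), f_a - f} \geq 0$; combined with $f_a - f = -a A_a f$, this yields $\norm{A_a f}_H^2 \leq \ang{\partial^0 E(f), A_a f} \leq \norm{\partial^0 E(f)}_H \norm{A_a f}_H$, hence $\norm{A_a f}_H \leq \norm{\partial^0 E(f)}_H$ uniformly in $a$. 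In particular $\set{A_a f}_{a > 0}$ is bounded in $H = L^2(\Tn)$, and consequently $f_a - f = -a A_a f \to 0$ strongly as $a \to 0$.

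The next step is to pass to a weak limit. By boundedness, along any sequence $a_k \to 0$ there is a subsequence with $A_{a_k} f \toweak w$ weakly in $L^2(\Tn)$ for some $w$. Since $A_{a_k} f \in \partial E(f_{a_k})$, $f_{a_k} \to f$ strongly, and $\partial E$ is a maximal monotone operator, its graph is strongly-weakly closed in $H \times H$; therefore $w \in \partial E(f)$. Lower semicontinuity of the norm under weak convergence gives $\norm{w}_H \leq \liminf_k \norm{A_{a_k} f}_H \leq \norm{\partial^0 E(f)}_H$, and since $\partial^0 E(f)$ is by definition the unique element of $\partial E(f)$ of minimal norm, we conclude $w = \partial^0 E(f)$. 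As the limit is the same along every subsequence, the whole family converges weakly: $A_a f \toweak \partial^0 E(f)$. Finally, the norm bound $\limsup_{a\to 0}\norm{A_a f}_H \leq \norm{\partial^0 E(f)}_H = \norm{w}_H$ upgrades weak convergence to strong convergence in the Hilbert space $L^2(\Tn)$, which is the claim (recalling that $-\partial^0 E(f)$ is the quantity named in the statement, so $A_a f = (f - f_a)/a \to \partial^0 E(f)$, i.e.\ $(f_a - f)/a \to -\partial^0 E(f)$).

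The only genuinely delicate point is the strong–weak closedness of the graph of $\partial E$; this is precisely maximal monotonicity of $\partial E$, which holds because $E$ is convex, proper, and lower semicontinuous on $H$, as already noted in the excerpt. Everything else is the routine machinery of Yosida approximations, so I expect no real obstacle beyond citing the right abstract results.
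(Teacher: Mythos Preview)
Your argument is correct and is precisely the standard Yosida-approximation proof: the bound $\norm{A_a f}_H \le \norm{\partial^0 E(f)}_H$, weak compactness, demi-closedness of the graph of a maximal monotone operator, identification of the limit via minimality of the norm, and the upgrade from weak to strong convergence are all in order. The paper does not give an independent proof but simply cites \cite[Theorem~3.56]{Attouch} and \cite[\S9.6.1, Theorem~2]{Evans}, whose proofs are exactly the argument you wrote out; so your approach is the same as the one the paper defers to, only made explicit.
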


\begin{proof}
See \cite[Theorem~3.56]{Attouch} (also \cite[\S9.6.1, Theorem~2]{Evans}).
\qedhere\end{proof}

\subsection{Nonlocal mean curvature of a facet}

To properly capture the evolution of ``non-convex'' facets,
we need to investigate a more general profile
than the one considered in \eqref{2.8}.
A pair of open sets $(\Omega_-, \Omega_+)$ is all that is needed to
describe a general facet $D = \cl\Omega_- \setminus \Omega_+$.
Indeed, it will be shown below
(Proposition~\ref{pr:support-function}) that the quantity
$\divo (\nabla \psi / \abs{\nabla \psi})$
on the facet $D$ of $\psi$
depends only on the sign of $\psi$ outside of the facet $D$.
This motivates the following two definitions.

\begin{definition}
\label{def:smooth-pair}
Let $\Omega_-, \Omega_+ \subset \Tn$ be a pair of open sets
such that $\Omega_+ \subset \Omega_-$
and $\Omega_- \setminus \cl \Omega_+ \neq \emptyset$.
We say that $(\Omega_-, \Omega_+)$ is a \emph{smooth pair}
if
$\Omega_\pm$ have $C^2$ (possibly empty) boundaries
and $\dist(\partial \Omega_-, \partial \Omega_+) > 0$.
This includes the cases when $\Omega_+ = \emptyset$
or $\Omega_- = \Tn$, i.e.,
when one or both of $\partial \Omega_\pm$ is empty
and
the distance from an empty set is $+\infty$ by definition.
\end{definition}

\begin{definition}
\label{def:support-function}
Let $\Omega_\pm$ be a pair of open sets,
$\Omega_+ \subset \Omega_- \subset \Tn$,
and
$\psi \in \Lip(\Tn)$.
We say that $\psi$ is a \emph{support function of $(\Omega_-, \Omega_+)$}
if
\begin{align}
\label{facet-function}
    \psi(x) \quad
    \begin{cases}
    > 0 & \text{in $\Omega_+$},\\
    = 0 & \text{on $D := \cl\Omega_- \setminus \Omega_+$},\\
    < 0 & \text{in $\cl\Omega_-^c$},
    \end{cases}
\end{align}
where $\cl\Omega_-^c := \Tn \setminus \cl\Omega_-$
denotes the complement of the closure of $\Omega_-$.
\end{definition}

\begin{remark}
\label{re:support-function-symmetry}
It is easy to see that if $\psi$ is a support function of
a smooth pair $(\Omega_-, \Omega_+)$
then $-\psi$ is a support function of the smooth pair
$(\cl\Omega_+^c, \cl\Omega_-^c)$.
\end{remark}

For an open set $U \subset \Tn$
we define \cite{Anz}
\begin{align*}
X_2(U) := \set{z \in L^\infty(U; \Rn) : \divo z \in L^2(U)}.
\end{align*} 
If $\partial U$ is Lipschitz continuous, the normal trace $z\cdot\nu_{\partial U}$ is well-defined
in the sense of $H^{-1/2}(\partial\Omega)$ for every $z \in X_2(U)$,
see e.g. \cite{FM}.
However, here we rather use an equivalent definition based
on Green's formula.
The following version of Green's theorem for vector fields in $X_2(U)$
was proven in \cite{Anz}
for
bounded open sets $U \subset \Rn$,
but the modification for $\Tn$ is straightforward.

\begin{theorem}
\label{th:green}
Let $U \subset \Tn$ be an open set
with Lipschitz boundary and let $z \in X_2(U)$.
Then $z \cdot \nu_{\partial U} \in L^\infty(\partial U)$
such that $\norm{z \cdot \nu_{\partial U}}_{L^\infty(\partial U)} \leq \norm{z}_{L^\infty(U)}$, and
\begin{align*}
\int_U u \divo z + \int_U z \cdot \nabla u
= \int_{\partial U} u(z \cdot \nu_{\partial U}) \diff{\mathcal{H}^{n-1}}
\qquad \text{for all } u \in \Lip(U).
\end{align*}
\end{theorem}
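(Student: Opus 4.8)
The plan is to reduce the statement to Anzellotti's Euclidean version of Green's theorem \cite{Anz} by a standard localization: since $\Tn$ is flat and compact, one may work in finitely many isometric charts and glue with a smooth partition of unity, and no new estimate is required. Concretely, for every $p \in \Tn$ there is a ball $V_p = B_r(p)$ (one can take $r = 1/4$ uniformly) such that the projection $\pi : \Rn \to \Tn$ admits an isometric lift $\iota_p : V_p \to \Rn$ with $\pi \circ \iota_p = \mathrm{id}$. As $\cl U \subset \Tn$ is closed, hence compact, I would choose a finite subcover $V_1, \dots, V_N$ of $\cl U$ and a $C^\infty$ partition of unity $\rho_1, \dots, \rho_N$ with $\rho_j \in C_c^\infty(V_j)$, $0 \le \rho_j \le 1$, and $\sum_j \rho_j \equiv 1$ on a neighborhood of $\cl U$ (so in particular on $U$ and on $\partial U$, and $\sum_j \nabla(\rho_j u) = \nabla u$ on $U$ for any Lipschitz $u$). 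If $\partial U = \emptyset$ the asserted identity is merely integration by parts on the closed manifold $\Tn$ and is subsumed by the computation below with all boundary terms vanishing, so I assume $\partial U \neq \emptyset$.

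Next I transport the data: for each $j$ set $\tilde V_j := \iota_j(V_j)$, $\tilde U_j := \iota_j(U \cap V_j)$, $\tilde z_j := z \circ \iota_j^{-1}$. Since $\iota_j$ is an isometry, $\partial \tilde U_j$ is Lipschitz inside $\tilde V_j$, $\tilde z_j \in L^\infty(\tilde U_j;\Rn)$ with $\divo \tilde z_j = (\divo z)\circ\iota_j^{-1} \in L^2(\tilde U_j)$, and $\norm{\tilde z_j}_{L^\infty(\tilde U_j)} \leq \norm{z}_{L^\infty(U)}$. Because $\supp \rho_j$ is compact in $V_j$, I would replace $\tilde U_j$ by a bounded open set $W_j \subset \tilde U_j$ with Lipschitz boundary that coincides with $\tilde U_j$ on a neighborhood $N_j$ of $\iota_j(\supp \rho_j)$; this capping-off outside $\iota_j(\supp\rho_j)$ is the only place requiring a small construction, and it is routine since $\partial \tilde U_j$ is locally a Lipschitz graph. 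Anzellotti's theorem on $\Rn$ applied to $W_j$ and $\tilde z_j|_{W_j}$ then furnishes a normal trace $\tilde z_j \cdot \nu_{\partial W_j} \in L^\infty(\partial W_j)$ with $\norm{\tilde z_j\cdot\nu_{\partial W_j}}_{L^\infty(\partial W_j)} \leq \norm{z}_{L^\infty(U)}$, together with Green's formula on $W_j$ for every Lipschitz test function. Since the normal trace is a local object, its restriction to $\partial W_j \cap N_j = \iota_j(\partial U \cap \iota_j^{-1}(N_j))$ does not depend on the auxiliary choice of $W_j$, and on overlaps $V_i \cap V_j$ the lifts $\iota_i$ and $\iota_j$ differ by a lattice translation, so the corresponding local traces agree; transporting back by the $\iota_j^{-1}$ and using that the sets $\set{\rho_j > 0}$ cover $\partial U$, these glue to a single $z\cdot\nu_{\partial U} \in L^\infty(\partial U)$ with $\norm{z\cdot\nu_{\partial U}}_{L^\infty(\partial U)} \leq \norm{z}_{L^\infty(U)}$. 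Uniqueness of such a trace is automatic, since Green's formula determines its pairing with every Lipschitz function on $\partial U$ — obtained by restricting a Lipschitz extension to $U$ — and these are dense in $L^1(\partial U)$.

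Finally I would assemble the identity. Given $u \in \Lip(U)$, each $\rho_j u$ is Lipschitz on $U$ with support in $U \cap V_j$, and pushing the Euclidean Green formula on $W_j$ for the pullback of $\rho_j u$ (whose support lies in $N_j$, where $W_j = \tilde U_j$) forward by $\iota_j$ yields
\begin{align*}
\int_U (\rho_j u)\,\divo z + \int_U z\cdot\nabla(\rho_j u)
  = \int_{\partial U} (\rho_j u)\,(z\cdot\nu_{\partial U})\diff{\mathcal{H}^{n-1}},
\end{align*}
all integrals being genuinely localized to $\supp\rho_j$. Summing over $j$ and invoking $\sum_j \rho_j \equiv 1$ near $\cl U$ collapses the left-hand side to $\int_U u\,\divo z + \int_U z\cdot\nabla u$ and the right-hand side to $\int_{\partial U} u\,(z\cdot\nu_{\partial U})\diff{\mathcal{H}^{n-1}}$, which is the claim. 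The one step I expect to need care is the chart-by-chart reconciliation in the second paragraph — the construction of the bounded Lipschitz domains $W_j$ and the verification that the local normal traces are mutually consistent — but even that is standard; the substance of the theorem, the $L^\infty$ estimate and the integration-by-parts identity, is inherited verbatim from Anzellotti's Euclidean result.
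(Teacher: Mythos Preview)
Your proposal is correct and matches the paper's approach exactly: the paper does not give a proof at all, but simply cites Anzellotti's Euclidean result \cite{Anz} and remarks that ``the modification for $\Tn$ is straightforward.'' Your chart-and-partition-of-unity reduction is precisely that straightforward modification, carried out in detail.
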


\begin{definition}
Let $(\Omega_-, \Omega_+)$ be a smooth pair.
We say that a vector field
$z \in X_2(\interior D)$ for $D = \cl \Omega_- \setminus \Omega_+$
is a \emph{Cahn-Hoffman vector field in $D$}
if $z$ satisfies
\begin{equation}
\abs{z} \leq 1 \quad \text{a.e. in } D \label{2.9}
\end{equation}
as well as the boundary condition
\begin{equation}
z\cdot\nu_{\partial\Omega_-}=-1\quad\text{on}\quad\partial\Omega_-,
\qquad
z\cdot(-\nu_{\partial\Omega_+})=1\quad\textup{on}\quad\partial\Omega_+, \label{2.10}
\end{equation}
a.e. with respect to $\mathcal{H}^{n-1}$.
\end{definition}

The following proposition shows that the boundary condition
\eqref{2.10} is natural
and indeed holds for any vector field $z \in X_2(\Tn)$
that satisfies \eqref{2.6}
for a given $\psi \in \Lip(\Tn)$
with a sufficiently smooth level-set.

\begin{proposition}
\label{pr:trace}
Suppose that $u \in \Lip(\Tn)$
and define $G_t := \set{x \in \Tn: u(x) > t}$.
Suppose that $\partial G_0 \in C^1$
and that there exists $z \in X_2(\Tn)$
such that $z \in \partial W(\nabla u)$ a.e. on $\Tn$.
Then $z\cdot\nu_{\partial G_0} = -1$
a.e. on $\partial G_0$ with respect to $\mathcal{H}^{n-1}$.
\end{proposition}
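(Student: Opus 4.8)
The plan is to compare the flux $z$ against the constant field $-1$ in the exterior normal direction along $\partial G_0$, exploiting that $u$ attains a local extremum on $\partial G_0$ relative to the superlevel set. Concretely, since $z \in \partial W(\nabla u)$ a.e., we have $|z| \le 1$ everywhere and $z = \nabla u/|\nabla u|$ wherever $\nabla u \neq 0$; by Green's theorem (Theorem~\ref{th:green}) the normal trace $z\cdot\nu_{\partial G_0}$ is a well-defined $L^\infty$ function on $\partial G_0$ with $\norm{z\cdot\nu_{\partial G_0}}_\infty \le 1$. So the only thing to rule out is that $z\cdot\nu_{\partial G_0} > -1$ on a set of positive $\mathcal H^{n-1}$-measure; equivalently, it suffices to show $\int_{\partial G_0} z\cdot\nu_{\partial G_0}\diff{\mathcal H^{n-1}} \le -\mathcal H^{n-1}(\partial G_0)$.

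First I would set up the right test function. Since $\partial G_0 \in C^1$ and $u$ is Lipschitz, I would fix a small $\delta > 0$ and work in the collar $N_\delta := \set{x : \dist(x,\partial G_0) < \delta}$, using the signed distance function $d(x)$ to $\partial G_0$ (positive inside $G_0$), which is Lipschitz and, near $\partial G_0$, satisfies $\nabla d = \nu_{\partial G_0}$ on $\partial G_0$. On the inner half-collar $U_\delta := N_\delta \cap G_0$, apply Green's theorem (Theorem~\ref{th:green}) to $z$ and the test function $\varphi := \min(d,\delta) \ge 0$, which is Lipschitz, vanishes on $\partial G_0$ and equals $\delta$ on the inner boundary of the collar. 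This gives
\begin{align*}
\int_{U_\delta} \varphi\, \divo z + \int_{U_\delta} z\cdot\nabla\varphi
= \delta \int_{\set{d = \delta}} z\cdot\nu \diff{\mathcal H^{n-1}},
\end{align*}
where $\nu$ is the outer normal of $U_\delta$. The term $\int \varphi\,\divo z$ is $O(\delta)$ since $\divo z \in L^2$ and $|U_\delta| \to 0$; the right-hand side is $O(\delta)$ since $|z| \le 1$. A symmetric computation on the outer half-collar $U_\delta' := N_\delta \setminus \cl G_0$ with test function $\varphi' := \min(-d,\delta) \ge 0$ yields a matching identity. The key point is to extract the sign from $\int z\cdot\nabla\varphi$.

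The main obstacle — and the heart of the argument — is showing $\int_{\partial G_0} z\cdot\nu_{\partial G_0} \le -\mathcal H^{n-1}(\partial G_0)$ using the structure $z\in\partial W(\nabla u)$. The idea: on the part of $\partial G_0$ where $\nabla u \ne 0$ in a neighborhood, $u$ decreases across $\partial G_0$ in the direction $\nu_{\partial G_0}$ (since $G_0$ is a superlevel set with $C^1$ boundary, $\nabla u$ is parallel to $-\nu_{\partial G_0}$ there where it is nonzero and $\partial G_0$ is a genuine level set), so $z = \nabla u/|\nabla u| = -\nu_{\partial G_0}$ and the trace is exactly $-1$. On the remaining part, where $\nabla u = 0$ on $\partial G_0$, one cannot argue pointwise; instead I would use the test-function identity with $\varphi = \min(d,\delta)$: rewriting $\int_{U_\delta} z\cdot\nabla\varphi = \int_{U_\delta \cap \set{d<\delta}} z\cdot\nabla d$, and on $\set{0 < d < \delta}$ we have $\nabla\varphi = \nabla d$, while $z\cdot\nabla d \le |z||\nabla d| = |z| \le 1$; but we want a lower bound that forces the boundary trace to be $-1$. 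The clean route is: by the trace characterization, $z\cdot\nu_{\partial G_0}(x_0) = \lim_{\delta\to 0}\frac{1}{|U_\delta|}\int_{U_\delta} (z\cdot\nabla d)$ localized near $x_0$ (a Lebesgue-point / Green's-formula argument), and since on $G_0$ near $\partial G_0$ the function $u - 0$ is nonnegative with $\varphi = d$ comparable to it when $\nabla u\ne0$... Rather than this, I would instead invoke the argument already used heuristically in the text around \eqref{2.8}: since $u \le 0$ on $\partial G_0$ is the boundary value and $u > 0$ just inside, $z$ must "saturate" at $-1$ on $\partial G_0$. Precisely, take $h = -(u - \varepsilon)_+$ supported near $\partial G_0$... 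I expect the cleanest proof uses the fact that $z \in \partial W(\nabla u)$ a.e. implies, via Theorem~\ref{th:green} applied to $u$ itself on $G_0 \cap N_\delta$,
\begin{align*}
\int_{\partial G_0} 0\cdot(z\cdot\nu)\diff{\mathcal H^{n-1}} + \int_{\set{d=\delta}} u(z\cdot\nu)\diff{\mathcal H^{n-1}} = \int_{U_\delta} u\,\divo z + \int_{U_\delta} z\cdot\nabla u,
\end{align*}
so the $\partial G_0$ term drops; this alone is not enough, so one must combine it with the energy identity $\int z\cdot\nabla u = \int|\nabla u|$ (from \eqref{2.6}) restricted to the collar. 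I would match $\int_{U_\delta} |\nabla u|$ against $\int_{\set{d=\delta}} u \diff{\mathcal H^{n-1}}$ by the coarea formula / fundamental theorem of calculus along normal lines to $\partial G_0$ (valid since $\partial G_0 \in C^1$), concluding that the defect $\int_{\partial G_0}(1 + z\cdot\nu)\diff{\mathcal H^{n-1}}$, which is nonnegative since $z\cdot\nu \ge -1$, must vanish. Hence $z\cdot\nu_{\partial G_0} = -1$ $\mathcal H^{n-1}$-a.e. The delicate points are justifying the collar coordinates for merely $C^1$ boundary (Lipschitz signed distance suffices for Green's theorem) and the limit $\delta\to 0$ controlling the $L^2$ term $\int_{U_\delta}u\,\divo z = O(\delta\|\divo z\|_{L^2})$.
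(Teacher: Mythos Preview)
Your reduction is correct: since $\norm{z\cdot\nu_{\partial G_0}}_\infty\le 1$, it suffices to show
\[
\int_{\partial G_0} z\cdot\nu_{\partial G_0}\diff{\mathcal H^{n-1}}
\;\le\;-\mathcal H^{n-1}(\partial G_0),
\quad\text{equivalently}\quad
\int_{G_0}\divo z\;\le\;-\mathcal H^{n-1}(\partial G_0).
\]
But your collar argument does not close. Both of the test functions you try, $\varphi=\min(d,\delta)$ and $\varphi=u$, \emph{vanish on $\partial G_0$}, so in Green's formula on $U_\delta$ the boundary term $\int_{\partial G_0}\varphi\,(z\cdot\nu)$ drops out entirely and you never see the trace you are after. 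What survives is information about $z\cdot\nu$ on the inner surface $\{d=\delta\}$, and your ``matching via coarea along normals'' only yields $\int_{U_\delta}|\nabla u|\ge\int_{\{d=\delta\}}u$ (up to Jacobian), which combined with the Green identity gives an inequality for $\int_{\{d=\delta\}}u\,(z\cdot\nu_\delta-1)$ with a weight $u=O(\delta)$ that may vanish on large portions of $\{d=\delta\}$. You cannot pass from this to a statement on $\partial G_0$ without already knowing $z\cdot\nu$ on nearby level surfaces, which is circular. In short: the collar localization throws away exactly the boundary term you need.

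The missing idea, which the paper uses, is to work \emph{globally} and bring in the coarea formula for $u$ rather than for the signed distance. From $z\in\partial W(\nabla u)$ a.e.\ one has $z\cdot\nabla u=|\nabla u|$ a.e.\ on $\Tn$. Integrating the truncation $u_k=\min(\max(u,0),k)$ against $\divo z$ on all of $\Tn$ and applying Green's theorem gives $-\int_{\Tn} u_k\,\divo z=\int_{\Tn}|\nabla u_k|$; the right side is $\int_0^k\operatorname{Per}(G_t)\,dt$ by coarea, and the left side is $\int_0^k\!\int_{G_t}\divo z\,dt$ by Fubini. Differentiating in $k$ (i.e.\ equating integrands) yields
\[
\int_{G_t}\divo z=-\operatorname{Per}(G_t)\qquad\text{for a.e.\ }t>0.
\]
Now send $t\searrow 0$: dominated convergence handles the left side, and lower semicontinuity of perimeter (together with $\chi_{G_t}\to\chi_{G_0}$ in $L^1$ and $\partial G_0\in C^1$ so $\operatorname{Per}(G_0)=\mathcal H^{n-1}(\partial G_0)$) gives $\int_{G_0}\divo z\le-\mathcal H^{n-1}(\partial G_0)$, which is exactly the inequality you wanted. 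Your coarea instinct was right, but it has to be applied to the level sets of $u$ itself on the whole torus, not to a tubular foliation near $\partial G_0$.
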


\begin{proof}
Since $\norm{z}_\infty \leq 1$,
Theorem~\ref{th:green}
implies that $z\cdot\nu_{\partial G_0} \geq -1$
a.e. with respect to $\mathcal{H}^{n-1}$
and
\begin{align}
\label{trivial}
\int_{G_0} \divo z = \int_{\partial G_0} z\cdot\nu_{\partial G_0} \diff{\mathcal{H}^{n-1}}
\geq -\mathcal{H}^{n-1}(\partial G_0).
\end{align}
Hence it is enough to prove 
\begin{align}
\label{keyineq}
\int_{G_0} \divo z \leq
- \mathcal{H}^{n-1}(\partial G_0).
\end{align}
We define the cutoffs
\[
u_k = \min(\max(u,0), k), \qquad k \geq 0.
\]
Clearly
\[
\nabla u_k = \begin{cases}
\nabla u(x) & \text{a.e. }x, 0 < u(x) < k,\\
0 & \text{a.e. otherwise}.
\end{cases}
\]
In particular, $z \in \partial W(\nabla u_k)$ a.e. and
therefore
\[
\int_\Tn z \cdot \nabla u_k = \int_\Tn \abs{\nabla u_k}.
\]
Theorem~\ref{th:green} on the left and coarea formula for BV function $u_k$ on the right
imply
\begin{align}
\label{coarea}
-\int_\Tn u_k \divo z =
\int_\Tn z \cdot \nabla u_k = \int_\Tn \abs{\nabla u_k}
 = \int_0^k \int_\Tn \abs{\nabla \chi_{G_t}} \dx \diff t,
\end{align}
where $\chi_{G_t}$ is the characteristic function of $G_t$.
Using Fubini's theorem, we rewrite the first term as
\begin{align}
\label{fubini}
\begin{aligned}
\int_\Tn u_k(x) \divo z(x) \dx &=  \int_\Tn \int_0^{u_k(x)} \divo z(x) \diff{t} \dx\\
&= \int_0^k \int_{G_t} \divo z(x) \dx \diff{t}.
\end{aligned}
\end{align}
Since \eqref{coarea} and \eqref{fubini} hold for all $k \geq 0$,
we conclude that
\begin{align}
\label{div-per}
\int_{G_t} \divo z
= - \int_\Tn \abs{\nabla \chi_{G_t}}
\qquad \text{a.e. } t \geq 0.
\end{align}

Clearly $G_t \nearrow G_0$ as $t \searrow 0$
and therefore the dominated convergence theorem implies
\[
\int_{G_0} \divo z = \lim_{t\to0+} \int_{G_t} \divo z,
\]
and $\chi_{G_t} \to \chi_{G_0}$ in $L^1(\Tn)$.
Let $t_m$ be a sequence of $t$ for which \eqref{div-per} holds
and $t_m \to 0$ as $m \to \infty$.
The lower semi-continuity of total variation implies
\[
\int_{G_0} \divo z= \lim_{m\to\infty} \int_{G_{t_m}} \divo z
    = - \lim_{m\to\infty} \int_\Tn \abs{\nabla \chi_{G_{t_m}}}
    \leq - \int_{G_0} \abs{\nabla \chi_{G_0}}
    = - \mathcal{H}^{n-1} (\partial G_0),
\]
where the last equality holds since $\partial G_0 \in C^1$.
Thus we have established \eqref{keyineq} and the lemma follows.
\qedhere
\end{proof}

We consider the variational problem
\begin{equation}
 \label{vp-facet}
\min\left\{\int_{D}|w|^{2}\textup{d}x\biggm|w=\textup{div}\,z\textup{ and }z\textup{ is a Cahn-Hoffman vector field in }D\,\right\}.
\end{equation}
Of course, (\ref{2.8}) corresponds to the case when $\Omega_+$
is an empty set.
Since (\ref{2.9}) and (\ref{2.10}) are fulfilled
for convex combinations of Cahn-Hoffman vector fields as well,
the problem \eqref{vp-facet} is a convex (obstacle) problem.
Moreover, it is lower semi-continuous in $L^{2}(D)$ with respect to $w$
(the set of Cahn-Hoffman vector fields is closed in $L^2(D)$).
Thus there exists at least one minimizer $w_0$ provided that
there is a Cahn-Hoffman vector field.
Fortunately, if the pair $(\Omega_-, \Omega_+)$ representing the facet
is smooth, such a vector field exists.
Let us recall that $w \in -\partial E(\psi)$ for $\psi \in \Lip(\Tn)$
if and only if there exists $z \in X_2(\Tn)$ such that
\begin{align}
 \label{lip-subdiff}
 \divo z = w \qquad \text{and} \qquad 
z \in \partial W(\nabla \psi) \qquad \text{a.e. on } \Tn.
\end{align} 

\begin{proposition}
\label{pr:support-function}
Let $(\Omega_-, \Omega_+)$ be a smooth pair.
Then there exists a support function $\psi$ of $(\Omega_-, \Omega_+)$
in the sense of Definition~\ref{def:support-function}
such that
$\psi \in \mathcal{D}(\partial E)$,
and a Cahn-Hoffman vector field in
$D = \cl \Omega_- \setminus \Omega_+$.
Consequently,
there exists $w_0$, the unique solution of the variational problem
\eqref{vp-facet}, and
$w_0(x) = - \partial^0 E(\psi)(x)$ a.e. in $D$.
In fact, the value of $-\partial^0 E(\psi)$ on $D$
is independent of the choice of a support function $\psi$
of $(\Omega_-,\Omega_+)$ such that
$\psi \in \mathcal{D}(\partial E)$.
\end{proposition}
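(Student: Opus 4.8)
The plan is to build an explicit support function $\psi$ by gluing together profiles on the three regions $\Omega_+$, $D = \cl\Omega_-\setminus\Omega_+$, and $\cl\Omega_-^c$, and then to verify that it lies in $\mathcal{D}(\partial E)$ by producing an admissible vector field $z \in X_2(\Tn)$ with $z \in \partial W(\nabla\psi)$ a.e. First, using that $(\Omega_-,\Omega_+)$ is a smooth pair with $\dist(\partial\Omega_-,\partial\Omega_+) > 0$, I would take $\psi$ to be (a smoothed version of) the signed distance function to $\partial\Omega_-$ near $\partial\Omega_-$, to $-\partial\Omega_+$ near $\partial\Omega_+$, truncated and flattened to $0$ on $D$ and to suitable constants far from the facet; since both boundaries are $C^2$, the signed distance is $C^2$ in a tubular neighborhood, so we may arrange $\nabla\psi \neq 0$ in a neighborhood of $\partial\Omega_-\cup\partial\Omega_+$ with $\nabla\psi/\abs{\nabla\psi}$ extending smoothly (as the normal field) across these boundaries, and $\psi \in \Lip(\Tn)$. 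On $\Omega_+$ and on $\cl\Omega_-^c$, away from the facet, we need $\psi$ strictly signed; we can do this keeping $\psi \in C^2$ with bounded Hessian, so that $\divo(\nabla\psi/\abs{\nabla\psi}) \in L^\infty \subset L^2$ there, establishing $\psi \in \mathcal{D}(\partial E)$ on those regions. This gives the candidate support function.

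Next I would construct a Cahn--Hoffman vector field in $D$. By the definition, we need $z \in X_2(\interior D)$ with $\abs z \le 1$ a.e.\ and the boundary conditions $z\cdot\nu_{\partial\Omega_-} = -1$ on $\partial\Omega_-$ and $z\cdot\nu_{\partial\Omega_+} = -1$ on $\partial\Omega_+$ (with the sign as in \eqref{2.10}). The simplest choice is to interpolate the two outer normal vector fields: extend $\nu_{\partial\Omega_-}$ and $-\nu_{\partial\Omega_+}$ into $D$ along the normal lines (again valid in a tube because $\partial\Omega_\pm \in C^2$ and the tubes are disjoint by the distance condition), and patch them with a partition of unity $\chi_- + \chi_+ + \chi_0 = 1$ subordinate to a covering of $\cl D$; on the overlap region one interpolates so as to keep $\abs z \le 1$ — here one should be slightly careful, since a naive convex combination of two unit vectors that are not aligned has norm $< 1$, which is fine, but one must check the transition is Lipschitz so $\divo z \in L^\infty(\interior D) \subset L^2$. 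This produces a Cahn--Hoffman vector field, hence by the discussion preceding the proposition (lower semicontinuity plus convexity of \eqref{vp-facet} and closedness of the admissible set in $L^2(D)$) a unique minimizer $w_0 = \divo z_0$ exists.

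The remaining and main point is the identification $w_0 = -\partial^0 E(\psi)$ a.e.\ in $D$ and the independence statement. For this I would combine the two vector fields: let $z$ agree with $\nabla\psi/\abs{\nabla\psi}$ where $\nabla\psi \neq 0$ (i.e.\ outside $D$ and in a neighborhood of $\partial D$) and with $z_0$ inside $\interior D$; the boundary conditions \eqref{2.10} are precisely what make this global field lie in $X_2(\Tn)$ with matching normal traces across $\partial\Omega_\pm$ — this is where Theorem~\ref{th:green} and Proposition~\ref{pr:trace} enter, guaranteeing that any such global $z$ must satisfy $z\cdot\nu_{\partial\Omega_-} = -1$, so the boundary data is forced, not merely available. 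With $z \in X_2(\Tn)$ and $z \in \partial W(\nabla\psi)$ a.e., \eqref{lip-subdiff} gives $\divo z \in -\partial E(\psi)$, so $\psi \in \mathcal D(\partial E)$, and on $D$ we have $\divo z = w_0$. Then the minimality of $\partial^0 E(\psi)$ together with minimality of $w_0$ in \eqref{vp-facet} — restricted to $D$, any element of $-\partial E(\psi)$ yields an admissible competitor for \eqref{vp-facet} via its vector field, and conversely — pins down $-\partial^0 E(\psi)|_D = w_0$. The independence of $-\partial^0 E(\psi)|_D$ from the choice of support function follows because any two such $\psi_1,\psi_2$ generate, through their subdifferential vector fields restricted to $D$, the same convex obstacle problem \eqref{vp-facet}, whose minimizer $\divo z$ is unique; one checks that the restriction to $D$ of any $w \in -\partial E(\psi_i)$ is admissible for \eqref{vp-facet} using that $\psi_i = 0$ on $D$ forces $z \in \cl B_1(0)$ there and the normal traces at $\partial\Omega_\pm$ are $\mp 1$ by Proposition~\ref{pr:trace}. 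The main obstacle is the gluing/trace bookkeeping at $\partial\Omega_\pm$ — ensuring the patched field is genuinely in $X_2$ across the interface and that the $L^2$-minimality transfers correctly between the global subdifferential problem and the localized problem \eqref{vp-facet}.
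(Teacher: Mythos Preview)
Your proposal is correct and follows essentially the same route as the paper: build $\psi$ from truncated signed-distance profiles near $\partial\Omega_\pm$ (using that these are $C^2$ surfaces of positive reach with disjoint tubular neighborhoods), produce a Cahn--Hoffman field by extending the normals into $D$, and then identify $w_0$ with $-\partial^0 E(\psi)|_D$ by the two-way gluing argument (restrict $z_\psi$ to $D$ to get a competitor for \eqref{vp-facet}; glue $z_0$ on $D$ with $z_\psi$ on $D^c$ to get a competitor for $\partial^0 E(\psi)$), invoking Proposition~\ref{pr:trace} for the forced boundary values. The only cosmetic difference is that the paper writes down a single closed-form field $z = -\nabla[\theta(d_{\Omega_-}) + \theta(d_{\Omega_+})]$ with a smooth cutoff $\theta$, which is automatically $C^1$ on all of $\Tn$ and sidesteps your partition-of-unity patching; this also makes the verification $z = \nabla\psi/|\nabla\psi|$ where $\nabla\psi \neq 0$ immediate.
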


\begin{proof}
For a given nonempty set $E \subset \Tn$,
let us define the signed distance function
\begin{align*}
    d_E(x) = \dist(x, E) - \dist(x, E^c).
\end{align*}

Since $(\Omega_-, \Omega_+)$ is a smooth pair,
$\de_0 := \dist(\partial \Omega_-, \partial \Omega_+) > 0$.
Moreover,
the boundaries $\partial \Omega_\pm$ are compact $C^2$ surfaces
and therefore they are sets of \emph{positive reach}:
there exist positive constants $\de_-$, $\de_+$,
such that $d_{\Omega_\pm} \in C^2$ in
$\set{x : \dist(x, \partial \Omega_\pm) < \de_\pm}$;
see \cites{DZ11,KP}.

We set $\de := \frac13 \min (\de_-, \de_+, \de_0) > 0$.
Let $\ta \in C^\infty(\R)$ be a smooth nondecreasing function such that
\begin{align*}
0 &\leq \ta'(\si) \leq 1 & &\text{for all $\si \in \R$}, &
\ta(\si) &= 
\begin{cases}
\si &\text{for $\abs{\si} \leq \de/2$},\\
 \frac34\de \sign \si & \text{for $\abs\si \geq \de$}.
\end{cases}
\end{align*}

We also define $\zeta \in \Lip(\R)$,
\begin{align*}
\zeta(\si) := \max(\min(\si, \frac12\de), -\frac12\de) = 
\begin{cases}
\si & \text{for $\abs\si \leq \de/2$},\\
\frac12 \de \sign \si & \text{otherwise.} 
\end{cases}
\end{align*}

Finally, we can conclude the construction by defining
the support function $\psi \in \Lip(\Tn)$,
\begin{align*}
    \psi(x) = 
    \begin{cases}
    -\zeta(d_{\Omega_+}) & \text{in $\Omega_+$},\\
    0 & \text{in $D := \cl \Omega_- \setminus \Omega_+$},\\
    -\zeta(d_{\Omega_-}) & \text{in $\cl \Omega_-^c$},
    \end{cases}
\end{align*}
and the vector field
\begin{align*}
z(x) = -\nabla \bra{\ta(d_{\Omega_-}(x)) + \ta(d_{\Omega_+}(x))}
     = -\ta'(d_{\Omega_-}(x)) \nabla d_{\Omega_-}(x)
       - \ta'(d_{\Omega_+}(x)) \nabla d_{\Omega_+}(x).
\end{align*}
By construction, $z \in C^1(\Tn)$ and therefore $z \in X_2(\Tn)$.
Indeed,
note that
$\ta'(d_{\Omega_\pm}(x)) \neq 0$ only in the $\de$-neighborhood
$K_\pm = \set{\abs{d_{\Omega_\pm}} < \de}$,
and $d_{\Omega_\pm} \in C^2(K_\pm)$.
Moreover, as
$K_+ \cap K_- = \emptyset$, 
$\ta' \leq 1$
and $\abs{\nabla d_{\Omega_\pm}} \leq 1$ a.e.,
we have $\abs{z} \leq 1$ in $\Tn$.
This also shows that $z$ satisfies the boundary condition
\eqref{2.10} and thus $z$ is a Cahn-Hoffman field in $D$.
In particular,
the variational problem \eqref{vp-facet} has a unique solution
$w_0 \in L^2(D)$.

Finally, a simple calculation yields that $z = \nabla \psi$
and $\abs{z} = \abs{\nabla \psi} = 1$
whenever $\nabla \psi \neq 0$.
Hence \eqref{lip-subdiff} holds
and therefore $\psi \in \mathcal{D} (\partial E)$
and $\divo z \in -\partial E(\psi)$.

Let $w_0$ be the solution of the variational problem \eqref{vp-facet}.
There exists a Cahn-Hoffman vector field $z_0$ in $D$
such that $w_0 = \divo z_0$ in $D$.
Let $\psi$ be an arbitrary support function of $(\Omega_-, \Omega_+)$
such that $\psi \in \mathcal{D}(\partial E)$.
Again, there exists $z_\psi \in X_2(\Tn)$
which satisfies \eqref{lip-subdiff}
with $w = - \partial^0 E(\psi)$.
Clearly $z_\psi \in X_2(\interior D)$
and $z_\psi$ satisfies the boundary condition \eqref{2.10}
due to Proposition~\ref{pr:trace}
and it is therefore a Cahn-Hoffman vector field in $D$.
Moreover, the vector field
\[
\bar z(x) = \begin{cases}
z_0(x) & \text{on } D,\\
z_\psi(x) & \text{on } D^c
\end{cases}
\]
belongs to $X_2(\Tn)$,
and satisfies the inclusion in \eqref{lip-subdiff}.
Therefore $\norm{\divo z_\psi}_{L^2(D)} \leq \norm{\divo z_0}_{L^2(D)}$
since $\divo z_\psi = -\partial^0 E(\psi)$.
But the uniqueness of the variational problem yields
\[
-\partial^0 E(\psi) = \divo z_\psi = w_0 \qquad\text{a.e. on } D.
\]
This holds for an arbitrary support function
$\psi \in \mathcal{D}(\partial E)$.
\qedhere\end{proof}

\begin{remark}
\label{re:support-scal-invar}
It is easy to see that
if $\psi \in \mathcal{D}(\partial E)$ is a support function
of a smooth pair $(\Omega_-, \Omega_+)$
then,
for any strictly increasing
$\ta \in C^1((-\infty, 0]) \cap C^1([0, \infty))$,
$\ta \circ \psi \in \mathcal{D}(\partial E)$
with the same Cahn-Hoffman vector field $z_0$ as $\psi$
on $\Omega_- \setminus \cl \Omega_+$,
and $\ta \circ \psi$ is a support function of $(\Omega_-, \Omega_+)$.
This fact will be used multiple times
with the particular choice $\ta(s) = \al s_+ - \be s_-$
for some $\al, \be > 0$.
\end{remark}

We are now ready to define the nonlocal curvature
for a smooth pair $(\Omega_-,\Omega_+)$.

\begin{definition}
Let $(\Omega_-,\Omega_+)$ be a smooth pair.
Let $w_0$ be the unique minimizer of \eqref{vp-facet}.
Then we call $w_0$ the \textit{nonlocal curvature} of $(\Omega_-,\Omega_+)$,
and we denote it by $\Lambda(\Omega_-, \Omega_+)$.
\end{definition}
Although there is a huge literature on obstacle problems
(see e.g. \cite{Rod}),
for our obstacle problem \eqref{vp-facet}
there seems to be no literature when
$\Omega_+$ is non-empty except in the one-dimensional case
in which $w_0$ is always constant.
(If one end of $\Omega_-$ is not covered by $\cl{\Omega}_+$,
$w_0$ must be zero near that point.)
In general, it is hard to calculate $w_0$.
If there is no obstacle condition $|z|\leq 1$ in (\ref{2.9}),
the minimizer $w_0$ is always constant
since its Euler-Lagrange equation is $\nabla\textup{div}\,z_{0}=0$.
If the minimizer $w_0$ is a constant $\lambda$,
the value of $\lambda$
is easily computed by the Green's theorem,
Theorem~\ref{th:green}.
Indeed,
\begin{align*}
\mathcal{H}^{n}(D)\lambda
    &= \int_D \divo z_{0} \dx
    = \int_{\partial\Omega_-} z_0\cdot\nu_{\partial\Omega_-}
     \;d\mathcal{H}^{n-1}
    + \int_{\partial\Omega_+} z_0 \cdot(-\nu_{\partial\Omega_+})
     \;d\mathcal{H}^{n-1}\\
    &= -\mathcal{H}^{n-1}(\partial\Omega_{-})
        +\mathcal{H}^{n-1}(\partial\Omega_{+})
\end{align*}
and therefore
$$\lambda=\frac{1}{\mathcal{H}^{n}(D)}\left(\mathcal{H}^{n-1}(\partial\Omega_+)-\mathcal{H}^{n-1}(\partial\Omega_-)\right),$$
where $\mathcal{H}^n$ denotes the $n$-dimensional Hausdorff measure
so that $\mathcal{H}^n$ is the Lebesgue measure. 

The question is whether or not $w_0$ is constant on $D$.
When $\Omega_+$ is empty, this problem has been well-studied
by Bellettini et al. \cite{BNP01IFB}.
A facet where $w_0$ is constant is called \emph{calibrable}.
There is a necessary and sufficient condition for calibrability.
A sufficient condition for a convex set to be calibrable is that
the principal curvature of $\partial\Omega_-$ is bounded by
$\mathcal{H}^{n-1}(\partial\Omega_-)/\mathcal{H}^{n}(D)$.

In the case when $w_0$ is not constant, not much is known except regularity \cite{BNP01a,BNP01b}.
Among other results, they proved that the minimizer is in
$BV(\Omega_-)$ but may be discontinuous.
See also \cite[Theorem~5.16]{B10}.

Let us point out here that our $(\Omega_-,\Omega_+)$ pair formulation does not require that $D$ is connected so it is more general.

We conclude this subsection by illustrating
the definitions on a few examples
of simple smooth pairs.

\begin{example}
Let $\Omega_- = \Tn$ and $\Omega_+ = \emptyset$,
i.e., the facet $D = \cl \Omega_- \setminus \Omega_+$
is the whole torus. In this case $\Lambda(\Omega_-, \Omega_+) \equiv 0$
on $D = \Tn$ since $z \equiv 0$ 
is a Cahn-Hoffman vector field in $D$.
\end{example}

\begin{example}
\label{ex:ball-solution}
Let $\Omega_- = B_{R}(0)$, $R \in (0, 1/2)$, and let $\Omega_+ = \emptyset$. Then 
\begin{align*}
z(x) = \frac{-x}{R}
\end{align*}
is a Cahn-Hoffman vector field on $D$. Note that the minimizer is $w_0 \equiv \frac{-n}{R}$.
\end{example}

\begin{example}
Let $\Omega_{\pm}=B_{R_\pm}(0)$ with $0<R_+<R_- < 1/2$.
The minimizer $w_0$ of \eqref{vp-facet} for this pair $(\Omega_-,\Omega_+)$
is the constant
\[\Lambda(\Omega_-,\Omega_+)=\frac{1}{\mathcal{H}^n(D)}\left(\mathcal{H}^{n-1}(\partial\Omega_+)-\mathcal{H}^{n-1}(\partial\Omega_-)\right).\]
In particular,
the facet $D$ is calibrable even though $\Omega_+\neq\emptyset$.
\end{example}

\begin{proof}
We shall construct a Cahn-Hoffman vector field $z$ so that
$\divo z$ is a constant.
In fact, we take $z$ of the form $z = \nabla \vp$, where $\vp$ is a radially symmetric solution of the Poisson equation with Neumann boundary data. Let us denote
\begin{align*}
b := \Lambda(\Omega_-, \Omega_+) = n \frac{R_-^{n-1} - R_+^{n-1}}{R_-^n - R_+^n}.
\end{align*}
$\vp$ is a solution of
\begin{align*}
\begin{cases}
\Delta \vp = b & \text{in } D,\\
\pd{\vp}{\nu} = \pm 1& \text{on } \abs{x} = R_\pm. 
\end{cases}
\end{align*}
In fact, $\vp$ might be chosen in the form
\begin{align*}
\vp(x) = \frac{a}{2-n} \abs{x}^{2-n} + \frac{b}{2} \abs{x}^2.
\end{align*}
To satisfy the boundary condition, we take
\begin{align*}
a = (R_+R_-)^{n-1} \frac{R_- - R_+}{R_-^n - R_+^n} > 0.
\end{align*}
A simple computation yields that
\begin{align*}
z = \ta(\abs{x}) \frac{x}{\abs{x}},
\end{align*}
where $\ta(\abs{x}) = a \abs{x}^{1-n} + b \abs{x} > 0$. Since $\ta(\rho)$ is convex for $\rho > 0$ and $\ta(R_\pm) = 1$, we conclude that $\abs{z} \in (0, 1]$ in $D$. Therefore $z$ is a Cahn-Hoffman vector field.
\qedhere\end{proof}

\subsection{Monotonicity of nonlocal curvatures}

In this subsection we shall prove the monotonicity
of nonlocal curvature $\Lambda$
with respect to a smooth pair $(\Omega_-,\Omega_+)$.
Its precise form is given by the following theorem.

\begin{theorem}[Monotonicity]
\label{th:monotonicity}
Let $(\Omega^i_-, \Omega^i_+)$ be a smooth pair for $i=1,2$.
Let $\Lambda^i=\Lambda(\Omega^i_-,\Omega^i_+)$
be the nonlocal curvature
on $D^i=\cl{\Omega^i_-} \setminus \Omega^i_+$ for $i=1,2$.
If $\Omega^1_-\subset\Omega^2_-$ and $\Omega^1_+\subset\Omega^2_+$,
then $\Lambda^1\leq\Lambda^2$ a.e. in $D^1\cap D^2$.
\end{theorem}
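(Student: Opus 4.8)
The plan is to argue by the minimality characterization of $\Lambda^i$ from the variational problem \eqref{vp-facet} together with the fact, established in Proposition~\ref{pr:support-function}, that $\Lambda^i = -\partial^0 E(\psi_i)$ on $D^i$ for any support function $\psi_i \in \mathcal D(\partial E)$ of $(\Omega^i_-, \Omega^i_+)$. The real content is a monotonicity of the minimal section $\partial^0 E$ for ordered support functions, and the cleanest way to access it is through the resolvent operator: by Proposition~\ref{pr:resolvent-convergence}, $(\psi_{i,a} - \psi_i)/a \to -\partial^0 E(\psi_i)$ in $L^2(\Tn)$ as $a \to 0$, where $\psi_{i,a}$ solves $\psi_{i,a} + a\partial E(\psi_{i,a}) \ni \psi_i$. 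So if we can choose the two support functions so that $\psi_1 \le \psi_2$ on all of $\Tn$, Theorem~\ref{th:resolvent-comparison} gives $\psi_{1,a} \le \psi_{2,a}$, hence $(\psi_{1,a}-\psi_1)/a \le (\psi_{2,a}-\psi_2)/a + (\psi_2 - \psi_1)/a$; but the last term is annoying unless $\psi_1 = \psi_2$ on the overlap region where we want the conclusion. This suggests the right move: pick the two support functions to \emph{agree} on a neighborhood of $D^1 \cap D^2$ while still being globally ordered.

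\textbf{Key steps.}
First I would fix, via Proposition~\ref{pr:support-function}, support functions $\psi_i \in \mathcal D(\partial E)$ of $(\Omega^i_-, \Omega^i_+)$; by Remark~\ref{re:support-scal-invar} I may postcompose each with a strictly increasing $C^1$ map (separately on $(-\infty,0]$ and $[0,\infty)$) without changing $\partial^0 E(\psi_i)$ on $D^i$, and this rescaling freedom is what lets me arrange the global ordering. The inclusions $\Omega^1_- \subset \Omega^2_-$ and $\Omega^1_+ \subset \Omega^2_+$ mean: wherever $\psi_1 > 0$ (i.e.\ in $\Omega^1_+ \subset \Omega^2_+$) also $\psi_2 > 0$; wherever $\psi_1 = 0$ (on $D^1$) we have $\psi_2 \ge 0$; and wherever $\psi_2 < 0$ (on $\cl{\Omega^2_-}{}^c \subset \cl{\Omega^1_-}{}^c$) also $\psi_1 < 0$. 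Thus on the relevant sets the \emph{signs} are already ordered, and by scaling down $\psi_1$ on its positive part and scaling up its negative part (and symmetrically for $\psi_2$), using compactness of $\Tn$ and continuity, I can force $\psi_1 \le \psi_2$ pointwise on $\Tn$. Next, to kill the leftover $(\psi_2-\psi_1)/a$ term, I further modify $\psi_1$ and $\psi_2$ so that they are \emph{equal} on an open neighborhood $N$ of the compact overlap $\cl{(D^1 \cap D^2)}$: on $D^1 \cap D^2$ both are zero; just outside, I can interpolate, keeping both functions support functions of their respective pairs (their zero level sets $D^1, D^2$ are unchanged because $\dist(\partial\Omega^i_-, \partial\Omega^i_+) > 0$ gives room) and keeping $\psi_1 \le \psi_2$ globally. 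Then for $x \in N$, $(\psi_{1,a}(x) - \psi_1(x))/a \le (\psi_{2,a}(x) - \psi_2(x))/a$ for every $a > 0$, so passing $a \to 0$ along an $L^2$-convergent sequence (and a further a.e.-convergent subsequence) yields $-\partial^0 E(\psi_1) \le -\partial^0 E(\psi_2)$ a.e.\ on $N$, hence $\Lambda^1 = -\partial^0 E(\psi_1) \le -\partial^0 E(\psi_2) = \Lambda^2$ a.e.\ on $D^1 \cap D^2$.

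\textbf{Main obstacle.}
The delicate point is the simultaneous construction of $\psi_1$ and $\psi_2$ that are (a) valid support functions of their own smooth pairs in the sense of Definition~\ref{def:support-function}, (b) globally ordered $\psi_1 \le \psi_2$ on $\Tn$, and (c) identical on a neighborhood of $D^1 \cap D^2$. Condition (c) competes with (a) near points of $D^1 \cap D^2$ that lie on $\partial\Omega^1_+$ or $\partial\Omega^2_-$, where one function must change sign and the other must not; one must check that the inclusions $\Omega^1_\pm \subset \Omega^2_\pm$ keep these ``conflict zones'' away from $D^1 \cap D^2$, i.e.\ that $D^1 \cap D^2$ is contained in the open set $(\Omega^2_- \setminus \cl{\Omega^1_+})$ on which both functions can legitimately be taken to vanish — this is exactly where the positivity of $\dist(\partial\Omega^i_-,\partial\Omega^i_+)$ and the inclusions are used. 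An alternative, if this gluing proves too fiddly, is to keep the globally-ordered but not-equal pair and instead invoke the resolvent comparison more carefully with a localization argument, noting that $\partial^0 E(\psi_i)$ on $D^i$ depends only on the pair (Proposition~\ref{pr:support-function}), but the equal-on-a-neighborhood construction is the conceptually cleanest route and I expect it to work.
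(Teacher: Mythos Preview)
Your approach is essentially the paper's: pick ordered support functions, apply the resolvent comparison (Theorem~\ref{th:resolvent-comparison}), and pass to the limit via Proposition~\ref{pr:resolvent-convergence} along an a.e.-convergent subsequence. It is correct.

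That said, you are manufacturing difficulties. Your ``main obstacle'' evaporates once you notice that any support function of $(\Omega^i_-,\Omega^i_+)$ vanishes on $D^i$ by definition, so $\psi_1=\psi_2=0$ on $D^1\cap D^2$ is automatic; equality on a \emph{neighborhood} is not needed, since the inequality $f^1_a/a \le f^2_a/a$ already holds pointwise on $D^1\cap D^2$ and survives the a.e.\ limit. For the global ordering $\psi_1\le\psi_2$, the paper avoids your rescaling argument entirely: it just runs the explicit signed-distance construction of Proposition~\ref{pr:support-function} for both pairs with the \emph{same} cutoff parameter $\de$, and then the inclusions $\Omega^1_\pm\subset\Omega^2_\pm$ force $d_{\Omega^1_\pm}\ge d_{\Omega^2_\pm}$, hence $\psi^1\le\psi^2$ on $\Tn$ directly.
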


\begin{proof}
We apply the comparison principle for the resolvent problem
\eqref{resolvent-problem};
a similar idea was pursued in \cite{GGM},
where the evolution equation \eqref{2.4}
is used instead of the resolvent problem.

We have
\[\Lambda^i = -\partial^0 E (\psi^i)\quad\textup{a.e. in}\quad D^i,\]
where $\psi^i \in \dom(\partial E)$
are the support functions for $(\Omega^i_-, \Omega^i_+)$ constructed in
Proposition~\ref{pr:support-function}.
Observe that it is possible to take the same $\de > 0$
in the proof of Proposition~\ref{pr:support-function}
for both $\psi^1$ and $\psi^2$,
and then by construction
and due to the ordering $\Omega^1_\pm \subset \Omega^2_\pm$
we have
\begin{align}
\label{psi-order}
\psi^1 &\leq \psi^2 &&\text{in $\Tn$,}
&\psi^1 &= \psi^2 = 0 &&\text{in $D^1 \cap D^2$}.
\end{align}

For each $a > 0$,
we consider the solution $f^i_a$ of
the resolvent problem \eqref{resolvent-problem}
with right-hand side $\psi^i$.
Due to Proposition~\ref{pr:resolvent-convergence},
$(f^i_a - \psi^i)/a \to -\partial^0 E(\psi^i)$ in $L^2$ as $a \to 0$.
Therefore there is a subsequence $a_k \to 0$ as $k \to \infty$
such that
$(f^i_{a_k} - \psi^i)/a_k \to -\partial^0 E(\psi^i)$ a.e.
as $k \to \infty$ for $i = 1, 2$.

The comparison principle, Theorem~\ref{th:resolvent-comparison},
and \eqref{psi-order}
imply that $f^1_{a_k} \leq f^2_{a_k}$
and
$f^i_{a_k} - \psi^i = f^i_{a_k}$ in $D^1\cap D^2$
for all $k$.
Therefore
\begin{align*}
\Lambda^1 &= - \partial^0 E(\psi^1)
    = \lim_{k\to\infty} \frac{f^1_{a_k}}{a_k}
    \leq \lim_{k\to\infty} \frac{f^2_{a_k}}{a_k}
    = - \partial^0 E(\psi^2)
    = \Lambda^2
    &&\text{a.e. in $D^1 \cap D^2$}
\end{align*}
and the monotonicity of $\Lambda$ is established.
\qedhere\end{proof}

To illustrate the previous theorem,
we show some basic bounds on
$\Lambda(\Omega_-, \Omega_+)$.

\begin{proposition}
Let $(\Omega_-, \Omega_+)$ be a smooth pair.
Then 
\begin{align*}
\abs{\Lambda(\Omega_-, \Omega_+)(x)}
    \leq \frac{n}{\rho(x)} \quad \text{for a.e. } x \in D := 
        \cl\Omega_- \setminus \Omega_+,
\end{align*} 
where 
$\rho(x) :=
    \sup \set{\rho < 1/4 : x \in B_\rho(y) \subset D \text{ for some } y}$.
\end{proposition}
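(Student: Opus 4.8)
The plan is to combine the monotonicity theorem (Theorem~\ref{th:monotonicity}) with the explicit ball computation in Example~\ref{ex:ball-solution}. Fix a point $x \in D$ and a radius $\rho < 1/4$ together with a center $y$ such that $x \in B_\rho(y) \subset D$; such a configuration exists for $\rho$ arbitrarily close to $\rho(x)$ by definition of $\rho(x)$. The pair $(B_\rho(y), \emptyset)$ is a smooth pair, its facet is $\cl{B_\rho(y)}$, and by Example~\ref{ex:ball-solution} (after translating the center) its nonlocal curvature is the constant $-n/\rho$.

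To get the upper bound, I would compare the given smooth pair $(\Omega_-, \Omega_+)$ with the smooth pair $(\Omega_-, \Omega_+ \cup B_\rho(y))$. One checks that the latter is again a smooth pair: since $B_\rho(y) \subset D = \cl\Omega_- \setminus \Omega_+$, we have $\cl{B_\rho(y)} \subset \Omega_-$ and $\dist(\partial\Omega_-, \partial B_\rho(y)) > 0$, and similarly $\dist(\partial\Omega_+, \partial B_\rho(y)) > 0$, so after possibly shrinking $\rho$ slightly the enlarged inner set $\Omega_+ \cup B_\rho(y)$ has $C^2$ boundary at positive distance from $\partial\Omega_-$ — here one uses that $B_\rho(y)$ is strictly inside the open set $D$. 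Then $\Omega_- \subset \Omega_-$ and $\Omega_+ \subset \Omega_+ \cup B_\rho(y)$, so Theorem~\ref{th:monotonicity} gives $\Lambda(\Omega_-,\Omega_+) \leq \Lambda(\Omega_-, \Omega_+\cup B_\rho(y))$ a.e. on the intersection of the two facets. To pin down the right-hand side near $x$, I would shrink further: comparing $(B_\rho(y), \emptyset)$ with $(\Omega_-, \Omega_+ \cup B_\rho(y))$ is not directly of the required inclusion type, so instead I would compare $(\Omega_-, \Omega_+ \cup B_\rho(y))$ against a pair supported only on a neighborhood of $\partial B_\rho(y)$, or — cleaner — apply monotonicity twice: $\Lambda(\Omega_-, \Omega_+) \leq \Lambda(\cl{B_\rho(y)}^{\,\prime}, \dots)$. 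The simplest route is to note that for the upper bound it suffices to exhibit a Cahn--Hoffman vector field realizing curvature $\le n/\rho$ locally; this is where the ball example is used, via monotonicity between $(\Omega_-,\Omega_+)$ and the pair whose facet is a thin annular collar of $\partial B_\rho(y)$ inside $D$ with the ball as its $\Omega_+$, whose nonlocal curvature is the constant $n/\rho$ by the annulus computation in the third example above. For the lower bound $\Lambda(\Omega_-,\Omega_+)(x) \geq -n/\rho$, by Remark~\ref{re:support-function-symmetry} the pair $(\cl\Omega_+^c, \cl\Omega_-^c)$ is again a smooth pair with the same facet $D$ and with nonlocal curvature $-\Lambda(\Omega_-,\Omega_+)$ (this sign-flip identity follows from $-\psi$ being a support function of the reflected pair and $-\partial^0 E(-\psi) = \partial^0 E(\psi)$, as the energy $E$ is even); applying the upper-bound argument to the reflected pair yields $-\Lambda(\Omega_-,\Omega_+)(x) \leq n/\rho(x)$, since $\rho(x)$ is unchanged — it depends only on $D$.

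Finally, since the estimate holds for every admissible $\rho < \rho(x)$ with the relevant center, letting $\rho \uparrow \rho(x)$ gives $\abs{\Lambda(\Omega_-,\Omega_+)(x)} \leq n/\rho(x)$ for a.e. $x \in D$. The main obstacle I anticipate is the bookkeeping in the comparison step: one must produce, from a ball $B_\rho(y) \subset D$, an auxiliary genuinely \emph{smooth} pair whose nonlocal curvature near $x$ is exactly the constant $\pm n/\rho$ and which sits in the correct inclusion relation with $(\Omega_-,\Omega_+)$, so that Theorem~\ref{th:monotonicity} applies with $x$ in the intersection of the facets; verifying the $C^2$-boundary and positive-distance conditions for the modified sets, and handling the degenerate cases $\Omega_+ = \emptyset$ or $\Omega_- = \Tn$, is the part that needs care, though it is routine given the machinery already developed.
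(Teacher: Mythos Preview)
Your high-level strategy is the paper's: monotonicity (Theorem~\ref{th:monotonicity}) plus the explicit ball curvature. But the execution has a real gap at exactly the point you flag as ``the main obstacle'': you never produce a valid comparison pair.

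Your first attempt, $(\Omega_-,\Omega_+ \cup B_\rho(y))$, satisfies the inclusions but leaves you with no way to compute its nonlocal curvature near $x$; you notice this yourself. Your ``simplest route'' --- comparing $(\Omega_-,\Omega_+)$ to a pair whose facet is a thin annular collar of $\partial B_\rho(y)$ with the ball as its inner set, i.e., something like $(B_{\rho+\delta}(y), B_\rho(y))$ --- fails on two counts. First, the inclusion $\Omega_- \subset B_{\rho+\delta}(y)$ required by Theorem~\ref{th:monotonicity} is false in general. Second, the annulus Example you invoke does \emph{not} give curvature $n/\rho$: the constant there is $n(R_-^{n-1}-R_+^{n-1})/(R_-^n - R_+^n)$, which tends to $(n-1)/\rho$, not $n/\rho$, as the collar thins.

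The paper's comparison pair is $(\Tn,\, \Tn \setminus \cl B_\rho(y))$. Its facet is exactly $\cl B_\rho(y)$; its nonlocal curvature is the constant $+n/\rho$ (same Cahn--Hoffman field $z(x)=-(x-y)/\rho$ as in Example~\ref{ex:ball-solution}, but now the boundary condition on $\partial B_\rho$ is $z\cdot(-\nu)=+1$ since the sphere is $\partial\Omega_+$, flipping the sign). The inclusions are automatic: $\Omega_- \subset \Tn$ trivially, and $\Omega_+ \subset \Tn \setminus \cl B_\rho(y)$ because $\cl B_\rho(y) \subset D$ forces $\cl B_\rho(y)\cap\Omega_+=\emptyset$. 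Monotonicity then gives $\Lambda(\Omega_-,\Omega_+)\leq n/\rho$ a.e.\ on $B_\rho(y)\ni x$. The lower bound is symmetric, using $(B_\rho(y),\emptyset)$ directly (inclusions $B_\rho(y)\subset\Omega_-$ and $\emptyset\subset\Omega_+$), giving $-n/\rho \leq \Lambda(\Omega_-,\Omega_+)$; your reflection argument via Remark~\ref{re:support-function-symmetry} also works but is unnecessary.
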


\begin{proof}
Let $\psi$ be the support function of $(\Omega_-, \Omega_+)$ from
Proposition~\ref{pr:support-function}
and set $M = \max \psi$.

We fix $\xi \in D$
and choose $\rho \in (0,1/4)$ and $y$ for which $\xi \in B_\rho(y) \subset \cl{B}_\rho(y) \subset \interior D$.
There also exists $\de \in (0,1/4)$
such that $\cl{B}_{\rho + \de}(y) \subset D$. 

We introduce the barrier $\vp \in \Lip(\Tn)$, 
\begin{align*}
\vp(x) = 
\begin{cases}
0 & \abs{x - \xi} \leq \rho,\\
M \frac{\abs{x- \xi} - \rho}{\de} & \rho < \abs{x - \xi} < \rho + \de,\\
M & \abs{x - \xi} \geq \rho+\de.
\end{cases}
\end{align*}
This function is a support function of the smooth pair
$(\Tn, A)$,
where $A = \Tn \setminus \cl B_\rho(\xi)$.
As in Example~\ref{ex:ball-solution}, it can be shown that $\vp \in \mathcal{D} (\partial E)$;
that example also shows that
$\Lambda(\Tn, A) = \frac{n}{\rho}$ on $B_\rho(y)$.
Clearly, $\Omega_+ \subset A$ and therefore
the monotonicity result in Theorem~\ref{th:monotonicity}
allows us to compare
$\Lambda(\Omega_-, \Omega_+) \leq \Lambda(\Tn, A) = \frac{n}{\rho}$
a.e. in $B_\rho(\xi)$.
The lower bound is analogous.
\qedhere\end{proof}

\section{Viscosity solutions}
\label{sec:viscosity-solutions}

In this section we finally introduce a new
notion of viscosity of \eqref{tvf}.
As in the previous works,
it is necessary to carefully chose an appropriate class
of test functions.
To this end we introduce admissible faceted test functions.

\begin{definition}
Function $\vp(x,t) = f(x) + g(t)$,
where $f \in \Lip(\Tn)$ and $g \in C^1([\hat t - \de, \hat t + \de])$
for some $\de > 0$ and $\hat t \in (0,T)$,
is called \emph{admissible faceted test function
at $(\hat x, \hat t) \in Q := \Tn \times (0,T)$
with pair $(\Omega_-, \Omega_+)$}
if
$f$ is a support function of a smooth pair $(\Omega_-, \Omega_+)$
and $\hat x \in \Omega_- \setminus \cl\Omega_+$.
The pair $(\Omega_-, \Omega_+)$ is called the \emph{pair associated
with $\vp$ at $(\hat x, \hat t)$}.
\end{definition}

\begin{definition}
\label{def:general-position}
We say that an admissible faceted test function $\vp$
at $(\hat x, \hat t) \in Q$ with pair $(\Omega_-, \Omega_+)$
is \emph{in general position of radius $\eta > 0$
with respect to $u : \cl Q \to \R$ at $(\hat x, \hat t)$}
if, for all $h \in \cl B_\eta(0)$,
we have
$\hat x + h \in \Omega_- \setminus \cl\Omega_+$
and
\begin{align*}
u(x - h, t) - \vp(x,t) &\leq u(\hat x, \hat t) - \vp(\hat x, \hat t)
& &\text{for all $x \in \Tn$, $t \in [\hat t - \eta, \hat t +\eta]$.}
\end{align*}
If such $\eta > 0$ exists, we simply say that $\vp$ is
in general position with respect to $u$ at $(\hat x, \hat t)$.
\end{definition}

The notion of general position 
formalizes for admissible test functions
the idea that two facets stay ordered
in the sense of Theorem~\ref{th:monotonicity}
even when one of them is shifted
by a small distance in an arbitrary direction.
This makes the family of test functions smaller
and turns out to be the right ingredient to
obtain the existence of solutions through stability,
without affecting the comparison principle.

\begin{definition}[Viscosity solutions]
\label{def:Lambda-subsol}
An upper semi-continuous function $u : \cl Q \to \R$
is a \emph{viscosity subsolution}
of \eqref{tvf}
if
the following holds:

\begin{enumerate}[(i)]

\item (\emph{faceted test})
If $\vp$ is an admissible faceted test function such that $\vp$ is
in general position of radius $\eta$
with respect to $u$
at a point $(\hat x, \hat t) \in Q$
then there exists $\de \in (0, \eta)$ such that
\begin{align*}
\vp_t(\hat x, \hat t)
    + F\pth{0, \essinf_{B_\de(\hat x)}\Lambda(\Omega_-, \Omega_+)} \leq 0,
\end{align*}
where $(\Omega_-, \Omega_+)$ is the smooth pair
associated with $\vp$ at $(\hat x, \hat t)$.

\item (\emph{conventional test})
If $\vp \in C^{2,1}_{x,t}(U)$ in a neighborhood $U \subset Q$ of
a point
$(\hat x, \hat t)$,
such that $u - \vp$ has a local maximum at $(\hat x, \hat t)$
and $\abs{\nabla \vp} (\hat x, \hat t) \neq 0$, then
\begin{align*}
\vp_t(\hat x, \hat t)
    + F\pth{\nabla \vp(\hat x, \hat t),
        k(\nabla \vp(\hat x,\hat t),
        \nabla^2 \vp(\hat x,\hat t))} \leq 0,
\end{align*}
where $\nabla^2$ is the Hessian and
\begin{align}
\label{Lambda-nondeg}
    k(p, X) &= \frac1{\abs{p}}
        \trace {X \pth{I - \frac{p \otimes p}{\abs{p}^2}}}
        && \text{for $p \in \Rn \setminus \set0$,
            $X \in \mathcal{S}^n$},
\end{align}
so that 
$k(\nabla \vp(\hat x, \hat t), \nabla^2 \vp(\hat x, \hat t))
    = \bra{\divo (\nabla\vp / \abs{\nabla\vp})}(\hat x, \hat t)$.
Here $\mathcal{S}^n$ is the set of $n \times n$-symmetric
matrices.
\end{enumerate}

\emph{Viscosity supersolution} can be defined similarly as
a lower semi-continuous function, replacing maximum by minimum,
$\leq$ by $\geq$, and $\essinf$ by $\esssup$. Furthermore, in (i)
$\vp$ must be such that $-\vp$ is in general position of radius $\eta$
with respect to $-u$
(see also Remark~\ref{re:support-function-symmetry}).

Function $u$ is a \emph{viscosity solution}
if it is both a subsolution and supersolution.
\end{definition}

\begin{remark}
Our definition is essentially similar to that of local
version in \cite{GG98ARMA}
where one-dimensional problem is studied for more general equation
$u_t + F(u_x, (W'(u_x))_x) = 0$
with convex $W$ whose derivative has a discrete set of jumps,
not necessarily a singleton.
\end{remark}

\section{Comparison principle}
\label{sec:comparison-principle}

Our next task is to prove the comparison principle
on the space-time cylinder $Q := \T^n \times (0,T)$
for any fixed $T > 0$.

\begin{theorem}[Comparison]
\label{th:comparison}
Let $u$ and $v$
be respectively a bounded viscosity subsolution
and a bounded viscosity supersolution
of \eqref{tvf} on $Q$.
If $u \leq v$ at $t = 0$ then $u \leq v$ on $Q$.
\end{theorem}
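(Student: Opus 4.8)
The plan is to argue by contradiction following the classical doubling-of-variables scheme for viscosity solutions, adapted to the faceted test functions. Suppose $u$ is a bounded subsolution, $v$ a bounded supersolution, $u \leq v$ at $t = 0$, but $\sup_Q (u - v) > 0$. As is standard, after subtracting a small multiple of $1/(T-t)$ from $v$ (or adding $\gamma/(T-t)$ terms as in the introduction), we may assume the positive maximum is attained at an interior time. We then double the spatial and time variables, introducing the auxiliary parameter $\zeta \in \Rn$ and studying the maxima of
\begin{align*}
\Phi_\zeta &= u(x,t) - v(y,s) - \abs{x - y - \zeta}^2/2\e - S(t,s),\\
S(t,s) &= \abs{t-s}^2/2\si + \ga/(T-t) + \ga/(T-s),
\end{align*}
over $x, y \in \Tn$ and $t, s \in [0,T]$, for small $\e, \si, \ga > 0$. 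Standard compactness and penalization estimates give, for each fixed small $\e, \si, \ga$, a maximizer; letting $\zeta \to 0$ one obtains a maximizer $(\hat x, \hat t, \hat x, \hat s)$ of $\Phi_0$ with $\hat t, \hat s \in (0,T)$ and, after taking $\e \to 0$, that $\abs{\hat x - \hat y} \to 0$, $\abs{\hat t - \hat s} \to 0$, and $u(\hat x,\hat t) - v(\hat y,\hat s)$ stays bounded below by a positive constant. The role of $\zeta$, as recalled in the introduction, is to flatten the graphs of $u$ near $\hat x$ and of $v$ near $\hat y$: averaging $\Phi_\zeta$ over $\zeta$ in a small ball forces $u(\cdot,\hat t)$ to be nearly constant on the superlevel set $U$ and $v(\cdot,\hat s)$ nearly constant on the sublevel set $V$, which is precisely the content of the cited Corollary~\ref{co:extended-ordering}.

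Next I would split into two cases according to whether $\nabla(\text{something})$ vanishes at the contact point. If at the maximizer the natural test functions have nonvanishing gradient — i.e. the contact does not occur on a flat facet — then the penalization function $\abs{x - y - \zeta}^2/2\e + S$ provides conventional $C^{2,1}$ test functions for $u$ and $v$, and one applies part (ii) of Definition~\ref{def:Lambda-subsol} together with the Crandall–Ishii lemma. Since $k(p,X)$ is degenerate elliptic in $X$ and $F$ is degenerate elliptic in its second argument, subtracting the two viscosity inequalities and using that the Hessians produced by the Crandall–Ishii lemma satisfy the usual matrix inequality yields a contradiction as $\e \to 0$; this is entirely the standard argument for singular-at-the-origin but locally-elliptic equations, as in \cite{Giga06}.

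The heart of the matter — and the main obstacle — is the faceted case, where $\hat x$ lies on a flat facet of (the flattened) $u$ and $\hat y$ on a flat facet of $v$ with $u(\hat x, \hat t) = v(\hat y, \hat s)$ (after the $\zeta$-flattening makes the two facets literally flat). Here one must manufacture, from the sets $U$ and $V$ in Corollary~\ref{co:extended-ordering}, smooth pairs $(\Omega_-^u, \Omega_+^u)$ and $(\Omega_-^v, \Omega_+^v)$ and corresponding admissible faceted test functions $\vp^u = f^u + g^u$, $\vp^v = f^v + g^v$ that are in general position with respect to $u$ and $-v$ respectively, and whose associated facets are ordered: $\Omega_-^u \subset \Omega_-^v$ and $\Omega_+^u \subset \Omega_+^v$ (or the analogous inclusions). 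This requires approximating the possibly-irregular sets $U$, $V$ from outside/inside by sets with $C^2$ boundaries while preserving a positive gap between the inner and outer components (so that the pairs are genuinely smooth), and choosing the $g$'s compatible with the time-penalization $S$; the general-position requirement of radius $\eta$ is exactly what the $\zeta$-flattening buys us. Once such ordered test functions are in hand, Definition~\ref{def:Lambda-subsol}(i) gives, for some small $\de$,
\begin{align*}
g^u_t(\hat t) + F\pth{0, \essinf_{B_\de(\hat x)} \Lambda(\Omega_-^u, \Omega_+^u)} &\leq 0,\\
g^v_t(\hat s) + F\pth{0, \esssup_{B_\de(\hat y)} \Lambda(\Omega_-^v, \Omega_+^v)} &\geq 0,
\end{align*}
and the monotonicity theorem (Theorem~\ref{th:monotonicity}), $\Lambda(\Omega_-^u,\Omega_+^u) \leq \Lambda(\Omega_-^v,\Omega_+^v)$ a.e. on the overlap, combined with the degenerate ellipticity of $F$, shows the two curvature terms are ordered the right way; subtracting and using the structure of $S$ (so that $g^u_t(\hat t) - g^v_t(\hat s)$ is controlled, ultimately $\approx 2\ga/(T-\cdot)^2 > 0$) produces the contradiction. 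One should also dispatch the mixed sub-cases — contact on a facet for one function but at a point of nonzero gradient for the other — by a routine combination of the two arguments. Finally, sending the auxiliary parameters to zero in the correct order ($\zeta \to 0$, then $\e \to 0$, then $\si, \ga \to 0$) completes the proof; the delicate bookkeeping is in making the facet-approximation step produce genuinely smooth pairs with the required inclusions and general-position radius uniformly in the vanishing parameters.
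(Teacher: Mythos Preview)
Your overall architecture is right---doubling of variables with the extra shift~$\zeta$, a conventional case versus a faceted case, construction of ordered smooth pairs, then Theorem~\ref{th:monotonicity} plus degenerate ellipticity---but the mechanism you describe for the flattening step is not correct, and the dichotomy is mis-stated in a way that matters.

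First, the split into cases is not ``does the gradient vanish at the contact point''. The correct dichotomy, which must come \emph{before} any flattening, is: either (Case~I) for \emph{every} $\zeta$ with $\abs{\zeta}<\kappa(\e)$, \emph{every} maximizer of $\Phi_\zeta$ satisfies $x-y=\zeta$; or (Case~II) for \emph{some} such $\zeta$ there is a maximizer with $x-y\neq\zeta$. In Case~II you apply the conventional Crandall--Ishii argument at that particular $\zeta$ (the penalization gradient $(x-y-\zeta)/\e$ is nonzero there). In Case~I, and only there, you get the flattening. There are no ``mixed sub-cases'' to dispatch: the dichotomy is on the family $\{\Phi_\zeta\}$, not on $u$ and $v$ separately. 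Also, it is not automatic that a maximizer of $\Phi_0$ has $\hat x=\hat y$; that is precisely part of the Case~I hypothesis.

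Second, the flattening does not come from ``averaging $\Phi_\zeta$ over $\zeta$''. The point is the \emph{constancy lemma} (Lemma~\ref{le:constancy}): under the Case~I hypothesis, $\ell(\zeta)=\max\Phi_\zeta$ is constant in $\zeta$ on a ball. From $\ell(\zeta)=\ell(0)$ one reads off Lemma~\ref{le:orderInDoubling} and then Corollary~\ref{co:extended-ordering}; without the constancy lemma there is no route to those inequalities. Your sketch never invokes this lemma and gives no substitute.

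Finally, the parameters $\e,\si,\ga$ are chosen small once (Proposition~\ref{pr:small-constants}) and then \emph{fixed}; the contradiction comes directly from $\ga/(T-\hat t)^2+\ga/(T-\hat s)^2>0$, not from a limiting procedure. Sending $\zeta\to0$, then $\e\to0$, etc., as you propose, is neither needed nor how the argument closes.
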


We will start with a variation of the standard
doubling-of-variables argument.
As in \cite{GG98ARMA}, we define 
\begin{align*}
w(x,t,y,s) = u(x,t) - v(y,s)
\end{align*} 
and, given positive constants $\e, \si, \ga$
and a vector $\zeta \in \Tn$,
the functions
\begin{align*}
\Psi_\zeta(x,t,y,s; \e,\si,\ga) &:= 
    \frac{\abs{x - y -\zeta}^2}{2\e} + S(t,s; \si, \ga),\\
S(t,s; \si, \ga) &:= 
    \frac{\abs{t -s}^2}{2\si} + \frac{\gamma}{T - t}
    + \frac{\gamma}{T - s},
\end{align*}
where $\abs{x - y - \zeta}$ is interpreted
as in \eqref{dist-on-torus}.

We analyze the maxima of functions
\begin{align*}
\Phi_\zeta(x,t,y,s;\e,\si,\ga)
    := w(x,t,y,s) - \Psi_\zeta(x,t,y,s;\e,\si,\ga).
\end{align*}
The purpose of introducing an extra parameter $\zeta$
is to restrict the class of possible contact points
that require a construction of an admissible faceted test function
in general position (Definition~\ref{def:Lambda-subsol}(i))
down to the profiles that allow for small shifts.
Indeed, if at least one maximum of $\Psi_\zeta$ for small $\zeta$ occurs
at a point $(x, t, y, s)$ such that
$x - y \neq \zeta$ then $\grad_x \Phi_\zeta \neq 0$ at the point
of maximum,
and we can apply the classical viscosity argument and
proceed with the well-known construction of
a test function for Definition~\ref{def:Lambda-subsol}(ii).
On the other hand, if all maxima of $\Phi_\zeta$ for small $\zeta$ 
occur at points with $x - y = \zeta$,
it is necessary to proceed with a more involved construction of
an admissible faceted test function in general position.
Fortunately, the structure of maxima with respect to $\zeta$
provides enough information to infer ``flatness'' 
of $u$ and $v$ at the contact point,
which in turn guarantees that the construction is possible.
We carefully analyze the profiles of $u$ and $v$
at the level of the contact,
construct two smooth pairs that represent facets
ordered in the sense of Theorem~\ref{th:monotonicity},
and construct admissible faceted test function
with the help of the support function introduced
in Proposition~\ref{pr:support-function}.
The definition of viscosity solution then
yields a contradiction.

\bigskip
Therefore, suppose that the comparison principle does not hold,
i.e.,
\begin{align*}
m_0 := \sup_{(x,t)\in Q} \bra{u(x,t) - v(x,t)} > 0.
\end{align*}
We have the following proposition:
\begin{proposition}
\label{pr:small-constants}
Set $M := \max_{\cl Q \times \cl Q} w \geq m_0$,
$m_0' := \frac78 m_0$,
$\kappa(\e) :=
    \frac12\pth{\e \pth{m_0 - m_0'}}^{1/2}$.

There exist positive constants $\e_0, \si_0$ and $\ga_0$
such that
for any
$\e \in (0, \e_0)$, $\si \in (0, \si_0)$, $\ga \in (0,\ga_0)$
and $\zeta\in \Tn$ such that $\abs{\zeta} \leq \kappa(\e)$
the following is true:

If $(x_\zeta, t_\zeta, y_\zeta, s_\zeta)
\in \argmax_{\cl Q \times \cl Q} \Phi_\zeta(\cdot; \e, \si, \ga)$ then
\begin{enumerate}
\romanlist
\item $(x_\zeta, t_\zeta, y_\zeta, s_\zeta) \in Q \times Q$,
\item $\abs{t_\zeta - s_\zeta} \leq (M \si)^{1/2}$,
      $\abs{x_\zeta - y_\zeta - \zeta} \leq (M\e)^{1/2}$,
\item $\sup \Phi_\zeta > m_0'$.
\end{enumerate}
\end{proposition}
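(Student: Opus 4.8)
The plan is to run the usual doubling-of-variables bookkeeping, establishing the three assertions in the order (iii), (ii), (i). Record first the elementary facts used throughout: every term of $\Psi_\zeta$ is nonnegative, so $\Phi_\zeta = w - \Psi_\zeta \leq w \leq M$, while $\Psi_\zeta \to +\infty$ as $t \to T^-$ or $s \to T^-$, so that $\Phi_\zeta$ is upper semi-continuous on $\cl Q \times \cl Q$ and tends to $-\infty$ near the final-time faces; consequently $\Phi_\zeta$ attains its maximum, $\max \Phi_\zeta \leq M$, and any maximizer automatically satisfies $t_\zeta, s_\zeta < T$. The single genuinely delicate point is the strict lower bound $t_\zeta, s_\zeta > 0$ in (i): it is the only place where the initial ordering $u \leq v$ at $t = 0$ enters, and since $u$ and $v$ are merely semicontinuous it has to be extracted by a contradiction/compactness argument that presupposes (ii) and (iii).

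For (iii), fix once and for all a point $(x^*, t^*) \in Q$ with $u(x^*, t^*) - v(x^*, t^*) > m_0 - \tfrac{1}{16}(m_0 - m_0')$; such a point exists because $m_0 = \sup_{(x,t) \in Q}[u(x,t) - v(x,t)]$. Since $\kappa(\e)^2/(2\e) = \tfrac{1}{8}(m_0 - m_0')$, we have $\abs{\zeta}^2/(2\e) \leq \tfrac{1}{8}(m_0 - m_0')$ whenever $\abs{\zeta} \leq \kappa(\e)$, so evaluating $\Phi_\zeta$ at the diagonal point $(x^*, t^*, x^*, t^*)$,
\begin{align*}
\max \Phi_\zeta
&\geq u(x^*,t^*) - v(x^*,t^*) - \frac{\abs{\zeta}^2}{2\e} - \frac{2\ga}{T - t^*}\\
&> m_0 - \tfrac{3}{16}(m_0 - m_0') - \frac{2\ga}{T - t^*}.
\end{align*}
As $m_0 - \tfrac{3}{16}(m_0 - m_0') - m_0' = \tfrac{13}{16}(m_0 - m_0') > 0$, we pick $\ga_0 > 0$ so small that $2\ga_0/(T - t^*) < \tfrac{13}{16}(m_0 - m_0')$; then $\max \Phi_\zeta > m_0'$ for every $\ga \in (0, \ga_0)$, every $\e, \si > 0$, and every $\zeta$ with $\abs{\zeta} \leq \kappa(\e)$, which is (iii).

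For (ii), take $\ga \in (0, \ga_0)$ and let $(x_\zeta, t_\zeta, y_\zeta, s_\zeta)$ be any maximizer of $\Phi_\zeta$. By (iii) and $w \leq M$ the nonnegative number $\Psi_\zeta(x_\zeta, t_\zeta, y_\zeta, s_\zeta) = w(x_\zeta, t_\zeta, y_\zeta, s_\zeta) - \max \Phi_\zeta$ is less than $M - m_0'$, hence less than $M$; since each of its four nonnegative summands is then less than $M$, we obtain in particular $\abs{t_\zeta - s_\zeta}^2 < 2\si M$ and $\abs{x_\zeta - y_\zeta - \zeta}^2 < 2\e M$. This is (ii) up to the absolute constant $\sqrt 2$, which is irrelevant for the remainder; the precise constants stated can be recovered by a slightly sharper choice of comparison point if one insists. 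Together with $t_\zeta, s_\zeta < T$ noted above, only the positivity of $t_\zeta$ and $s_\zeta$ remains.

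For (i), suppose the required $\e_0, \si_0, \ga_0$ do not exist. Then there are sequences $\e_k, \si_k, \ga_k \downarrow 0$, vectors $\zeta_k$ with $\abs{\zeta_k} \leq \kappa(\e_k)$, and maximizers $(x_k, t_k, y_k, s_k)$ of $\Phi_{\zeta_k}$ with $\min(t_k, s_k) = 0$. For $k$ large, $\ga_k < \ga_0$, so (iii) gives $\Phi_{\zeta_k}(x_k, t_k, y_k, s_k) > m_0'$, while (ii) gives $\abs{t_k - s_k} \to 0$ and $\abs{x_k - y_k - \zeta_k} \to 0$; since $\min(t_k, s_k) = 0$ and $\abs{\zeta_k} \leq \kappa(\e_k) \to 0$, this forces $t_k \to 0$, $s_k \to 0$, and $\abs{x_k - y_k} \to 0$. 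Passing to a subsequence, $x_k \to \bar x$ in the compact torus $\Tn$, whence also $y_k \to \bar x$. Dropping the nonnegative penalty term,
\[
m_0' < \Phi_{\zeta_k}(x_k, t_k, y_k, s_k) \leq u(x_k, t_k) - v(y_k, s_k);
\]
taking $\limsup_{k\to\infty}$ and using the upper semi-continuity of $u$, the lower semi-continuity of $v$, and the hypothesis $u(\cdot, 0) \leq v(\cdot, 0)$, we get $m_0' \leq u(\bar x, 0) - v(\bar x, 0) \leq 0$, which contradicts $m_0' = \tfrac{7}{8} m_0 > 0$. This produces the required $\e_0, \si_0, \ga_0$ (intersected with the value found in (iii)) and finishes the proof.
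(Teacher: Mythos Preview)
Your argument is correct and is precisely the standard doubling-of-variables bookkeeping that underlies such propositions; the paper itself does not prove this statement but simply cites \cite[Proposition~7.1, Remark~7.2]{GG98ARMA}, so there is no alternative approach to compare against. Your organization---first (iii) via evaluation at a near-optimal diagonal point, then (ii) by bounding the penalty at a maximizer, then (i) by a compactness/semicontinuity contradiction using the initial ordering---is exactly how the cited reference proceeds.

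One cosmetic remark: in (ii) you obtain $\abs{t_\zeta - s_\zeta} \leq (2M\si)^{1/2}$ and $\abs{x_\zeta - y_\zeta - \zeta} \leq (2M\e)^{1/2}$ rather than the constants $(M\si)^{1/2}$, $(M\e)^{1/2}$ printed in the statement, and you are right that this discrepancy is immaterial for the rest of the paper (these bounds are only used qualitatively, to force $t_k - s_k \to 0$ and $x_k - y_k \to 0$ in Case~I and in your own step (i)). If you want the stated constants exactly, note that $\max \Phi_\zeta > m_0' > 0$ already gives $\Psi_\zeta < M$ at the maximizer, and hence each summand is at most $M$; but one still picks up the factor $2$ from the $\tfrac{1}{2\e}$ and $\tfrac{1}{2\si}$ in the definition of $\Psi_\zeta$, so the printed constants appear to be a harmless slip inherited from the reference.
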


\begin{proof}
See \cite[Proposition~7.1, Remark~7.2]{GG98ARMA}.
In our case, we take $\gamma_0 := \min(\gamma_0, \gamma_0')$ for simplicity.
\qedhere\end{proof}

The possible scenarios can be divided into two cases
that will be treated independently:
\begin{itemize}
\item Case I: 
all the points of maximum lie on the space diagonals, i.e.,
\begin{align}
\label{case-I-argmax}
\argmax_{\cl Q \times \cl Q} \Phi_\zeta
    \subset \set{(x,t,y,s) \in \cl Q \times \cl Q : x - y = \zeta}
\end{align}
for all $\e, \si, \ga$ and $\abs{\zeta} < \kappa(\e)$.

\item Case II: 
there exists $\e, \si, \ga$, $\abs{\zeta} < \kappa(\e)$
and $(x,t,y,s) \in \argmax_{\cl Q \times \cl Q} \Phi_\zeta$
such that $x - y \neq \zeta$.
\end{itemize}

\subsection{Case I}

Let us fix
$\e \in (0, \e_0)$, $\si \in (0, \si_0)$, $\ga \in (0, \ga_0)$
such that \eqref{case-I-argmax} holds for all $\zeta$, $\abs\zeta \leq
\kappa (\e)$.
To simplify the notation, we set $\la := \kappa(\e)/2$.
We also do not explicitly state the dependence of
the following formulas on the fixed $\e$, $\si$ and $\ga$.
Let us define
\begin{align*}
\ell(\zeta) = \sup_{\cl Q \times \cl Q} \Phi_\zeta.
\end{align*}

The first step is the application of
the constancy lemma \cite[Lemma 7.5]{GG98ARMA},
as in the proof of \cite[Proposition 7.6]{GG98ARMA},
to show that $\ell(\zeta)$ is constant for $\abs{\zeta} \leq \la$.

\begin{lemma}[Constancy lemma]
\label{le:constancy}
Let $K$ be a compact set in $\R^N$ for some $N > 1$
and let $h$ be a real-valued upper semi-continuous function on $K$.
Let $\phi$ be a $C^2$ function on $\R^d$ with $1 \leq d < N$.
Let $G$ be a bounded domain in $\R^d$.
For each $\zeta \in G$ assume that
there is a maximizer
$(r_\zeta, \rho_\zeta)\in K$ of
\begin{align*}
    H_\zeta(r, \rho) = h(r, \rho) - \phi(r - \zeta)
\end{align*}
over $K$ such that $\nabla \phi(r_\zeta - \zeta) = 0$.
Then,
\begin{align*}
    h_\phi(\zeta) = \sup\set{H_\zeta(r,\rho): (r, \rho)\in K}
\end{align*}
is constant on $G$.
\end{lemma}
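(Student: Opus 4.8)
The plan is to prove the constancy lemma by a connectedness argument on the set of $\zeta \in G$ at which $h_\phi$ attains its maximal value, or rather by showing $h_\phi$ is both lower and upper semi-continuous with a vanishing ``derivative'' in a suitable weak sense. Concretely, I would first observe that $h_\phi$ is upper semi-continuous on $G$: if $\zeta_j \to \zeta$, pick maximizers $(r_j, \rho_j)$ for $H_{\zeta_j}$; by compactness of $K$ pass to a subsequence $(r_j, \rho_j) \to (r_*, \rho_*) \in K$, and then $\limsup_j h_\phi(\zeta_j) = \limsup_j [h(r_j,\rho_j) - \phi(r_j - \zeta_j)] \leq h(r_*, \rho_*) - \phi(r_* - \zeta) \leq h_\phi(\zeta)$, using upper semi-continuity of $h$ and continuity of $\phi$.

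Next, and this is the heart of the matter, I would show that $h_\phi$ is locally constant. Fix $\zeta_0 \in G$ and let $(r_0, \rho_0) := (r_{\zeta_0}, \rho_{\zeta_0})$ be the given maximizer with $\nabla \phi(r_0 - \zeta_0) = 0$. For any nearby $\zeta$, the point $(r_0 + (\zeta - \zeta_0), \rho_0)$ is a competitor for $H_\zeta$ provided it lies in $K$ — so I need to either assume or arrange that $K$ contains a neighborhood of the relevant slice, but in the intended application $K = \cl Q \times \cl Q$ and the maximizers sit in the interior $Q \times Q$ (Proposition~\ref{pr:small-constants}(i)), so translating the $r$-coordinate by a small amount stays in $K$. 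Then
\begin{align*}
h_\phi(\zeta) \geq H_\zeta(r_0 + (\zeta - \zeta_0), \rho_0)
  = h(r_0 + (\zeta-\zeta_0), \rho_0) - \phi(r_0 - \zeta_0).
\end{align*}
This alone is not yet a clean inequality because of the $h$-term, so instead I would argue directly at the level of the value function. Writing $\tilde H_{\zeta}(r, \rho) = h(r,\rho) - \phi(r - \zeta)$ and noting $\phi(r - \zeta) = \phi((r - (\zeta - \zeta_0)) - \zeta_0)$, a change of variables $r \mapsto r + (\zeta - \zeta_0)$ shows $h_\phi(\zeta) = \sup_{(r,\rho) \in K_{\zeta - \zeta_0}} [h(r + (\zeta-\zeta_0), \rho) - \phi(r - \zeta_0)]$ where $K_{\zeta-\zeta_0}$ is a translate of $K$. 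The point where $\nabla\phi$ vanishes is where this translation trick combines with the stationarity: because $\nabla\phi(r_0 - \zeta_0) = 0$, moving $\zeta$ infinitesimally changes $\phi$ only to second order near the contact, while $h$ is merely upper semi-continuous, so one cannot differentiate. The right move is the one used in \cite{GG98ARMA}: consider the function $g(\zeta) := h_\phi(\zeta) + (\text{something})$ and show it has no strict local extremum, or — cleanest — show that for $\zeta$ near $\zeta_0$, $h_\phi(\zeta) \geq h_\phi(\zeta_0)$, and by symmetry of the argument (the roles of $\zeta$ and $\zeta_0$ are interchangeable since we only used stationarity of $\nabla\phi$ at the maximizer of $H_{\zeta_0}$), deduce equality — but stationarity is only assumed at each $\zeta$, so in fact $h_\phi(\zeta') \geq h_\phi(\zeta)$ holds for all $\zeta', \zeta$ near each point, giving local constancy, hence global constancy on the connected set $G$.

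I expect the main obstacle to be handling the inequality $h_\phi(\zeta) \geq h_\phi(\zeta_0)$ without differentiability of $h$: the trick is that because $\nabla\phi(r_{\zeta_0} - \zeta_0) = 0$ and $\phi \in C^2$, we have $\phi(r_{\zeta_0} - \zeta) - \phi(r_{\zeta_0} - \zeta_0) = O(|\zeta - \zeta_0|^2)$, so using $(r_{\zeta_0}, \rho_{\zeta_0})$ as a competitor gives $h_\phi(\zeta) \geq h_\phi(\zeta_0) - O(|\zeta-\zeta_0|^2)$; iterating along a chain of $N$ intermediate points spaced $|\zeta - \zeta_0|/N$ apart yields $h_\phi(\zeta) \geq h_\phi(\zeta_0) - N \cdot O(|\zeta-\zeta_0|^2/N^2) = h_\phi(\zeta_0) - O(|\zeta-\zeta_0|^2/N) \to h_\phi(\zeta_0)$ as $N \to \infty$, so $h_\phi(\zeta) \geq h_\phi(\zeta_0)$; swapping the two points gives the reverse inequality. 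This telescoping/chaining argument, rather than any differentiation, is what makes the lemma work, and is precisely the content of \cite[Lemma~7.5]{GG98ARMA}; I would reproduce it here, being careful that the chain of points stays in $G$ (using convexity of a small ball in $G$) and that the corresponding translated maximizers stay in $K$ (using that maximizers lie in the interior in the application, or simply working on the compact set $K$ directly with the translate-competitor being valid since we only translate within the domain of $h$).
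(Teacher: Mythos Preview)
The paper does not supply its own proof of this lemma; it simply quotes the statement and cites \cite[Lemma~7.5]{GG98ARMA}. Your final chaining argument is exactly the standard proof from that reference: use the maximizer $(r_{\zeta_i},\rho_{\zeta_i})$ at each intermediate point $\zeta_i$ as a competitor for $H_{\zeta_{i+1}}$, exploit $\nabla\phi(r_{\zeta_i}-\zeta_i)=0$ and $\phi\in C^2$ to get $h_\phi(\zeta_{i+1})\ge h_\phi(\zeta_i)-C|\zeta_{i+1}-\zeta_i|^2$ with a uniform $C$ (since $r_{\zeta_i}-\zeta_i$ ranges over a compact set), telescope along $N$ equally spaced points on a segment in $G$, and let $N\to\infty$. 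This gives $h_\phi(\zeta)\ge h_\phi(\zeta_0)$ for any two points joined by a segment in $G$; the reverse follows by swapping, and connectedness of the domain $G$ upgrades local constancy to global.

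Two minor comments on your write-up. First, the upper semi-continuity paragraph and the translated-competitor attempt $(r_0+(\zeta-\zeta_0),\rho_0)$ are dead ends you can delete: the chaining argument needs neither, and in particular you never need to translate a maximizer inside $K$ (you use $(r_{\zeta_i},\rho_{\zeta_i})\in K$ itself as the competitor for the neighboring $\zeta_{i+1}$), so your closing worry about ``translated maximizers staying in $K$'' is moot. Second, be explicit that the constant $C$ in the $O(|\cdot|^2)$ is uniform over the chain, which follows from compactness of $K$, boundedness of $G$, and $\phi\in C^2$; this is what makes the telescoping sum $N\cdot C(|\zeta-\zeta_0|/N)^2\to 0$ legitimate.
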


We apply Lemma~\ref{le:constancy}
with the following parameters:
\begin{gather*}
N = 2n + 2, \quad
d= n, \quad
\rho = (y, t, s) \in \Tn \times \R \times \R,\\
K = 
    \set{(x - y, y, t, s):
       (x,y) \in \Tn \times \Tn,
       (t,s) \in [0,T] \times [0,T]},\\
G = B_{2\la}(0),\\
h(r,\rho) = w(r + y, t, y, s) - S(t,s),
\quad
\phi(r) = \frac{\abs{r}^2}{2\e}.
\end{gather*}
$K$ can be treated as a compact subset of $\Rn$
in a straightforward way.
We infer that $\ell(\zeta) = h_\phi(\zeta)$ is constant
for $\abs{\zeta} \leq \la$.

Therefore we have also an ordering
analogous to \cite[Corollary 7.9]{GG98ARMA}:

\begin{lemma}
\label{le:orderInDoubling}
Let $(\hat x, \hat t, \hat x, \hat s) \in \argmax \Phi_0$. Then
\begin{align*}
u(x,t) - v(y,s) - S(t,s)
    \leq u(\hat x, \hat t) - v(\hat x, \hat s) - S(\hat t, \hat s)
\end{align*}
for all $s,t \in (0,T)$
and $x,y \in \Tn$ such that $\abs{x -y} \leq \la := \kappa(\e)/2$.
\end{lemma}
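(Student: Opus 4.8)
The plan is to read off the ordering as an immediate consequence of the constancy of $\ell(\zeta)$ on $\set{\abs\zeta \le \la}$ established above via the constancy lemma; no new estimates are needed, and the argument is just a short unwinding of definitions.

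First I would record that, since $(\hat x, \hat t, \hat x, \hat s) \in \argmax \Phi_0$ and the spatial penalty of $\Psi_0$ vanishes on the diagonal $x = y$,
\[
\ell(0) = \Phi_0(\hat x, \hat t, \hat x, \hat s)
    = u(\hat x, \hat t) - v(\hat x, \hat s) - S(\hat t, \hat s),
\]
which is finite because $\hat t, \hat s \in (0,T)$ by Proposition~\ref{pr:small-constants}. Next, given $x, y \in \Tn$ with $\abs{x - y} \le \la$ and $t, s \in (0,T)$, I would take the shift $\zeta := x - y \in \Tn$; this is admissible since $\abs\zeta = \abs{x - y} \le \la$, so $\ell(\zeta) = \ell(0)$ by the constancy just proven. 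For this $\zeta$ the penalty term evaluated at $(x,t,y,s)$ is zero, because $x - y - \zeta = 0$ in $\Tn$ and hence $\abs{x - y - \zeta} = 0$ under the convention \eqref{dist-on-torus}. Therefore
\[
u(x,t) - v(y,s) - S(t,s) = \Phi_\zeta(x,t,y,s)
    \le \sup_{\cl Q \times \cl Q} \Phi_\zeta = \ell(\zeta) = \ell(0),
\]
and combining this with the previous display gives the asserted inequality.

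There is essentially no obstacle here: the lemma is a formal corollary of the constancy lemma and the definition of $\ell$ as a supremum. The only points requiring a moment's care are to check that $\zeta = x - y$ lies in the range on which $\ell$ is constant, which is exactly the hypothesis $\abs{x - y} \le \la$, and to note that, in the periodic setting, the quadratic penalty genuinely vanishes for this choice of $\zeta$ rather than merely being small.
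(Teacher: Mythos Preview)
Your proof is correct and follows essentially the same route as the paper: set $\zeta = x - y$, use the vanishing of the quadratic penalty to identify $\Phi_\zeta(x,t,y,s)$ with $u(x,t) - v(y,s) - S(t,s)$, bound this by $\ell(\zeta)$, and invoke the constancy of $\ell$ to reach $\ell(0) = u(\hat x,\hat t) - v(\hat x,\hat s) - S(\hat t,\hat s)$. The paper's proof is the same chain of equalities and inequalities, only more tersely written.
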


\begin{proof}
For $\abs{x-y} \leq \la$,
a straightforward computation using the constancy of $\ell$
yields
\begin{align*}
u(x,t) - v(y,s) - S(t,s)
    &= \Phi_{x - y}(x,t,y,s) \\
    &\leq \ell(x-y) 
    =\ell(0) \\
    &= u(\hat x, \hat t) - v(\hat x, \hat s) - S(\hat t, \hat s).
    \quad \qedhere
\end{align*}
\end{proof}

We will use Lemma~\ref{le:orderInDoubling} to create room
for ordered smooth facets,
whose support functions will play the role of admissible test functions
for $u$ and $v$.
The test functions themselves do not have to be ordered
due to the positive 0-homogeneity of the nonlocal curvature in our setting
(recall Remark~\ref{re:support-scal-invar}).

At this moment,
we will fix
$(\hat x, \hat t, \hat x, \hat s) \in
\argmax_{\cl Q \times \cl Q} \Phi_0$
and we set
\begin{align*}
\al &:= u(\hat x, \hat t), & \be &:= v(\hat x, \hat s).
\end{align*}
Let us define the closed sets 
\begin{align*}
U &:= \set{u(\cdot, \hat t) \geq \al},
    & V &:= \set{v(\cdot, \hat s) \leq \be},
\end{align*}
see Figure~\ref{fig:comparison-facets}.
\begin{figure}
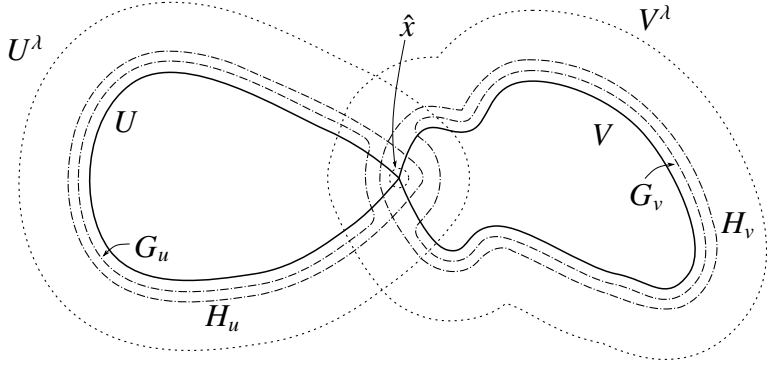

\centering
\fig{Fig1}{4.5in}
\caption{Facet construction for $u$ and $v$ in the proof of
the comparison theorem.}
\label{fig:comparison-facets}
\end{figure}

It is convenient to introduce the notion of dilation and erosion
of a set in morphology.
For any set $A \subset \Tn$ and $\rho \in \R$, $A^\rho$ denotes
the generalized neighborhood
\begin{align}
\label{set-neighborhood}
 A^\rho := 
 \begin{cases}
 A + \cl B_\rho(0) & \rho > 0,\\
 A & \rho = 0,\\
 \set{x \in A : \cl B_{\abs\rho}(x) \subset A} & \rho < 0.
 \end{cases}
 \end{align} 
Here $A+B = \set{x+y: x \in A,\ y \in B}$ is the vector sum
called the Minkowski sum.
In image analysis it is often written as
$A^\rho = A \oplus \cl B_\rho(0)$ for $\rho > 0$
and $A^{\rho} = A \ominus \cl B_{\abs\rho}(0)$ for $\rho < 0$,
where $\oplus$ denotes the Minkowski addition and $\ominus$
denotes the Minkowski decomposition.
In morphology, $\oplus$ is called dilation
and $\ominus$ is called erosion.
If $A$ is open and $\rho > 0$, we have
\begin{align*}
A^\rho &= \set{x \in \Tn : \dist(x, A) < \de},\\
A^{-\rho} &= \set{x \in A : \dist(x,\partial A) > \de}.
\end{align*}
Clearly, for any $\rho \in \R$,
the generalized neighborhood $A^\rho$ is open (resp. closed)
provided that
$A$ itself is open (resp. closed).
Moreover
\begin{align*}
A^{-\rho} = \pth{\pth{A^c}^\rho}^c,
\end{align*}
where $A^c$ denotes the complement of $A$, i.e., $A^c := \T^n \setminus A$.
If $A$ has the interior ball property with radius $r > 0$
then $\pth{A^{-\rho}}^\rho = A$ for any $0 < \rho \leq r$. 

\begin{corollary}
\label{co:extended-ordering}
We have
\begin{align*}
u(x, t)
    \leq \al + S(t, \hat s) - S(\hat t, \hat s)
    \qquad \text{for }  x\in V^\la,\ t \in (0,T),
\end{align*}
and 
\begin{align*}
v(y, s)
    \geq \be + S(\hat t, s) - S(\hat t, \hat s)
    \qquad \text{for }  y\in U^\la,\ s \in (0,T).
\end{align*}
\end{corollary}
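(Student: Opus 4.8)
The plan is to read Corollary~\ref{co:extended-ordering} off Lemma~\ref{le:orderInDoubling} by specializing the two free points in that lemma to lie in $V$, respectively $U$, where the values of $v$ and $u$ are controlled by $\be$ and $\al$. Nothing beyond Lemma~\ref{le:orderInDoubling} and the Minkowski-sum description \eqref{set-neighborhood} of the dilation is needed.

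For the first inequality, fix $x \in V^\la$ and $t \in (0,T)$. Since $V^\la = V + \cl B_\la(0)$, there is a point $y \in V$ with $\abs{x-y} \leq \la$ (the set $V$ contains $\hat x$, so it is nonempty and this is never vacuous), and $y \in V$ means $v(y,\hat s) \leq \be$. Applying Lemma~\ref{le:orderInDoubling} to $x$, $y$, the given $t$, and $s = \hat s$ --- which is legitimate because $\abs{x-y} \leq \la$ --- gives
\begin{align*}
u(x,t) - v(y,\hat s) - S(t,\hat s) \leq \al - \be - S(\hat t,\hat s).
\end{align*}
Transferring $v(y,\hat s) + S(t,\hat s)$ to the right-hand side and discarding it via $v(y,\hat s) \leq \be$ produces $u(x,t) \leq \al + S(t,\hat s) - S(\hat t,\hat s)$. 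The second inequality is the mirror image: given $y \in U^\la$ and $s \in (0,T)$, pick $x \in U$ with $\abs{x-y} \leq \la$, so that $u(x,\hat t) \geq \al$; apply Lemma~\ref{le:orderInDoubling} now with $t = \hat t$ and the given $s$, then rearrange using $u(x,\hat t) \geq \al$ to get the asserted lower bound on $v(y,s)$.

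I do not expect any real obstacle here: this is pure bookkeeping on top of Lemma~\ref{le:orderInDoubling}, whose proof (via the constancy of $\ell$) has already been carried out. The only points deserving a little care are unwinding the dilations $V^\la$ and $U^\la$ into ``there exists a partner point within distance $\la$'', so that the hypothesis $\abs{x-y}\le\la$ of Lemma~\ref{le:orderInDoubling} is met, and keeping $s = \hat s$ frozen in the first estimate and $t = \hat t$ frozen in the second, so that exactly the intended $S$-terms remain after cancellation.
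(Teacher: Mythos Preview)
Your proof is correct and follows exactly the same approach as the paper's own proof: pick a partner point in $V$ (resp.\ $U$) within distance $\la$, apply Lemma~\ref{le:orderInDoubling} with $s=\hat s$ (resp.\ $t=\hat t$), and use the defining inequality of $V$ (resp.\ $U$) to cancel the $\be$'s (resp.\ $\al$'s). The paper's proof is just a one-line version of the argument you wrote out in full.
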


\begin{proof}
If $x\in V^\la$ then there exists $y \in V$ with $\abs{x - y} \leq \la$ and the conclusion follows from Lemma~\ref{le:orderInDoubling}
and the definition of $V$.
The proof of the second inequality is analogous.
\qedhere\end{proof}

In what follows,
we will need to construct smooth sets
that approximate a given general set.
The next lemma provides such approximation.

\begin{lemma}
\label{le:smooth-nbd}
For any set $E \subset \Tn$
and constants $r > 0$, $\de > 0$,
there exists an open set $G \subset \Tn$
with a smooth boundary
such that
\begin{align*}
\cl E^r \subset G \subset \cl G \subset \interior (E^{r+\de}).
\end{align*}
\end{lemma}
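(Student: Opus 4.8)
The plan is to realize $G$ as a smooth sublevel set of a mollified distance function, choosing the level by Sard's theorem.

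First I would recast the problem metrically. By the definition \eqref{set-neighborhood} of the generalized neighborhood, and since closed subsets of $\Tn$ are compact, $K := \cl E^r = \cl E + \cl B_r(0)$ is a compact set, while $E^{r+\de} = E + \cl B_{r+\de}(0)$. The one non-formal ingredient is the inclusion
\[
K + B_\de(0) \subseteq E^{r+\de}.
\]
Indeed, for $x \in \cl E$, $b \in \cl B_r(0)$ and $c \in B_\de(0)$, pick $x' \in E$ with $\abs{x - x'} < \de - \abs{c}$; then $x + b + c = x' + \bra{(x - x') + b + c}$ and $\abs{(x - x') + b + c} < r + \de$, so $x + b + c \in E + \cl B_{r+\de}(0)$. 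In particular $K + B_{3\de/4}(0)$ is an \emph{open} subset of $E^{r+\de}$, hence $K + B_{3\de/4}(0) \subseteq \interior(E^{r+\de})$. (If $E = \emptyset$ the statement is trivial with $G = \emptyset$, so from now on $E \neq \emptyset$.)

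Next, set $d(x) := \dist(x, K)$, which is $1$-Lipschitz on $\Tn$, satisfies $\set{d = 0} = K$, and, by compactness of $K$, $\set{d \leq \rho} = K + \cl B_\rho(0)$ for all $\rho \geq 0$. Lifting $d$ to a $\Z^n$-periodic function on $\Rn$ and convolving with a standard mollifier supported in $B_\e(0)$, $\e := \de/8$, produces $d_\e \in C^\infty(\Tn)$ with $\norm{d_\e - d}_\infty \leq \e$ (because $d$ is $1$-Lipschitz). By Sard's theorem the regular values of $d_\e$ have full measure, so I would choose a regular value $c$ in the nonempty interval $(\e, \tfrac{\de}{2} - \e)$ and put $G := \set{d_\e < c}$. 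Since $c$ is a regular value, $\partial G \subseteq \set{d_\e = c}$ is a smooth embedded hypersurface, so $G$ is open with smooth boundary.

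Finally I would verify the two inclusions. On $K$ one has $d = 0$, hence $d_\e \leq \e < c$, so $\cl E^r = K \subseteq G$. Conversely $d \leq d_\e + \e$, so
\[
\cl G \subseteq \set{d_\e \leq c} \subseteq \set{d \leq c + \e} = K + \cl B_{c+\e}(0) \subseteq K + B_{3\de/4}(0) \subseteq \interior(E^{r+\de}),
\]
using $c + \e < \tfrac{\de}{2} < \tfrac{3\de}{4}$ together with the inclusion from the first step. The only mildly delicate point is that first inclusion, where the room provided by $\de$ is exactly what lets one pass from $\cl E$ back to $E$; everything else is the standard mollification-plus-Sard construction of a smooth domain wedged between two nested sets.
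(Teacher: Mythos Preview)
Your proof is correct and takes essentially the same approach as the paper: mollify a distance function and select a smooth sublevel set via Sard's theorem. The paper works with $\dist(\cdot, E)$ directly and chooses a regular value in $(r+\de/4,\, r+3\de/4)$ rather than passing through $K=\cl E^r$, but this is a cosmetic difference; your explicit verification of $K + B_\de(0) \subseteq E^{r+\de}$ is the same closure bookkeeping the paper handles in one line by the triangle inequality.
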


\begin{proof}
This is a simple consequence of the Sard's theorem.
First,
if $E^{r + \de} = \Tn$ (resp. $E = \emptyset$)
then we choose $G = \Tn$ (resp. $G = \emptyset$)
and we are done.
Thus we may assume that
$\emptyset \neq E \subset E^{r + \de} \subsetneq \Tn$.
Let $d(x) := \dist(x, E)$ and let $\phi_{\de/4}$
be the standard smooth mollifier with support in $B_{\de/4}$.
Let $g = d * \phi_{\de/4}$.
We observe that $g \in C^\infty$,
and, by triangle inequality, also
$g \leq r + \frac14 \de$ on $\cl E^r$ and $g \geq r + \frac34 \de$ on
$(\interior(E^{r + \de}))^c$.

Sard's theorem yields that $\abs{g(\set{x : \nabla g(x) = 0})} = 0$,
where $\abs{\cdot}$ stands for the one-dimensional
Lebesgue measure.
Therefore, there exists $\rho \in (r + \frac14 \de, r + \frac34 \de)$
such that $\nabla g \neq 0$ on $\set{g = \rho}$.
We set $G = \set{x: g(x) < \rho}$
and observe that $\partial G = \set{g = \rho} \in C^\infty$
by the implicit function theorem.
\qedhere\end{proof}

We continue with the construction of smooth pairs for $u$ and $v$,
both containing $\hat x$.
We set $r := \frac1{20} \la$ and consider the closed sets
\begin{align*}
Z &:= \cl{U \setminus V^{15r}}, & W &:= \cl{V \setminus U^{15r}}.
\end{align*}

By Lemma~\ref{le:smooth-nbd},
there are open sets
$G_i$, $H_i$, $i = u, v$,
with smooth boundary
such that
\begin{subequations}
\begin{align}
\label{AuBu}
Z^{3r} &\subset  G_u \subset \subset \interior (Z^{4r})
    \subset Z^{4r}
    \subset U^{5r} \subset H_u \subset \subset \interior (U^{6r}),\\
W^{3r} &\subset  G_v \subset \subset \interior (W^{4r})
    \subset W^{4r}
    \subset V^{5r} \subset H_v \subset \subset \interior (V^{6r}).
\end{align}
\end{subequations}
If $Z$ (resp. $W$) is empty then we take
$G_u = \emptyset$ (resp. $H_u =\emptyset$).

Because, by construction, $Z^{4r} \cap V^{6r} = \emptyset$ as well as
$W^{4r} \cap U^{6r} = \emptyset$, we also have
\begin{align}
\label{dist-of-facets}
\dist(G_u, H_v) > 0, \qquad \dist(H_u, G_v) > 0.
\end{align}

Our next task is to construct
admissible faceted test functions around $(\hat x, \hat t)$
and $(\hat x, \hat s)$
for $u$ and $v$, respectively,
with facets given by the smooth pairs
\begin{align}
\label{def-of-facets}
(\Omega_{-,u}, \Omega_{+,u}) := (H_u, G_u) \qquad
\text{and} \qquad
 (\Omega_{-,v}, \Omega_{+,v}) :=  (\interior G_v^c, \interior H_v^c).
\end{align}
Note that these smooth pairs satisfy the assumptions
of Theorem~\ref{th:monotonicity},
in particular, the facets are ordered due to \eqref{dist-of-facets}.

Consider the closed sets 
\begin{align*}
X &= \set{x : \dist(x, U) \geq r}, & Y &= \set{x : \dist(x, V) \geq r}.
\end{align*}
Since $u(\cdot, \hat t) < \al$ in $X$ and $v(\cdot, \hat s) > \be$ in $Y$,
and $u$ and $v$ are semicontinuous,
there exists $\tau > 0$ such that
\begin{subequations}
\begin{align}
\label{u-upper-strict-bound}
u(\cdot, t) &< \al + S(t, \hat s) - S(\hat t, \hat s)
    \quad \text{in } X
        \text{ for } t \in [\hat t- \tau, \hat t + \tau],\\
v(\cdot, s) &> \be + S(\hat t, \hat s) - S(\hat t, s)
    \quad \text{in } Y 
    \text{ for } s \in [\hat s- \tau, \hat s + \tau].
\end{align}
\end{subequations}
We note that if both $X$ and $Y$ are empty we choose some suitable small $\tau > 0$.
Therefore, together with Corollary~\ref{co:extended-ordering},
\begin{subequations}
\begin{align}
\label{lambda-extened-ordering}
u(\cdot, t) &\leq \al + S(t, \hat s) - S(\hat t, \hat s) \quad \text{in } X \cup V^\la \text{ for } t \in [\hat t- \tau, \hat t + \tau],\\
v(\cdot, s) &\geq \be + S(\hat t, \hat s) - S(\hat t, s) \quad \text{in } Y \cup U^\la \text { for } s \in [\hat s- \tau, \hat s + \tau].
\end{align}
\end{subequations}

\begin{lemma} The choice of the sets $G_i$ and $H_i$ guarantees that
\begin{subequations}
\begin{align}
\label{AuBuXY}
(G_u)^{-2r} \cup X \cup V^\la = (H_u)^{-r} \cup X = \Tn,\\
(G_v)^{-2r} \cup Y \cup U^\la = (H_v)^{-r} \cup Y = \Tn.
\end{align}
\end{subequations}
\end{lemma}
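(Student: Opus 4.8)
The plan is to reduce both displayed chains of equalities to two one-sided inclusions: since each of $(G_u)^{-2r}\cup X\cup V^\la$ and $(H_u)^{-r}\cup X$ is automatically a subset of $\Tn$, it suffices to prove
\[
\Tn \subseteq (H_u)^{-r}\cup X \qquad\text{and}\qquad \Tn \subseteq (G_u)^{-2r}\cup X\cup V^\la;
\]
these two inclusions together force all the listed sets to equal $\Tn$. The ``$v$'' row follows by literally the same argument with $(U,Z,G_u,H_u,X)$ replaced by $(V,W,G_v,H_v,Y)$. Throughout I would use two elementary facts about generalized neighborhoods of a \emph{closed} set $A\subseteq\Tn$ (recall $U$, $V$, $Z$, $W$ are closed because $u$, $v$ are semicontinuous): first $A^\rho=\set{x:\dist(x,A)\le\rho}$ for $\rho>0$, since the infimum defining $\dist(\cdot,A)$ is attained; and second, the triangle inequality for $\dist(\cdot,A)$ yields $A^{\sigma-\rho}\subseteq(A^\sigma)^{-\rho}$ for $\sigma\ge\rho>0$, while the erosion operation is monotone under set inclusion.

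For the first inclusion I would start from $U^{5r}\subseteq H_u$ (from \eqref{AuBu}) and apply the erosion relation with $\sigma=5r$, $\rho=r$ together with monotonicity of erosion to get $U^r\subseteq U^{4r}\subseteq(U^{5r})^{-r}\subseteq(H_u)^{-r}$. Since $U^r=\set{x:\dist(x,U)\le r}$ and $X=\set{x:\dist(x,U)\ge r}$ by definition, we have $U^r\cup X=\Tn$, hence $(H_u)^{-r}\cup X=\Tn$. For the second inclusion, I would first use $Z^{3r}\subseteq G_u$ (again from \eqref{AuBu}) and the erosion relation with $\sigma=3r$, $\rho=2r$ to get $Z^r\subseteq(Z^{3r})^{-2r}\subseteq(G_u)^{-2r}$. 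Then, given any $x\in\Tn$ with $x\notin X$ and $x\notin V^\la$ --- that is, $\dist(x,U)<r$ and $\dist(x,V)>\la=20r$ --- I would choose $u'\in U$ with $\abs{x-u'}<r$; the triangle inequality gives $\dist(u',V)>19r>15r$, so $u'\in U\setminus V^{15r}\subseteq Z$, whence $\dist(x,Z)<r$ and $x\in Z^r\subseteq(G_u)^{-2r}$. (When $Z=\emptyset$ the same computation shows no such $x$ can exist, so $X\cup V^\la=\Tn$ a fortiori.) Thus $(G_u)^{-2r}\cup X\cup V^\la=\Tn$.

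The argument is entirely elementary: neither Sard's theorem nor the smoothness of the $G_i$, $H_i$ plays any role here, and the only properties of the constructed sets invoked are the enclosures $Z^{3r}\subseteq G_u$ and $U^{5r}\subseteq H_u$ from \eqref{AuBu}. I expect the only (minor) obstacle to be careful bookkeeping of open versus closed generalized neighborhoods and the direction of the erosion inclusion, together with confirming that the numerology has slack --- the gap $\la-r=19r$ comfortably exceeds the $15r$ in the definition of $Z$, and $5r-r=4r\ge r$ --- which is precisely what the choice $r=\tfrac1{20}\la$ is engineered to provide.
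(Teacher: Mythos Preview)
Your proof is correct and follows essentially the same route as the paper's. The paper also reduces to $Z^r\subset(G_u)^{-2r}$ and $U^{4r}\subset(H_u)^{-r}$ via the erosion inclusion, and then runs the same triangle-inequality argument; the only cosmetic difference is that for the first equality the paper argues the contrapositive in the other direction (assuming $x\notin(G_u)^{-2r}\cup X$ and concluding $x\in V^\la$, with the bound $\dist(x,V)\le 16r$), whereas you assume $x\notin X\cup V^\la$ and land in $(G_u)^{-2r}$.
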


\begin{proof}
We will only show \eqref{AuBuXY}, the other equality
is analogous.
Suppose that there exists $x \notin (G_u)^{-2r} \cup X$.
We will show $x \in V^\la$.
From the definition of $X$ and from \eqref{AuBu}, it follows that
$Z^r \subset (G_u)^{-2r}$ and thus
\begin{align}
\label{dist-x-Z}
\dist(x, U) &< r, & \dist(x, Z) &= \dist(x, U \setminus V^{15r}) > r.
\end{align}
Therefore there exists $y \in U$ with $\abs{x - y} < r$,
and the second inequality in \eqref{dist-x-Z} implies that $y \in V^{15r}$.
The triangle inequality yields
\begin{align*}
\dist(x, V) \leq \abs{x - y} + \dist(y, V) \leq 16r \leq 20r = \la.
\end{align*}
The other equality in \eqref{AuBuXY}
follows from \eqref{AuBu}, which yields
$(H_u)^{-r} \cup X \supset U^{2r} \cup X = \Tn$.
\qedhere\end{proof}

Our goal is to construct admissible faceted test functions
that correspond to facets $(\Omega_{-,i}, \Omega_{+,i})$, $i = u, v$,
which are
in general position with respect to $u$ and $v$ at $(\hat x, \hat t)$.
Therefore, let $\psi_u$, $\psi_v$ be the support functions
constructed
in Proposition~\ref{pr:support-function}
for smooth pairs $(\Omega_{-,u}, \Omega_{+,u})$
and $(\Omega_{-,v}, \Omega_{+,v})$, respectively. 
We define 
\begin{align*}
\vp_u(x,t) &= \la_u \bra{\psi_u(x)}_+ - \mu_u \bra{\psi_u(x)}_- + \al + S(t,\hat s) - S(\hat t, \hat s),\\
\vp_v(x,s) &=  \la_v \bra{\psi_v(x)}_+ - \mu_v \bra{\psi_v(x)}_- + \be + S(\hat t, \hat s) - S(\hat t, s),
\end{align*}
where $[q]_\pm := \max( \pm q, 0)$.

\begin{lemma}
\label{le:order-with-shifts}
There are positive constants $\la_u$, $\mu_v$, chosen large enough,
and positive constants $\mu_u$, $\la_v$, chosen small enough,
so that
\begin{align*}
&u(x,t) \leq \vp_u(x - h, t)
    &&\text{for } x \in \Tn,\ t \in [\hat t - \tau, \hat t + \tau],
        \ h\in \cl B_r(0),
\intertext{and}
&\vp_v(x-h,t) \leq v(x,t)
    &&\text{for } x \in \Tn,\ t \in [\hat s - \tau, \hat s + \tau],
        \ h\in \cl B_r(0).
\end{align*}
\end{lemma}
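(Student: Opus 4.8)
The plan is to prove both inequalities by a case analysis on the position of $x$. Since the ball $\cl B_r(0)$ is symmetric, writing $x = y+h$ it is equivalent to show $u(y+h,t)\le\vp_u(y,t)$ for all $y\in\Tn$, $h\in\cl B_r(0)$, $t\in[\hat t-\tau,\hat t+\tau]$, and the analogous statement for $v$. I discuss only the bound for $u$; the bound for $v$ is obtained by the mirror argument (see Remark~\ref{re:support-function-symmetry}) and it is that argument that forces $\mu_v$ to be large and $\la_v$ small.

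First some preliminaries. Because the construction in Proposition~\ref{pr:support-function} is valid for every sufficiently small positive value of its parameter, I fix the parameter $\de$ there so that $\de\le 2r$. From the explicit formula for the support function, $\psi_u$ is Lipschitz with $\norm{\psi_u}_\infty\le\de/2$, one has $\set{\psi_u\ge 0}=\cl{H_u}$ (recall $(\Omega_{-,u},\Omega_{+,u})=(H_u,G_u)$), and $\psi_u(x)=\min(\dist(x,\partial G_u),\de/2)$ for $x\in G_u$, so in particular $\psi_u\equiv\de/2$ on $(G_u)^{-r}$. Set $\si(t):=S(t,\hat s)-S(\hat t,\hat s)$, $N:=\max_{[\hat t-\tau,\hat t+\tau]}\abs{\si(t)}$, $M_u:=\norm{u}_\infty$, and — using upper semicontinuity of $u$, compactness of $\set{\dist(\cdot,U)\ge 4r}$, and \eqref{u-upper-strict-bound} —
\[
c_u := -\max\set{u(x,t)-\al-\si(t)\;:\;\dist(x,U)\ge 4r,\ t\in[\hat t-\tau,\hat t+\tau]} > 0.
\]
I then take $\la_u:=\tfrac{2}{\de}(M_u-\al+N)$, which is large, and $\mu_u:=\tfrac{2}{\de}c_u$, which is small (any smaller positive value works as well). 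Recall $\vp_u(y,t)=\la_u[\psi_u(y)]_+-\mu_u[\psi_u(y)]_-+\al+\si(t)$.

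Now the case analysis, covering $\Tn$ by $(G_u)^{-r}$, $\cl{H_u}\setminus(G_u)^{-r}$, and $\Tn\setminus\cl{H_u}$. (i) On $(G_u)^{-r}$, $\psi_u(y)=\de/2$, so $\vp_u(y,t)\ge\la_u\de/2+\al-N=M_u\ge u(y+h,t)$. (ii) On $\cl{H_u}\setminus(G_u)^{-r}$, $\psi_u(y)\ge 0$, hence $\vp_u(y,t)\ge\al+\si(t)$, and it suffices to check $u(y+h,t)\le\al+\si(t)$; by \eqref{lambda-extened-ordering} this holds once $y+h\in X\cup V^\la$. If $y+h$ lay in $(G_u)^{-2r}$, then $\cl B_r(y)\subset\cl B_{2r}(y+h)\subset G_u$, i.e.\ $y\in(G_u)^{-r}$, contradicting the case; hence $y+h\notin(G_u)^{-2r}$ and $y+h\in X\cup V^\la$ by the first identity in \eqref{AuBuXY}. (iii) On $\Tn\setminus\cl{H_u}$, $-\de/2\le\psi_u(y)<0$, so $\vp_u(y,t)=\al+\si(t)+\mu_u\psi_u(y)\ge\al+\si(t)-\mu_u\de/2=\al+\si(t)-c_u$; on the other hand $U^{5r}\subset H_u$ by \eqref{AuBu} gives $\dist(y,U)>5r$, hence $\dist(y+h,U)>4r$, so $u(y+h,t)\le\al+\si(t)-c_u\le\vp_u(y,t)$ by the definition of $c_u$. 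This proves $u(x,t)\le\vp_u(x-h,t)$.

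The estimate for $v$ is entirely parallel: $\psi_v$ satisfies $\set{\psi_v\le 0}=\cl{H_v}$ and $\psi_v\equiv-\de/2$ on $(G_v)^{-r}$, and the same three cases — using boundedness of $v$ with $\mu_v:=\tfrac2\de(\norm{v}_\infty+\be+N')$ large deep inside $G_v$, where $N'$ bounds $\abs{S(\hat t,\hat s)-S(\hat t,s)}$; using \eqref{lambda-extened-ordering} and \eqref{AuBuXY} in the intermediate annulus; and using $\la_v:=\tfrac2\de c_v$ small outside $\cl{H_v}$, where $c_v>0$ is the uniform value, obtained from \eqref{u-upper-strict-bound} and lower semicontinuity of $v$ on the compact set $\set{\dist(\cdot,V)\ge 4r}$, such that $v(\cdot,s)\ge\be+S(\hat t,\hat s)-S(\hat t,s)+c_v$ there — yield $\vp_v(x-h,t)\le v(x,t)$ with $\mu_v$ large and $\la_v$ small. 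The only delicate point, and the main obstacle, is the bookkeeping with the morphological neighborhoods in case (ii): one must ensure that a shift of size $\le r$ never carries a point out of $(G_i)^{-2r}$ into a region not covered by $X\cup V^\la$ (resp.\ $Y\cup U^\la$), which is precisely what the chain of inclusions \eqref{AuBu}--\eqref{AuBuXY} was arranged to guarantee; everything else, including the compatibility of the choice $\de\le 2r$ (which makes $\psi_i$ already flat on $(G_i)^{-r}$) with the resulting sizes of $\la_u$ and $\mu_v$, is routine.
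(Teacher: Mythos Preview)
Your proof is correct and follows essentially the same three-case strategy as the paper, with only cosmetic differences: you decompose in the variable $y=x-h$ (cases $(G_u)^{-r}$, $\cl{H_u}\setminus(G_u)^{-r}$, $\Tn\setminus\cl{H_u}$) whereas the paper introduces $\vp^r(x,t)=\min_{|h|\le r}\vp_u(x-h,t)$ and decomposes in $x$ (cases $(X\cup V^\la)^c$, $(H_u)^{-r}\cap(X\cup V^\la)$, $((H_u)^{-r})^c$), and you fix the parameter $\de\le 2r$ in Proposition~\ref{pr:support-function} to get an explicit constant value $\psi_u\equiv\de/2$ on $(G_u)^{-r}$, while the paper leaves $\psi_u$ as given and invokes compactness of $\cl{M_1}\subset(G_u)^{-r}$ to obtain a uniform positive lower bound. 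Both arguments rest on the same inputs \eqref{u-upper-strict-bound}, \eqref{lambda-extened-ordering}, \eqref{AuBuXY} and the same bookkeeping with the morphological neighborhoods; the only thing to flag is that your extra constraint $\de\le 2r$ technically modifies the support functions $\psi_u,\psi_v$ that were fixed just before the lemma, but this is harmless since nothing downstream uses more than the support-function property and membership in $\mathcal D(\partial E)$.
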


\begin{proof}
We shall show the statement in the case of $\vp_u$.

Let us define $\vp^r(x,t) = \min_{\abs{h} \leq r} \vp_u(x -h, t)$.
It is easy to see that $\vp^r \in C(\cl Q)$
and $\vp^r(\cdot, t) - g(t)$
is a support function of
the (not necessarily smooth) pair $((H_v)^{-r}, (G_u)^{-r})$.
Moreover, the statement of the lemma is clearly equivalent to
$u \leq \vp^r$ for $x\in \Tn$, $t \in [\hat t - \tau, \hat t + \tau]$.

The space $\Tn$ can be divided into three parts
with the help of observation \eqref{AuBuXY}:

\begin{enumerate}
\item $M_1 = (X \cup V^\la)^c$.
    Since $\cl{M_1} \subset (\cl G_u)^{-2r} \subset (G_u)^{-r}$
        by \eqref{AuBuXY},
        $\psi_u > 0$ in $G_u$ and $(G_u)^{-r}$ is open,
      continuity yields 
      $\min_{\abs{h} \leq r} \min_{\cl{M_1}+h} \psi_u > 0$
      and we can choose $\la_u$ large enough to satisfy $u \leq \vp^r$
      in $\cl{M_1} \times [\hat t - \tau, \hat t + \tau]$
      due to boundedness of $u$.

\item $M_2 = (H_u)^{-r} \setminus M_1 = (H_u)^{-r} \cap (X \cup V^\la)$.
      Observe that $\psi_u \geq 0$ on $H_u$
      by construction.
      Therefore $u \leq \vp^r$
      on $M_2 \times [\hat t - \tau, \hat t + \tau]$
      due to \eqref{lambda-extened-ordering}.

\item $M_3 = \pth{(H_u)^{-r}}^c$.
      We can choose $\mu_u$ small enough so that 
\begin{align*}
- \mu_u \norm{\psi_u}_\infty 
    \geq \max_{X \times [\hat t -\tau, \hat t + \tau]}
            u(x,t) - a - S(t, \hat s) + S(\hat t, \hat s)
\end{align*}
because the right-hand side is negative
thanks to \eqref{u-upper-strict-bound}.
Since $M_3 \subset X$ by \eqref{AuBuXY},
we see that $u \leq \vp^r$ with this choice of $\mu_u$
on $M_3 \times [\hat t -\tau, \hat t + \tau]$.
\end{enumerate}

The result for $\vp_v$ can be obtained similarly.
\qedhere\end{proof}

Let us recall the definition of smooth pairs
$(\Omega_{-,i}, \Omega_{+,i})$
in \eqref{def-of-facets}.
Now, it is clearly true, by definition,
that $\vp_u(\hat x, \hat t) = u(\hat x, \hat t)$
and $\vp_v(\hat x, \hat s) = v(\hat x, \hat s)$.
Next,
we give a simple argument to show that
$\cl B_r(\hat x) \subset H_u \setminus \cl G_u$
and $\cl B_r(\hat x) \subset H_v \setminus \cl G_v$.
This is immediate,
by recalling the obvious fact $\hat x \in U \cap V$ 
and its consequence
$\cl B_r(\hat x) \subset U^r \cap V^r \subset H_u \cap H_v$,
which, together with \eqref{dist-of-facets}, yield the claim.
Consequently, by construction, for all $\abs{z} \leq r$,
$\vp_u$ is an admissible faceted test function at $(\hat x + z, \hat t)$
with smooth pair
$(\Omega_{-,u}, \Omega_{+,u})$
and $\vp_v$ is an admissible faceted test function at
$(\hat x + z, \hat t)$ with smooth pair
$(\Omega_{-,v}, \Omega_{+,v})$.

Therefore Lemma~\ref{le:order-with-shifts}
yields that
$\vp_u$ is in general position of radius $r$ with
respect to $u$ at $(\hat x, \hat t)$,
and $-\vp_v$ is in general position of radius $r$
with respect to $-v$ at $(\hat x, \hat s)$.

By definition of viscosity solutions,
we must have for some $\de \in (0, r)$
\begin{align*}
S_t(\hat t,\hat s) + F(0,
    \essinf_{B_\de(\hat x)} \Lambda(\Omega_{-,u}, \Omega_{+,u})) \leq 0 
\end{align*}
and
\begin{align*}
-S_s(\hat t,\hat s)
    + F(0, \esssup_{B_\de(\hat x)}
        \Lambda(\Omega_{-,v}, \Omega_{+,v})) \geq 0. 
\end{align*}
Subtracting these two inequalities, we get
\begin{align*}
0 < \frac{\gamma}{(T - \hat t)^2} + \frac{\gamma}{(T - \hat s)^2}
\leq F(0, \esssup_{B_\de(\hat x)}
        \Lambda(\Omega_{-,v}, \Omega_{+,v}))
- F(0, \essinf_{B_\de(\hat x)} \Lambda(\Omega_{-,u}, \Omega_{+,u})).
\end{align*}
Since $\Omega_{\pm,u} \subset\Omega_{\pm,v}$,
the facets are ordered and
the comparison principle Theorem~\ref{th:monotonicity} implies 
\begin{align*}
\essinf_{B_\de(\hat x)} \Lambda(\Omega_{-,u}, \Omega_{+,u})
\leq \esssup_{B_\de(\hat x)} \Lambda(\Omega_{-,v}, \Omega_{+,v}),
\end{align*}
and we get a contradiction due to
the degenerate ellipticity of $F$.

\subsection{Case II}

Since this case is quite classical,
similar to \cite[7.\S G]{GG98ARMA},
we just give a short outline
of the proof.

Let us first recall that
there exist $\e, \si, \ga$, $\zeta$, $\abs\zeta < \kappa(\e)$,
and $(z, z') \in \argmax_{\cl Q \times \cl Q} \Phi_\zeta$
such that $x - y \neq \zeta$.

Using the special structure of $\Phi_\zeta$,
we can proceed as in the proof of \cite[Theorem~3.1]{OhnumaSato97},
using the elliptic result \cite[Theorem~3.2]{CIL}
that is then extended to the parabolic result in
\cite[Corollary~3.6]{OhnumaSato97},
and
we obtain symmetric matrices $X, Y \in \mathcal{S}^n$, $X \leq Y$,
such that
\begin{align*}
    (\hat\Phi_{\zeta,t}, \hat\Phi_{\zeta,x}, X)
            &\in \cl{\mathcal{P}^{2,+}}(u(\hat x, \hat t)),
            &
    (-\hat\Phi_{\zeta,s}, -\hat\Phi_{\zeta,y}, Y)
            &\in \cl{\mathcal{P}^{2,-}}(v(\hat y, \hat s)).            
\end{align*}
Here $\cl{\mathcal{P}^{2,\pm}}$ are the closures of
the second order parabolic semijets, as defined in \cite[\S8]{CIL}.

Let us observe that
\begin{align*}
p := 
    \hat\Phi_{\zeta,x} = -\hat\Phi_{\zeta,y}
    = \frac{\hat x - \hat y - \zeta}{\e}
    \neq 0,
\end{align*}
where $p \in \Rn$ has to be interpreted
as the element of the equivalency class
$\hat x - \hat y - \zeta + \Z^n$ with the smallest norm.

Therefore, the definition of the viscosity solution
implies
\begin{align*}
\hat\Phi_{\zeta,t} + F(p, k(p, X)) &\leq 0,\\
-\hat\Phi_{\zeta,s} + F(p,k(p, Y)) &\geq 0.
\end{align*}
Summing the inequalities, we arrive at
\begin{align*}
    \hat\Phi_{\zeta,t} + \hat\Phi_{\zeta,s}
    \leq 
    F(p, k(p, Y)) - F(p, k(p, X)).
\end{align*}
The right-hand side is nonpositive due to
the ellipticity of $F$, $X \leq Y$ and
the formula \eqref{Lambda-nondeg}.
We have
\begin{align*}
0< \frac{\gamma}{(T - \hat t)^2} + \frac{\gamma}{(T - \hat s)^2} \leq 0,
\end{align*}
a contradiction.

\bigskip
The proof of the comparison principle, Theorem~\ref{th:comparison},
is now complete. \qed

\section{Existence of solutions by stability}
\label{sec:existence}

We shall prove the existence of solutions of \eqref{tvf}
via an approximation by parabolic problems
\begin{align}
\label{approximate-problem}
\begin{cases}
    u_t + F(\nabla u, -\partial^0 E_m(u(\cdot, t))) = 0,\\
    \at{u}{t=0}= u_0,
\end{cases}
\end{align}
where $E_m$ are some smooth energies approximating $E$.
For given $m> 0$,
we choose the energy $E_m$
as a smooth approximation of the total variation 
energy,
given as
\begin{align*}
E_m(\psi) :=
\begin{cases}
\int W_m(\nabla \psi) \dx &
    \text{for $\psi \in H^1(\T^n)$},\\
+\infty & \text{for } \psi \in L^2(\T^n) \setminus H^1(\T^n),
\end{cases}\\
\text{where} \quad
W_m(p) := (\abs{p}^2 + m^{-2})^{1/2} + m^{-1} \abs{p}^2.
\end{align*}
The main features of the approximation are summarized
in the following proposition.
Let us point out that other approximations are possible,
as long as $W_m$ form a decreasing sequence
of smooth, uniformly convex, radially symmetric functions
converging to $W(p) = \abs{p}$ pointwise as $m\to\infty$.

\begin{proposition}
\label{pr:Em-properties}
\begin{inparaenum}[(a)]
\item
$E_m$ form a decreasing sequence of
proper convex lower semi-continuous functionals
and
$E = \pth{\inf_m E_m}_*$,
the lower semi-continuous envelope of $\inf_m E_m$ in $L^2(\Tn)$.

\item
The subdifferential $\partial E_m$ 
is a singleton for all $\psi \in \mathcal{D}(\partial E_m) = H^2(\T^n)$
and its canonical restriction can be expressed as
\begin{align}
\label{approximate-operator}
    -\partial^0 E_m(\psi)
    = \divo \bra{(\nabla_p W_m)(\nabla \psi)}
    = \divo \bra{
        \frac{\nabla \psi}{(\abs{\nabla \psi}^2 + m^{-2})^{1/2}}} 
        + m^{-1} \Delta \psi
        \quad \text{a.e.}
\end{align}

\item
Due to the ellipticity of $F$,
if $-\partial E_m(\psi)$ is interpreted as \eqref{approximate-operator},
the problem \eqref{approximate-problem} is a degenerate parabolic
problem that has a unique global viscosity solution
for given continuous initial data $u_0 \in C(\T^n)$.
\end{inparaenum}
\end{proposition}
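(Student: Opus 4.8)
The plan is to establish the three assertions separately: (a) and (b) are essentially convex analysis plus standard elliptic regularity, while (c) reduces the approximate problem to the classical viscosity theory for non-singular degenerate parabolic equations.

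\textbf{Part (a).} Since $W_m(p) = (\abs p^2 + m^{-2})^{1/2} + m^{-1}\abs p^2$ is a pointwise decreasing sequence in $m$ converging to $W(p) = \abs p$, with each $W_m$ smooth, convex (a sum of convex functions) and of superlinear growth, I would first record that each $E_m$ is proper (e.g.\ $E_m(0) < \infty$) and convex on $L^2(\Tn)$ — finite and convex on the convex set $H^1(\Tn)$ and $+\infty$ on the $m$-independent complement — so the sequence is decreasing on all of $L^2(\Tn)$. Lower semicontinuity on $L^2(\Tn)$ comes from the quadratic term: if $\psi_k \to \psi$ in $L^2$ with $\liminf E_m(\psi_k) < \infty$, then $m^{-1}\norm{\grad\psi_k}_{L^2}^2 \le E_m(\psi_k)$ forces, along a subsequence, $\psi_k \to \psi$ weakly in $H^1(\Tn)$, and weak lower semicontinuity of the convex integrand applies. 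For the relaxation identity, $E \le (\inf_m E_m)_*$ is immediate from $W \le W_m$ (so $E \le E_m$, hence $E \le \inf_m E_m$) together with the lower semicontinuity of $E$; for the converse, given $\psi \in BV(\Tn) \cap L^2(\Tn)$ I would take a smooth sequence $\psi_k \to \psi$ in $L^2$ with $\int_\Tn \abs{\grad\psi_k} \to \abs{D\psi}(\Tn) = E(\psi)$ (strict approximation of $BV$ functions, see e.g.\ \cite{ACM04}) and pick $m_k \to \infty$ slowly enough that $m_k^{-1}\bigl(\abs{\Tn} + \norm{\grad\psi_k}_{L^2}^2\bigr) \to 0$, which yields $(\inf_m E_m)_*(\psi) \le \liminf_k E_{m_k}(\psi_k) \le E(\psi)$; if $\psi \notin BV(\Tn)$, a finite value of $(\inf_m E_m)_*(\psi)$ would produce an $L^2$-approximating sequence of uniformly bounded total variation, forcing $\psi \in BV(\Tn)$, so both sides are $+\infty$.

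\textbf{Part (b).} Here I would use that $E_m$ is Gâteaux differentiable on $H^1(\Tn)$, with $\langle DE_m(\psi), h \rangle = \int_\Tn (\grad_p W_m)(\grad\psi) \cdot \grad h$, the integral converging since $\abs{(\grad_p W_m)(p)} \le C(1 + \abs p)$. For a convex functional this forces $\partial E_m(\psi)$ to be a singleton whenever nonempty, namely $\set{-\divo\bra{(\grad_p W_m)(\grad\psi)}}$, and nonemptiness is equivalent to this distributional divergence lying in $L^2(\Tn)$. It then remains to identify $\set{\psi \in H^1 : \divo\bra{(\grad_p W_m)(\grad\psi)} \in L^2}$ with $H^2(\Tn)$. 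The inclusion $\supseteq$ is the chain rule: $\grad_p W_m$ has bounded Lipschitz derivative (the Hessian of $(\abs p^2 + m^{-2})^{1/2}$ is bounded, and the quadratic term contributes a constant multiple of the identity), so $(\grad_p W_m)(\grad\psi) \in H^1$ once $\grad\psi \in H^1$. For $\subseteq$ I would invoke the standard interior $W^{2,2}$ estimate for uniformly elliptic divergence-form equations (see e.g.\ \cite{Evans}): since $W_m$ is uniformly convex with bounded second derivatives, difference quotients of $\psi$ satisfy a linear uniformly elliptic equation with bounded measurable coefficients and $L^2$ right-hand side, and a Caccioppoli-type estimate bounds $\grad^2\psi$ in $L^2$ (no boundary terms on the torus). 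Formula \eqref{approximate-operator} is then the chain-rule expansion of $\divo\bra{(\grad_p W_m)(\grad\psi)}$, and $\partial^0 E_m = \partial E_m$ because the subdifferential is a singleton.

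\textbf{Part (c).} The point is that, once $-\partial^0 E_m(u)$ is read as the differential expression in \eqref{approximate-operator}, the problem \eqref{approximate-problem} becomes $u_t + G_m(\grad u, \grad^2 u) = 0$ with
\begin{align*}
A_m(p, X) &:= \frac{1}{(\abs p^2 + m^{-2})^{1/2}} \trace\bra{X\pth{I - \frac{p \otimes p}{\abs p^2 + m^{-2}}}} + m^{-1}\trace X,\\
G_m(p, X) &:= F\pth{p,\, A_m(p, X)}.
\end{align*}
Because $m^{-2} > 0$, the denominators never vanish, so $A_m$ — hence $G_m$ — is genuinely continuous on $\Rn \times \mathcal{S}^n$, in particular at $p = 0$ where $A_m(0, X) = (m + m^{-1})\trace X$; thus there is no singularity at $\grad u = 0$ and the faceted test functions play no role. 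Moreover $I - p \otimes p/(\abs p^2 + m^{-2}) \succeq 0$, so $A_m(p, \cdot)$ is nondecreasing in the Loewner order, and together with the degenerate ellipticity \eqref{degenerate-ellipticity} of $F$ this makes $G_m(p, \cdot)$ nonincreasing; hence $u_t + G_m(\grad u, \grad^2 u) = 0$ is degenerate parabolic in the standard sense, with $G_m$ continuous and independent of $x$, $t$ and $u$. I would then invoke the classical theory (see \cite{CIL} and \cite{Giga06}): the comparison principle on $\Tn \times (0, T)$ follows from the standard doubling-of-variables argument — the gradient-type arguments of $G_m$ for the sub- and supersolution coincide at the doubled maximum while monotonicity in $X$ handles the Hessians, exactly as in ``Case~II'' of the proof of Theorem~\ref{th:comparison} with $F(p, k(p, X))$ replaced by $G_m(p, X)$, no boundary conditions being needed on the compact torus — and global existence of a continuous viscosity solution for $u_0 \in C(\Tn)$ follows from Perron's method together with barriers built from the modulus of continuity of $u_0$.

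\textbf{Main obstacle.} The genuinely nontrivial points are the relaxation identity in (a), where the diagonal choice of $m_k$ against the strict $BV$ approximation must be arranged carefully, and the $H^2$ characterization of $\dom(\partial E_m)$ in (b) via elliptic regularity; everything in (c) is a routine verification whose only deep input is the citation to the classical comparison and Perron machinery.
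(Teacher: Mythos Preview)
Your proof is correct and follows essentially the same route as the paper, which is quite terse: it delegates (b) to \cite{Evans} and (c) to \cite{CIL}, and gives a short argument for (a). One small simplification the paper uses in (a) that you might adopt: rather than choosing $m_k$ diagonally against the strict $BV$ approximants $\psi_k$, the paper observes that on $H^1(\Tn)$ one has $\inf_m E_m(\psi) = E(\psi)$ directly by monotone convergence (since $W_m(\nabla\psi) \downarrow \abs{\nabla\psi}$ a.e.), so for smooth $\psi_k$ already $(\inf_m E_m)(\psi_k) = E(\psi_k) \to E(\psi)$ and no diagonal argument is needed --- which removes precisely the step you flagged as the main obstacle.
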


\begin{proof}
Clearly $W_m$ is a sequence of
decreasing functions,
and hence $E_m$ is a decreasing sequence of functionals.
The proof of lower semi-continuity and convexity of $E_m$,
together with the characterization of the subdifferential in (b),
can be found in \cite{Evans}.

To prove $\pth{\inf_m E_m}_* = E$, let us denote $F = \inf_m E_m$.
We need to show that
$E(\psi) = F_*(\psi) = \liminf_{\psi_k \to \psi} F (\psi_k)$
for all $\psi \in L^2$.
First, let us assume that $\psi \in H^1(\Tn)$.
Then $E_m(\psi) < \infty$;
moreover, since $W_m(D\psi) \downarrow W(D\psi)$ a.e.,
the monotone convergence theorem implies
$E(\psi) = \inf E_m(\psi) = F(\psi)$.
On the other hand, if $\psi \notin H^1$ then $E_m(\psi) = \infty$
for all $m$ and hence $F(\psi) = \infty$.
We therefore conclude that $E \leq F$,
and, recalling that $E$ is l.s.c. in $L^2$, we also see that $E \leq F_*$.
Let now $\psi \in BV(\Tn)$.
There exists a sequence $\psi_k \in C^\infty(\Tn) \subset H^1(\Tn)$
such that $\psi_k \to \psi$ in $L^2$
and $F(\psi_k) = E(\psi_k) \to E(\psi)$;
see \cite[Theorem B.3]{ACM04}.
Therefore 
\begin{equation*}
E(\psi) \leq F_*(\psi)
    \leq \liminf_k F(\psi_k) = \liminf_k E(\psi_k) = E(\psi),
\end{equation*}
which concludes the proof of (a).

For (c) see \cite{CIL}.
\qedhere\end{proof}

In the following two subsections
we prove that the limit of solutions of \eqref{approximate-problem}
is the unique viscosity solution of \eqref{tvf}
with the same initial data.
First, we establish a key stability property of subsolutions
and supersolutions of \eqref{approximate-problem},
which we then follow by a construction of barriers near the initial data
that prevent the formation of an initial boundary layer
in the limit.

\subsection{Stability}

Let us recall the definition of the half-relaxed limits
\begin{align*}
\halflimsup_{m\to\infty} u_m(x,t) &:= 
    \lim_{k\to\infty} \sup_{m \geq k} \sup_{\substack{\abs{y - x} \leq \ov k\\\abs{s - t} \leq \ov k}} u_m(y,s), \\ 
    \halfliminf_{m\to\infty} u_m(x,t) &:= 
    \lim_{k\to\infty} \inf_{m \geq k} \inf_{\substack{\abs{y - x} \leq \ov k\\\abs{s - t} \leq \ov k}} u_m(y,s).
\end{align*}
We have the following stability property.

\begin{theorem}[Stability]
\label{th:stability}
Let $u_m$ be a sequence of subsolutions of \eqref{approximate-problem}
on $\Tn \times [0,\infty)$,
and let $\ou = \halflimsup_{m\to\infty} u_m$.
Assume that $\ou < +\infty$ in $\Tn \times [0,\infty)$.
Then $\ou$ is a subsolution of \eqref{tvf}.

Similarly, $\underline u = \halfliminf_{m\to\infty} u_m$
is a supersolution of \eqref{tvf}
provided  that $u_m$ is a sequence of supersolutions of
\eqref{approximate-problem} and $\underline u > -\infty$.
\end{theorem}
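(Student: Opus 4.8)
The plan is to prove the subsolution case; the supersolution statement follows by applying the result to $-u_m$, which are subsolutions of the problem with $F$ replaced by $\tilde F(p,\xi) = -F(-p,-\xi)$ (still degenerate elliptic), using the sign conventions in Definition~\ref{def:Lambda-subsol} and Remark~\ref{re:support-function-symmetry}. So fix a test function and a contact point for $\ou$; I must verify the two conditions (i) and (ii) of Definition~\ref{def:Lambda-subsol}. Condition (ii), the conventional test where $\nabla\vp(\hat x,\hat t)\neq 0$, is entirely classical: it is exactly the stability of viscosity solutions under half-relaxed limits for the local degenerate parabolic operator $F(\nabla\vp, k(\nabla\vp,\nabla^2\vp))$, which by Proposition~\ref{pr:Em-properties}(b) is the pointwise limit of the smooth operators $F(\nabla\vp, \divo[(\nabla_p W_m)(\nabla\vp)])$ whenever $\nabla\vp\neq 0$; one uses a standard argument perturbing $\vp$ to a strict maximum and passing to the limit. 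I would record this and move on.

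The substance is condition (i). Suppose $\vp(x,t)=f(x)+g(t)$ is an admissible faceted test function at $(\hat x,\hat t)$ with smooth pair $(\Omega_-,\Omega_+)$, in general position of radius $\eta$ with respect to $\ou$. First I would upgrade $f$: by Remark~\ref{re:support-scal-invar} and Proposition~\ref{pr:support-function} I may assume $f = \al[\psi]_+ - \mu[\psi]_-$ for the canonical support function $\psi$ of the pair, with $\al,\mu>0$ free to choose, and $f\in\mathcal D(\partial E)$. The key device is the resolvent: set $f_{a} = (I + a\partial E)^{-1} f$ and, to get something usable as a test function for the smooth problem~\eqref{approximate-problem}, further set $f_{a,m} = (I + a\partial E_m)^{-1} f$. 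By Proposition~\ref{pr:resolvent-convergence}, $(f_a - f)/a \to -\partial^0 E(f)$ in $L^2$ as $a\to 0$, and $-\partial^0 E(f) = \Lambda(\Omega_-,\Omega_+)$ a.e. on the facet $D$ by Proposition~\ref{pr:support-function}. One also needs that $f_{a,m}\to f_a$ as $m\to\infty$ (resolvent convergence from Mosco/graph convergence of $\partial E_m \to \partial E$, which follows from Proposition~\ref{pr:Em-properties}(a)), and that $f_{a,m}\in H^2 = \mathcal D(\partial E_m)$ is smooth enough to serve as a spatial test profile, with $(f_{a,m}-f)/a = -\partial^0 E_m(f_{a,m}) = \divo[(\nabla_p W_m)(\nabla f_{a,m})]$ by Proposition~\ref{pr:Em-properties}(b). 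Then I would build the test function $\vp_{a,m}(x,t) = \al_{a,m}[f_{a,m}-f]_+ \cdot(\text{something}) + \dots$; more simply, use $f_{a,m}$ itself, possibly after a harmless vertical rescaling and adding $g$, as a test function touching $u_m$ from above near $(\hat x,\hat t)$: since $f_{a,m}\to f$ uniformly (resolvents are nonexpansive on $L^\infty$ here, via Theorem~\ref{th:resolvent-comparison}) and $u_m$ approaches $\ou$ in the half-relaxed sense, and $\vp$ is in \emph{general position} — which gives room to absorb the $O(\|f_{a,m}-f\|)$ error by the strict gap at radius $\eta$ — there is a sequence of contact points $(x_m,t_m)\to(\hat x,\hat t)$ where $u_m - \vp_{a,m}$ has a local max. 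Applying the definition of viscosity subsolution of~\eqref{approximate-problem} there yields
\begin{align*}
g'(t_m) + F\!\left(\nabla f_{a,m}(x_m),\ \tfrac{f_{a,m}(x_m) - f(x_m)}{a}\right) \leq 0.
\end{align*}
Now let $m\to\infty$ then $a\to 0$. Here I would use that $\nabla f_{a,m}(x_m)\to 0$ (the profile flattens on the facet as $a\to 0$, since $f\equiv 0$ on $D$ and $\hat x\in\interior D$), and — this is the crucial estimate — that the second argument, after taking suitable ess-inf over a small ball $B_\de(\hat x)$, converges to $\essinf_{B_\de(\hat x)}\Lambda(\Omega_-,\Omega_+)$; this is where the $L^2$ (a.e. along a subsequence) convergence $(f_{a,m}-f)/a\to\Lambda$ on $D$ is converted into the pointwise ess-inf appearing in Definition~\ref{def:Lambda-subsol}(i), using continuity and monotonicity of $F$ in the second slot together with Fatou-type bounds. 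The outcome is $g'(\hat t) + F(0,\ \essinf_{B_\de(\hat x)}\Lambda(\Omega_-,\Omega_+))\leq 0$ for some $\de>0$, which is exactly~(i).

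The main obstacle, as flagged in the introduction, is the discrepancy between the admissible test functions of the limit problem and those of the smooth problems: $f_{a,m}$ is $H^2$ but its gradient vanishes on a large set, so it is not a conventional test function, and one must justify that it legitimately tests $u_m$ and that the operator value $\divo[(\nabla_p W_m)(\nabla f_{a,m})]$ at (or essentially-infimized near) the moving contact point $x_m$ controls $\Lambda$ in the right way. Concretely the delicate points are: (1) the double limit $m\to\infty$, $a\to 0$ must be taken in a coordinated fashion (e.g. choose $a = a(m)\to 0$ slowly) so that all the convergences above hold simultaneously and the general-position room of fixed size $\eta$ survives; (2) controlling $\nabla f_{a,m}$ near $x_m$ — one wants it small, which should follow because $f$ is Lipschitz, flat on $D$, and the resolvent is a contraction, but it requires care near $\partial\Omega_\pm$ where $\nabla f\neq 0$; and (3) passing from a.e./$L^2$ convergence of $(f_{a,m}-f)/a$ to the essential infimum on $B_\de(\hat x)$, which I expect to handle by choosing the ball radius $\de$ and the mollification/perturbation scales compatibly and invoking the lower semicontinuity of ess-inf under $L^2$ convergence along subsequences. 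Everything else — the classical case (ii), the reduction of the supersolution case, and the algebra of $\vp=f+g$ — is routine.
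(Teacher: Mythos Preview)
Your overall architecture is the paper's: case (ii) is classical, and for case (i) you perturb the faceted test function via the resolvent $f_{a,m}=(I+a\partial E_m)^{-1}(\cdot)$, test $u_m$ against $f_{a,m}+g$, and pass to the limit using Mosco/graph convergence and Proposition~\ref{pr:resolvent-convergence}. That part is right. However, two of the ``delicate points'' you flag are not merely delicate; as stated, your plan does not handle them, and the paper introduces two devices you are missing.

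\emph{Gradient control.} You claim $\nabla f_{a,m}(x_m)\to 0$ because ``the profile flattens on the facet.'' The resolvent contraction only gives $\|\nabla f_{a,m}\|_\infty\le \|\nabla f\|_\infty$, which is not small; and since the contact point need not stay in the interior of the facet (see below), you cannot use local flatness either. The paper fixes this by inserting an extra scalar parameter $\e\in(0,1)$ and solving the resolvent with right-hand side $\e f$ rather than $f$; then $\|\nabla f_{a,m}\|_\infty\le \e\|\nabla f\|_\infty$ globally, the subsequential limit $p_a$ of $\nabla f_{a,m}(x_{a,m})$ satisfies $|p_a|\le \e L$, and only after the $m\to\infty$ and $a\to0$ limits does one send $\e\to0$ to force $p=0$.

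\emph{From a pointwise value to $\essinf$.} The inequality you obtain involves $h_{a,m}(x_m)=(f_{a,m}(x_m)-\e f(x_m))/a$ at a single moving point. To reach $F(0,\essinf_{B_\de(\hat x)}\Lambda)\le 0$ via the ellipticity of $F$ you need an \emph{upper} bound on $h_{a,m}(x_m)$ that tends to $\essinf_{B_\de(\hat x)}\Lambda$; lower semicontinuity of $\essinf$ under $L^2$ convergence does not give this, because the problem is not passing $\min h_a\to\essinf\Lambda$ (that step is fine) but getting from the pointwise value at $x_m$ to $\min_{\cl B_\de(\hat x)}h_a$ in the first place. The paper's mechanism is a spatial shift: one chooses $z$ with $|z|\le\de$ so that $f_a(\hat x-z)=\min_{\cl B_\de(\hat x)}f_a$, tests $u$ against $f_a(\cdot-z)+g$, and then proves (this is Lemma~\ref{cl:h-at-xa}) that at any maximizer $(x_a,t_a)$ one has $h_a(x_a-z)\le \min_{\cl B_\de(\hat x)}h_a$. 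The proof of that lemma uses both the maximizing property and the structural fact (Lemma~\ref{le:simple-facet-order}, a consequence of general position) that $f\ge0$ on $U^\eta$ and $u\le g+\text{const}$ on $Z^\eta$. Your sketch has no analogue of this shift, and your assertion that $(x_m,t_m)\to(\hat x,\hat t)$ is incorrect: general position gives strictness only \emph{away from the facet}, so the contact points are trapped merely in a neighborhood $N_\de$ of the facet (Lemma~\ref{cl:strictish-maximumum}), not near $\hat x$; indeed in the paper $(x_{a,m},t_{a,m})\to(x_a,t_a)$ with $x_a$ possibly far from $\hat x$ on the facet.

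Finally, there is no need to couple the limits as $a=a(m)$; the paper takes $m\to\infty$ with $a$ fixed (uniform convergence $f_{a,m}\rightrightarrows f_a$, $h_{a,m}\rightrightarrows h_a$), then $a\to0$, then $\e\to0$, each limit clean. The coupled limit you propose would make the gradient and pointwise-value issues harder, not easier.
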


Before proceeding with the proof of the stability theorem,
let us first give a motivation and a brief outline.
Intuitively, the main challenge in the proof of stability
lies in the different natures of the nonlocal limit problem \eqref{tvf}
and the local approximate problems \eqref{approximate-problem}.

The basic idea of the standard stability proof
in the theory of viscosity solutions is the observation
that if $\ou - \vp$ has a strict local maximum at a point
$(\hat x, \hat t)$,
then $u_m - \vp$ will have a local maximum at a nearby point
$(x_m, t_m)$,
for a subsequence of $m$.
Using the knowledge that $u_m$ is a subsolution of
the approximate problem,
classical maximum principle arguments provide
information on $\vp$ at $(x_m,t_m)$, and
the proof is concluded by taking the limit $m \to \infty$.

However, the local nature of the approximate problem
allows us to recover only local information 
about the test function $\vp$ at $(\hat x, \hat t)$.
Therefore, in the current setting, this direct argument works only
as long as $\nabla \vp(\hat x, \hat t) \neq 0$.

When $\vp$ is an admissible faceted test function
at $(\hat x, \hat t)$,
in particular $\nabla \vp = 0$ in the neighborhood of $(\hat x, \hat t)$,
the viscosity condition on $\ou$ is formulated
via the inherently nonlocal quantity
$\Lambda(\Omega_-, \Omega_+)(\hat x)$.
Fortunately,
it is possible to encode the missing nonlocal information
into the test function itself by means of a small perturbation
of $\vp$ depending on $m$ (and other extra parameters),
as long as the perturbation can be shown to vanish uniformly
when taking the various limits.
The perturbed test function method was pioneered
in the theory of viscosity solutions by Evans \cite{Evans89},
where it was applied to the homogenization of an elliptic
problem and thus the choice of perturbation was
completely different.

The perturbation $\vp_{a,m}(x,t) = f_{a,m}(x) + g(t)$
of $\vp(x,t) = f(x) + g(t)$
is motivated by the interpretation of
$\divo (\nabla f/\abs{\nabla f})$
as the $L^2$-gradient of the total variation energy
in the gradient flow \eqref{2.4} with initial data $f$.
The evolution of the gradient flow can be approximated
by the implicit Euler scheme with finite time-step $a > 0$,
whose first time-step is in fact the solution $f_a$ of the resolvent problem
\eqref{resolvent-problem}.
The resolvent problem is, however, conveniently approximated
by the solution $f_{a,m}$ of the resolvent problem for energy $E_m$.
Moreover, the nonlocal nature of resolvent problems
provides the missing information about the shape of the facet of $\vp$.

We collect the relevant features of the approximation by resolvent problems
in Proposition~\ref{pr:resolvents-regularity} below.
The statement contains an extra parameter $\e$
which will be used throughout the proof of stability
to rigorously justify that the limit problem does not depend
on $\nabla\vp$ outside of the facet.

The proof of stability for subsolutions
proceeds roughly in the following manner:
\begin{inparaenum}[1)]
\item
as test functions with $\nabla \vp \neq 0$ can be handled
by the classical stability proof,
we restrict our attention to admissible faceted test functions
$\vp(x,t) = f(x) + g(t)$
such that $\ou - \vp$ has a maximum at $(\hat x,\hat t)$ on a facet;
\item
we show that it is possible to
assume that the set of points of maxima can be restricted
to a small neighborhood of the facet;
this is a consequence of $\vp$ being in general position
with respect to $u$;
\item
we perturb the test function by solving
the resolvent problems for energy $E_m$ 
with ``time-step'' $a$ and right-hand side $\e f$;
\item
after using the classical maximal principle arguments
to infer information about the action of operator 
\eqref{approximate-operator} on $\vp_{a,m}$,
we sent $m \to \infty$, which provides
information about the action of the operator \eqref{tvf}
on $\vp_a$;
finally,
\item
sending $a \to 0$ and $\e \to 0$
together with a geometric argument concludes the proof.
\end{inparaenum}

\begin{proposition}
\label{pr:resolvents-regularity}
For $f \in Lip(\Tn)$ and positive constants $m$, $a$, $\e$, 
the resolvent problems
\begin{subequations}
\begin{align}
\label{E-resolvent}
f_a + a \partial E(f_a) &\ni \e f,\\
\label{smooth-resolvent}
f_{a,m} + a \partial E_m(f_{a,m}) & \ni \e f,
\end{align}
\end{subequations}
admit unique Lipschitz continuous solutions $f_a$ and $f_{a,m}$, respectively, and
\begin{align}
\label{lipschitz-bound-stability}
\norm{\nabla f_a}_\infty, \norm{\nabla f_{a,m}}_\infty
    \leq \e \norm{\nabla f}_\infty.
\end{align}
Moreover, $f_{a,m} \in C^{2,\al}(\T^n)$ for some $\al > 0$.

In what follows, let $\e > 0$ be fixed and we define
\begin{align}
\label{def-of-ha}
h_a &:= \frac{f_a - \e f}{a}, &
h_{a,m} &:= \frac{f_{a,m} - \e f}{a} = -\partial^0 E_m(f_{a,m}).
\end{align}
Then, for fixed $a > 0$,
\begin{align*}
    f_{a,m} &\rightrightarrows f_a && \text{uniformly as $m\to\infty$,
             and}\\
    h_{a,m} &\rightrightarrows h_a && \text{uniformly as $m \to \infty$.}
\intertext{Additionally,}
    f_a &\rightrightarrows \e f && \text{uniformly as $a \to 0$}.\\
\intertext{If furthermore $f \in \mathcal{D}(\partial E)$, then also}
    h_a &\to -\partial^0 E(\e f) && \text{in $L^2(\Tn)$ as $a\to0$}.
\end{align*}
\end{proposition}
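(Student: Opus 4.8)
The plan is to establish the seven assertions in order, with the gradient bound \eqref{lipschitz-bound-stability} serving as the workhorse for the rest. Existence and uniqueness of $f_a$ and $f_{a,m}$ in $L^2(\Tn)$ are immediate from the theory of maximal monotone operators, exactly as for \eqref{resolvent-problem}. For the gradient bound I would use that both $E$ and $E_m$ are invariant under spatial translations $T_h\psi := \psi(\cdot + h)$ and under adding constants, so the resolvents $(I + a\partial E)^{-1}$ and $(I + a\partial E_m)^{-1}$ commute with these operations. Writing $L := \norm{\nabla f}_\infty$, the elementary inequality $\e T_h f \leq \e f + \e L \abs{h}$ together with the comparison principle for the resolvent --- Theorem~\ref{th:resolvent-comparison} for $\partial E$, and the standard maximum principle for the uniformly elliptic quasilinear equation $f_{a,m} - a\divo\bra{(\nabla_p W_m)(\nabla f_{a,m})} = \e f$ (cf. \eqref{approximate-operator}) for $\partial E_m$ --- gives $T_h f_a \leq f_a + \e L \abs{h}$, and the reverse with $-h$, hence $\norm{\nabla f_a}_\infty \leq \e L$, and likewise for $f_{a,m}$. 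The same comparison argument applied to constant right-hand sides yields $\e\min f \leq f_a, f_{a,m} \leq \e\max f$, so these families are uniformly bounded. Finally, since $\nabla f_{a,m} \in L^\infty$, the equation for $f_{a,m}$ is uniformly elliptic with a smooth, and for fixed $m$ uniformly convex, integrand, so the standard regularity theory for quasilinear uniformly elliptic equations gives $f_{a,m} \in C^{1,\al}$ and then Schauder estimates (the right-hand side being Lipschitz) give $f_{a,m} \in C^{2,\al}$.

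For $f_{a,m} \rightrightarrows f_a$ as $m \to \infty$ (with $a$ fixed) I would combine a variational argument with the $m$-independent bounds just obtained. Observe that $f_{a,m}$ and $f_a$ are the unique minimizers of the strictly convex functionals $J_m(\psi) := \tfrac12\norm{\psi - \e f}_2^2 + a E_m(\psi)$ and $J(\psi) := \tfrac12\norm{\psi - \e f}_2^2 + a E(\psi)$. Since $f_a$ is Lipschitz, $f_a \in H^1(\Tn)$, so $E_m(f_a) = \int_\Tn W_m(\nabla f_a) \downarrow \int_\Tn W(\nabla f_a) = E(f_a)$ by dominated convergence, and therefore $\limsup_m J_m(f_{a,m}) \leq \lim_m J_m(f_a) = J(f_a) = \min J$. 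Conversely, by the uniform Lipschitz and sup-norm bounds and the Arzel\`{a}--Ascoli theorem, every subsequence of $(f_{a,m})$ has a further subsequence converging uniformly (hence in $L^2$) to some Lipschitz $g$; using $W \leq W_m$, so $E \leq E_m$, and lower semicontinuity of $E$ on $L^2$, one gets $E(g) \leq \liminf E(f_{a,m}) \leq \liminf E_m(f_{a,m})$ and hence $J(g) \leq \liminf J_m(f_{a,m}) \leq \min J$, forcing $g = f_a$ by uniqueness of the minimizer. Thus the whole sequence converges uniformly, and then $h_{a,m} = (f_{a,m} - \e f)/a \rightrightarrows (f_a - \e f)/a = h_a$ at once.

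For $f_a \rightrightarrows \e f$ as $a \to 0$: by the standard behaviour of resolvents of maximal monotone operators, $(I + a\partial E)^{-1}$ converges strongly in $L^2$ to the identity on $\cl{\mathcal{D}(\partial E)} = L^2(\Tn)$ (density holds because $H^1(\Tn) \subset \mathcal{D}(E)$), so $f_a \to \e f$ in $L^2$; the $a$-independent Lipschitz bound and Arzel\`{a}--Ascoli then upgrade this to uniform convergence exactly as before. For the last assertion I would first note that $E$ is positively $1$-homogeneous, whence $\partial E(\e f) = \partial E(f)$; in particular $f \in \mathcal{D}(\partial E)$ implies $\e f \in \mathcal{D}(\partial E)$ with $\partial^0 E(\e f) = \partial^0 E(f)$. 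Then $h_a = (f_a - \e f)/a \to -\partial^0 E(\e f)$ in $L^2$ is exactly Proposition~\ref{pr:resolvent-convergence} applied with $\e f$ in place of $f$.

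The step I expect to be the main obstacle is the uniform convergence $f_{a,m} \to f_a$: the $\Gamma$-convergence structure of $E_m \to E$ by itself only delivers $L^2$ convergence, and one must interlock it with the $m$-independent Lipschitz bound to obtain the uniform convergence that the stability proof actually uses. The $C^{2,\al}$ regularity is routine but leans on a black-box quasilinear elliptic regularity theorem, and the comparison principle for the regularized resolvent, while standard, should be stated explicitly since the gradient bound rests on it.
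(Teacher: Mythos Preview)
Your argument is correct, and the overall architecture matches the paper's: comparison plus translation invariance for the Lipschitz bound, elliptic regularity for $C^{2,\alpha}$, resolvent theory for $f_a \to \e f$, and Proposition~\ref{pr:resolvent-convergence} for $h_a \to -\partial^0 E(\e f)$. There are two genuine differences worth noting. First, for $f_{a,m} \to f_a$ you give a self-contained variational argument (resolvents as minimizers of the Moreau--Yosida functionals $J_m$, $J$, then $\Gamma$-convergence by hand plus Arzel\`a--Ascoli), whereas the paper invokes the abstract chain from Attouch's book: $E_m \to E$ in the Mosco sense $\Rightarrow$ graph convergence of $\partial E_m$ to $\partial E$ $\Rightarrow$ resolvent convergence in $L^2$, and only then upgrades to uniform convergence via the Lipschitz bound. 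Your route is more elementary and avoids the external reference, at the cost of needing the Lipschitz bound for $f_a$ \emph{before} the convergence step; the paper instead proves the bound only for the classical $f_{a,m}$ and recovers it for $f_a$ in the limit. Second, to place $\e f$ in $\mathcal{D}(\partial E)$ you use the positive $1$-homogeneity of $E$ (hence $\partial E(\e f) = \partial E(f)$), which is cleaner and more general than the paper's appeal to Remark~\ref{re:support-scal-invar}. Both approaches are valid; yours is more hands-on, the paper's more conceptual.
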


\begin{proof}
The existence and uniqueness of $f_{a,m}$ and $f_a$ in $L^2(\Tn)$
is straightforward
from the theory of monotone operators;
see Section~\ref{sec:resolvent-equation}.
$C^{2,\al}$ regularity of $f_{a,m}$
follows from the elliptic regularity theory
since $I + a\partial E_m$ is a quasilinear uniformly elliptic operator;
see \cite{CaffarelliCabre}.
The Lipschitz bound on $f_{a,m}$
is obtained by a standard comparison argument.
Indeed,
since $f_{a,m}$ is a classical solution of \eqref{smooth-resolvent}
with right-hand side $\e f$
and the problem is translation-invariant,
we have that $f_{a,m}(\cdot + z) - \e \norm{\nabla f}_\infty \abs{z}$
is a subsolution of \eqref{smooth-resolvent}
with right-hand side
$\e(f(\cdot+z) -  \norm{\nabla f}_\infty \abs{z}) \leq \e f$
for any $z \in \Tn$.
Therefore the comparison principle yields
$f_{a,m}(x + z) - \e \norm{\nabla f}_\infty \abs{z} \leq f_{a,m}(x)$
for all $x,z \in \Tn$,
which implies the Lipschitz bound
$\norm{\nabla f_{a,m}}_\infty \leq \e \norm{\nabla f}_\infty$.

As we shall show,
the approximation of the $E$-resolvent problem \eqref{E-resolvent}
by the $E_m$-resolvent problems \eqref{smooth-resolvent}
is a consequence of the \emph{Mosco convergence} of $E_m$ to $E$.
Here we present only the outline of the argument
and we refer the reader to the book of Attouch \cite{Attouch}
for the definition and further details.
Indeed, due to Proposition~\ref{pr:Em-properties}(a),
\cite[Theorem~3.20]{Attouch} implies that
$E_m$ Mosco-converges to $E$ as $m \to \infty$.
Consequently, the Mosco convergence of $E_m$ to $E$
implies the graph convergence of $\partial E_m$ to $\partial E$
as $m\to\infty$;
see \cite[Theorem~3.66]{Attouch}.
Finally, the graph convergence is equivalent to
the resolvent convergence: we have, for fixed $a > 0$,
$f_{a,m} \to f_a$ in $L^2$ as $m \to \infty$;
see \cite[Theorem~3.62]{Attouch}.
We conclude by recalling the uniform Lipschitz bound 
$\norm{\nabla f_{a,m}}_\infty \leq \e \norm{\nabla f}_\infty$
and its consequence
$\norm{\nabla h_{a,m}}_\infty \leq 2\e \norm{\nabla f}_\infty/a$,
which yield the uniform convergence of $f_{a,m}$ and $h_{a,m}$
as $m \to\infty$.
In particular, we recover the Lipschitz bound
$\norm{\nabla f_a}_\infty \leq \e\norm{\nabla f}_\infty$.

By the classical theory of resolvent problems,
$f_a \to \e f$ in $L^2(\Tn)$; see \cite{Evans}.
This yields, together with the uniform Lipschitz bound
\eqref{lipschitz-bound-stability},
the uniform convergence.
Additionally, convergence of $h_a$ is a restatement of
Proposition~\ref{pr:resolvent-convergence}
in light of Remark~\ref{re:support-scal-invar}.

\qedhere\end{proof}

\begin{proof}[Proof of Stability Theorem~\ref{th:stability}]
We show the statement for subsolutions.
To simplify the notation slightly, let us denote $u = \ou$.

Clearly $u_m$ is an upper semi-continuous function.
We must verify that (i) and (ii) of Definition~\ref{def:Lambda-subsol}
hold.

\medskip
\noindent
\textbf{(i)} Let $\vp(x,t) = f(x) + g(t)$ be an admissible test function
in a general position of radius $\eta > 0$ at $(\hat x, \hat t)$
with respect to $u$.
By definition,
$f$ is the support function of some smooth pair $(\Omega_-, \Omega_+)$
with Lipschitz constant $L := \norm{\nabla f}_\infty < 0$.
Observe that the definition also ensures $f(\hat x - z) =0$
for all $\abs{z} \leq \eta$.
We set $\de := \eta/4$.

Let us introduce the closed sets
\begin{align*}
U &= \set{u(\cdot, \hat t) \geq u(\hat x, \hat t)},
    & Z &= \set{f \leq 0}, & N_\rho
    &= U^\rho \cap Z^\rho \quad\text{for } \rho > 0.
\end{align*}
The situation at time $\hat t$ is depicted in
Figure~\ref{fig:stability-geometry}.
\begin{figure}
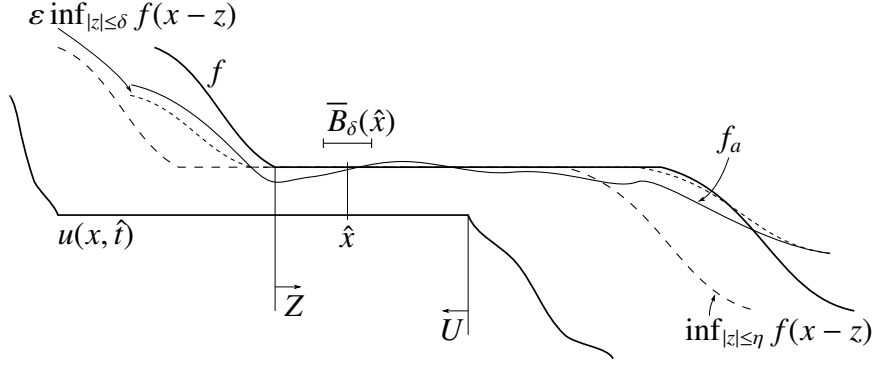

\centering
\fig{Fig2}{4.5in}
\caption{Geometry of the graphs around the contact point $(\hat x, \hat t)$,
at time $\hat t$,
in the proof of Theorem~\ref{th:stability}.}
\label{fig:stability-geometry}
\end{figure}

We will fix a number $\e \in (0,1)$ throughout the proof.
The following observation is an important consequence
of the general position of $\vp$ and $u$,
but we will postpone its proof
until after the end of the proof of stability.

\begin{lemma}
\label{cl:strictish-maximumum}
Assume that $\vp$ is in a general position of radius $\eta$
with respect to $u$.
Then,
by adding the term $\abs{t - \hat t}^2$ to $g(t)$ if necessary,
we have
\begin{align}
\label{strict-stability}
u(x,t) - \e \inf_{\abs{z} \leq \de} f(x-z) - g(t)
    < u(\hat x, \hat t) - g(\hat t)
\end{align}
whenever 
        $(x,t) \in \pth{N_\eta \times [\hat t - \eta, \hat t + \eta]}
         \setminus
            \pth{N_\de \times \set{\hat t}}$,
where $\de = \eta / 4$.
\end{lemma}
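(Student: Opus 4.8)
The plan is to extract from the general position hypothesis a strict inequality and then upgrade it to the uniform-in-a-neighborhood statement \eqref{strict-stability}. First I would unpack what ``general position of radius $\eta$'' gives us: for every $h \in \cl B_\eta(0)$,
\begin{align*}
u(x-h,t) - \vp(x,t) \leq u(\hat x, \hat t) - \vp(\hat x, \hat t)
\quad\text{for all } x \in \Tn,\ t \in [\hat t-\eta, \hat t+\eta],
\end{align*}
and $\hat x + h \in \Omega_- \setminus \cl\Omega_+$, so in particular $f \equiv 0$ on $\cl B_\eta(\hat x)$. Rewriting with $y = x - h$, this says $u(y,t) - \vp(y+h,t) \leq u(\hat x,\hat t) - \vp(\hat x,\hat t)$; since $\vp(y+h,t) = f(y+h)+g(t)$, taking the infimum over $\abs h \le \de \le \eta$ gives $u(y,t) - \inf_{\abs h \le \de} f(y+h) - g(t) \le u(\hat x,\hat t) - g(\hat t)$ (using $f(\hat x)=0$). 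Because $\e \in (0,1)$ and $\inf_{\abs h \le \de} f(y+h) \le 0$ is not automatic, I must be slightly careful: either $\inf f(y+h) \le 0$, in which case $\e \inf f(y+h) \ge \inf f(y+h)$ and the non-strict inequality is preserved, or $\inf f(y+h) > 0$, in which case $\e \inf f(y+h) < \inf f(y+h)$ and we actually gain room. So in all cases the non-strict version of \eqref{strict-stability} holds, with strictness still to be arranged on $N_\de \times \set{\hat t}$'s complement.

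Next I would turn the non-strict inequality into a strict one. The standard device is to add $\abs{t-\hat t}^2$ to $g$: on the set where $t \ne \hat t$ this immediately produces strict inequality, at the cost of leaving the spatial slices $t = \hat t$ untouched. For the slice $t = \hat t$ with $x \notin N_\de$, strictness must come from geometry rather than from the penalization. Here is where the structure of $N_\de = U^\de \cap Z^\de$ enters: if $x \notin N_\de$ then either $x \notin U^\de$, meaning $\dist(x,U) > \de$, so $u(x,\hat t) < u(\hat x,\hat t)$ strictly (by definition of $U$ and upper semicontinuity of $u$, after possibly shrinking — actually directly, since $x \notin U$); or $x \notin Z^\de$, meaning $\dist(x,Z) > \de$, hence $\dist(x-h, Z) > 0$ for all $\abs h \le \de$, so $f(x-h) > 0$ for all such $h$, giving $\inf_{\abs h \le \de} f(x-h) > 0$ and therefore $\e \inf f(x-h) < \inf f(x-h)$, which together with the non-strict inequality yields strict inequality. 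In the first subcase I still need to absorb the term $-\e \inf f(x-h) \le -\e \cdot(\text{something} \le \eta L)$; since $f$ is Lipschitz with small oscillation near $\hat x$ and $u(x,\hat t) - g(\hat t) < u(\hat x,\hat t) - g(\hat t)$ has a strict gap, one can make $\e$ small — but $\e$ is fixed first; the cleaner route is to note $\inf_{\abs h \le \de} f(x-h) \le f(x) + O(\de L)$ and use a compactness/continuity argument on the compact set $\set{x \in N_\eta : \dist(x,U) \ge \de} \times \set{\hat t}$ where $u(\cdot,\hat t) - g(\hat t)$ is bounded away from $u(\hat x,\hat t) - g(\hat t)$.

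The main obstacle I anticipate is precisely this last point: reconciling the fixed, possibly-not-small $\e$ with the need for a strict inequality on the part of $N_\de^c \cap N_\eta$ where $x \in U$ but $x \notin Z^\de$ (so $f$ is positive but $u$ is at its maximum value) — there the term $-\e \inf_{\abs h \le \de} f(x-h) < 0$ helps, so this is fine — versus where $x \notin U^\de$ (so $u$ has dropped) but $f$ might be $\le 0$ nearby — there I rely on the strict drop of $u$ dominating $\e$ times the controlled oscillation of $f$. I would handle this by a covering/compactness argument: the ``bad'' region $K = (N_\eta \setminus \interior N_\de) \times [\hat t-\eta,\hat t+\eta]$ is compact, the function $(x,t)\mapsto u(x,t) - \e\inf_{\abs z \le \de} f(x-z) - g(t)$ is upper semicontinuous, so it attains its maximum on $K$; it then suffices to show this maximum is strictly less than $u(\hat x,\hat t)-g(\hat t)$, which one checks case by case using the subcases above, with the $\abs{t-\hat t}^2$ penalty securing the $t\ne\hat t$ part and the geometric dichotomy on $Z$ versus $U$ securing the $t = \hat t$ part. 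I should also double check that adding $\abs{t-\hat t}^2$ to $g$ does not destroy $g \in C^1$ near $\hat t$ (it does not) and does not change $g(\hat t)$ or $g'(\hat t)$, so the eventual viscosity inequality at $(\hat x,\hat t)$ is unaffected.
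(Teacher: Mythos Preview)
Your overall strategy matches the paper's, but there is a genuine sign error that undermines the case $\inf_{|h|\le\de} f(x-h) > 0$. You write that in this case ``$\e \inf f(y+h) < \inf f(y+h)$ and we actually gain room.'' But the quantity you are trying to bound from above is $u - \e\inf f - g$, and since $-\e\inf f > -\inf f$ when $\inf f > 0$, you have
\[
u - \e\inf f - g \;>\; u - \inf f - g \;\le\; u(\hat x,\hat t) - g(\hat t),
\]
so the inequality $u - \inf f - g \le \text{RHS}$ derived from general position with shifts $|h|\le\de$ does \emph{not} yield $u - \e\inf f - g \le \text{RHS}$. This is precisely the obstacle you later sense in your ``first subcase'' discussion, but you misdiagnose its location; the compactness argument you sketch cannot repair it because there is no a priori reason the gap $(1-\e)\inf f > 0$ is dominated by anything.

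The paper's fix is clean and requires no compactness. Use the full radius $\eta$ of general position, not just $\de$: for $x \in N_\eta \subset Z^\eta$ there is some $|h|\le\eta$ with $x - h \in Z$, hence $f(x-h)\le 0$, and general position gives $u(x,t) - g(t) \le u(x,t) - f(x-h) - g(t) \le u(\hat x,\hat t) - g(\hat t)$ (this is the paper's Lemma~\ref{le:simple-facet-order}). Then whenever $\inf_{|z|\le\de} f(x-z) > 0$ one has directly
\[
u - \e\inf_{|z|\le\de} f(x-z) - g \;<\; u - g \;\le\; u(\hat x,\hat t) - g(\hat t),
\]
which is already strict. With this in hand, your case split becomes exactly the paper's: if $\inf f > 0$ use the displayed chain above; if $\inf f < 0$ use $\e < 1$ for strictness in $u - \e\inf f - g < u - \inf f - g \le \text{RHS}$; if $\inf f = 0$ and $t = \hat t$ then $u - \e\inf f - g = u - g$, which is strictly below RHS precisely when $x \notin U$, and the remaining case $x \in U$, $\inf f = 0$, $t = \hat t$ forces $x \in U \cap Z^\de \subset N_\de$, which is excluded. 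The $|t-\hat t|^2$ penalty handles $t \ne \hat t$ as you say. No appeal to smallness of $\e$ or to compactness of the bad set is needed.
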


\medskip
Now we recall the definitions of $f_a$ and $f_{a,m}$,
together with $h_a$ and $h_{a,m}$,
and their properties from Proposition~\ref{pr:resolvents-regularity},
which we will use in the rest of the proof.

For given $a > 0$ and $z$, $\abs{z} \leq \de$,
we define the set of maxima
\begin{align*}
A_z := \argmax_{N_{3\de} \times [\hat t - 2\de, \hat t + 2\de]}
    \bra{u(x,t)- f_a(x - z) - g(t)}.
\end{align*}
The uniform convergence $f_a \rightrightarrows \e f$
and the strict ordering in Lemma~\ref{cl:strictish-maximumum}
guarantee the existence of $a_0 > 0$ such that
whenever $a < a_0$ then
\begin{align}
\label{Az-def}
\emptyset &\neq A_z
    \subset N_{2\de} \times [\hat t - \de, \hat t + \de]
    &&\text{for all } \abs{z} \leq \de.
\end{align}
We fix one such $a < a_0$ and find $z$, $\abs{z} \leq \de$,
such that
\begin{align}
\label{z-fa-min}
f_a(\hat x - z) = \min_{\cl B_\de(\hat x)} f_a.
\end{align}
This choice will guarantee \eqref{claim-ha} to hold.

Again, due to the uniform convergence $f_{a,m} \rightrightarrows f_a$
as $m \to \infty$, there is
$(x_a, t_a) \in A_z$ and a sequence $(x_{a,m}, t_{a,m})$
(for a subsequence of $m$) of local maxima of
\begin{align*}
(x, t) \mapsto u_m(x,t) - f_{a,m}(x-z) - g(t).
\end{align*}
such that $(x_{a,m}, t_{a,m}) \to (x_a, t_a)$ as $m \to \infty$
(along a subsequence).

Since $f_{a,m} \in C^{2,\al}$ and $u_m$ is a visc. subsolution of
\eqref{approximate-problem},
we have
\begin{align}
\label{am-ineq-stability}
\begin{aligned}
g'(t_{a,m}) &+ F(\nabla f_{a,m}(x_{a,m} - z), h_{a,m}(x_{a,m} - z))\\
    &= g'(t_{a,m})
      + F(\nabla f_{a,m}(x_{a,m} - z), -\partial E_m(f_{a,m})(x_{a,m} - z)))
    \leq 0.
\end{aligned}
\end{align}
Now we recall the Lipschitz bound \eqref{lipschitz-bound-stability},
which
yields that there exists $p_a$ such that,
along a subsequence,
$\nabla f_{a,m}(x_{a,m}) \to p_a$,
and $\abs{p_a} \leq \e \norm{\nabla f}_\infty = \e L$.
Consequently, we send $m \to \infty$
in \eqref{am-ineq-stability}
along a subsequence,
and
use the uniform convergence $h_{a,m} \rightrightarrows h_a$
and the continuity of $g'$ and $F$ to conclude that
\begin{align}
\label{shifted-ha}
g'(t_a) + F(p_a, h_a(x_a - z)) \leq 0.
\end{align}

To estimate $h_a(x_a -z)$,
we observe an important geometric consequence of
the choice of $z$ in \eqref{z-fa-min}, and the profile of $u$,
which follows from the general position of $\vp$ with
respect to $u$.
Again, we postpone the proof until after the end of the proof
of stability.

\begin{lemma}
\label{cl:h-at-xa}
We have
\begin{align}
\label{claim-ha}
h_a(x_a - z) \leq h_a(\hat x - z) = \min_{\cl B_\de(\hat x)} h_a.
\end{align}
\end{lemma}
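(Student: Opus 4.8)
The plan is to establish the two assertions of Lemma~\ref{cl:h-at-xa} in turn, the equality being essentially immediate and the inequality requiring the general‑position structure of $\vp$ together with the geometry of the sets $N_\rho$. For the equality: since $\vp$ is in general position of radius $\eta$ with respect to $u$ at $(\hat x,\hat t)$, we have $\cl B_\eta(\hat x)\subset\Omega_-\setminus\cl\Omega_+\subset D$, so the support function $f$ vanishes on $\cl B_\eta(\hat x)$, hence on $\cl B_\de(\hat x)$ because $\de=\eta/4$. Consequently $\e f\equiv 0$ on $\cl B_\de(\hat x)$ and therefore $h_a=(f_a-\e f)/a=f_a/a$ there. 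Since $z$ was chosen in \eqref{z-fa-min} so that $f_a(\hat x-z)=\min_{\cl B_\de(\hat x)}f_a$, and $\abs z\le\de$ forces $\hat x-z\in\cl B_\de(\hat x)$, this gives at once $h_a(\hat x-z)=\tfrac1a f_a(\hat x-z)=\tfrac1a\min_{\cl B_\de(\hat x)}f_a=\min_{\cl B_\de(\hat x)}h_a$.

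For the inequality I would compare the maximizer $(x_a,t_a)\in A_z$ with the reference point $(\hat x,\hat t)$, which lies in $N_{3\de}\times[\hat t-2\de,\hat t+2\de]$ because $\hat x\in U\cap Z$ (indeed $u(\hat x,\hat t)\ge u(\hat x,\hat t)$ and $f(\hat x)=0\le 0$). The definition of $A_z$ then yields
\[
f_a(x_a-z)-f_a(\hat x-z)\le\bra{u(x_a,t_a)-g(t_a)}-\bra{u(\hat x,\hat t)-g(\hat t)}.
\]
To bound the right‑hand side, substitute $x=x_a+h$ into the defining inequality of general position and use $f(\hat x)=0$: this gives $u(x_a,t_a)-g(t_a)\le u(\hat x,\hat t)-g(\hat t)+f(x_a+h)$ for every $h\in\cl B_\eta(0)$, which is legitimate since $\abs{t_a-\hat t}\le\de<\eta$ by \eqref{Az-def}. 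Minimizing over such $h$ produces
\[
f_a(x_a-z)-f_a(\hat x-z)\le\min_{\abs h\le\eta}f(x_a+h).
\]

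It then remains only to verify the elementary estimate $\min_{\abs h\le\eta}f(x_a+h)\le\e f(x_a-z)$. By \eqref{Az-def}, $x_a\in N_{2\de}\subset Z^{2\de}=Z+\cl B_{2\de}(0)$, so there is $h_0$ with $\abs{h_0}\le 2\de<\eta$ and $x_a+h_0\in Z=\set{f\le 0}$; comparing the minimum over $\abs h\le\eta$ with the two admissible choices $h=h_0$ and $h=-z$ (the latter allowed since $\abs z\le\de<\eta$) gives $\min_{\abs h\le\eta}f(x_a+h)\le\min\pth{0,\,f(x_a-z)}\le\e f(x_a-z)$, the last step because $\e\in(0,1)$. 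Plugging this into the previous display and using $f(\hat x-z)=0$ yields $f_a(x_a-z)-\e f(x_a-z)\le f_a(\hat x-z)-\e f(\hat x-z)$, which upon dividing by $a>0$ is exactly $h_a(x_a-z)\le h_a(\hat x-z)$, and the lemma follows.

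I do not expect a genuine obstacle here. The argument is driven by the single structural observation that $h_a=f_a/a$ on the facet $\cl B_\eta(\hat x)$, so that minimizing $f_a$ there is the same as minimizing $h_a$, together with the ``minimize over all small shifts'' form of the general‑position property, which transfers the flatness of $u$ at $(\hat x,\hat t)$ through the resolvent smoothing $f\mapsto f_a$. The only care needed is the bookkeeping: checking that $(\hat x,\hat t)$ and $(x_a,t_a)$ lie in the prescribed neighborhoods $N_{3\de}\times[\hat t-2\de,\hat t+2\de]$ and $N_{2\de}\times[\hat t-\de,\hat t+\de]$, and that the shifts $h_0$ and $-z$ have length below $\eta$, all of which reduces to $2\de<\eta$ and $\de<\eta$.
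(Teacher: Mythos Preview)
Your proof is correct and follows essentially the same route as the paper: the equality comes from $f\equiv 0$ on $\cl B_\de(\hat x)$ together with the choice \eqref{z-fa-min}, and the inequality comes from comparing the maximizer $(x_a,t_a)$ with $(\hat x,\hat t)$ and invoking the general-position inequality. The only packaging difference is that the paper isolates the two consequences of general position into Lemma~\ref{le:simple-facet-order} and uses them separately --- first to get $f(x_a-z)\ge 0$ (via $x_a-z\in U^\eta$), then to get $f_a(x_a-z)\le f_a(\hat x-z)$ (via $x_a\in Z^\eta$) --- whereas you combine both into a single chain via the estimate $\min_{\abs h\le\eta}f(x_a+h)\le\min(0,f(x_a-z))\le\e f(x_a-z)$; this is a cosmetic difference, not a substantive one.
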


Therefore, by ellipticity of $F$ and by \eqref{shifted-ha},
we arrive at
\begin{align}
\label{min-ha-ineq}
g'(t_a) + F(p_a, \min_{\cl B_\de(\hat x)} h_a)
    \leq g'(t_a) + F(p_a, h_a(x_a - z)) \leq 0.
\end{align}

In the next step, we send $a \to 0$ along a subsequence.
We choose the subsequence $a_l$ in such a way that $p_{a_l} \to \hat p$
with $\abs{\hat p} \leq \e L$
(this is possible by $\abs{p_a} \leq \e L$)
and
$\min_{\cl B_\de} h_{a_l}
    \to \liminf_{a \to0} \min_{\cl B_\de} h_a$
as $l \to \infty$.
Since also $t_{a_l} \to \hat t$,
we send $l \to \infty$ and,
with the help of continuity, we conclude from \eqref{min-ha-ineq}
that
\begin{align}
\label{liminf-min-ha}
g'(\hat t) + F(\hat p, \liminf_{a\to0} \min_{\cl B_\de(\hat x)} h_a)
    \leq 0.
\end{align}
It is time to use the knowledge of the limit $h_a \to -\partial^0 E(\e f)$
in $L^2$;
we remind the reader that $\e f \in \mathcal{D}(\partial E)$,
as was observed in Remark~\ref{re:support-scal-invar}.
Because $\cl B_\de(\hat x)$ is contained in the facet,
$-\partial^0 E(\e f) = \Lambda(\Omega_-, \Omega_+)$ a.e. in $B_\de(\hat x)$
by Proposition~\ref{pr:support-function}.
This yields
\begin{align*}
\liminf_{a\to0} \min_{\cl B_\de(\hat x)} h_a
    \leq \essinf_{B_\de(\hat x)} \Lambda(\Omega_-, \Omega_+).
\end{align*}
One last time, the above inequality, together with the ellipticity of $F$
and with \eqref{liminf-min-ha},
leads to
\begin{align}
\label{essinf-ha}
g'(\hat t) + F(\hat p, \essinf_{B_\de(\hat x)} \Lambda(\Omega_-, \Omega_+))
    \leq 0.
\end{align}

As the last step, we recall that the reasoning is valid for any
fixed $\e \in (0,1)$, yielding $\hat p = \hat p(\e)$.
After sending $\e \to 0$ and realizing that
$\abs{\hat p(\e)} \leq \e L \to 0$,
we recover the inequality \eqref{essinf-ha} with $\hat p = 0$.
This finishes the proof for admissible faceted test functions.

\medskip
\noindent
\textbf{(ii)}
We can follow the standard stability argument
in the theory of viscosity solutions.

\medskip
The proof for supersolutions is completely analogous.
\qedhere\end{proof}

We still owe the reader a proof of Lemma~\ref{cl:strictish-maximumum}
and \ref{cl:h-at-xa},
which both rely on the following geometrical lemma.

\begin{lemma}
\label{le:simple-facet-order}
Suppose that $\vp$ is in general position of radius $\eta$
with respect to $u$ at $(\hat x, \hat t)$,
where $\vp(x,t) = f(x) + g(t)$.
Then
\begin{align*}
u(x,t) \leq g(t) - g(\hat t) + u(\hat x, \hat t) \quad
    \text{for } (x,t) \in Z^\eta \times [\hat t - \eta, \hat t + \eta],
\end{align*}
and
\begin{align*}
f(x) \geq 0 \quad \text{for } x \in U^\eta,
\end{align*}
where $Z := \set{f \leq 0}$
and $U := \set{u(\cdot, \hat t) \geq u(\hat x, \hat t)}$.
\end{lemma}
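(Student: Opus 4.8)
The plan is to unwind the definition of general position; the crux is that the shift of $u$ by $h$ occurring there is equivalent, after a change of variables, to a pointwise lower bound on $f$ over a small ball. First I would note that since $f$ is a support function of $(\Omega_-,\Omega_+)$ and $\hat x \in \Omega_-\setminus\cl\Omega_+ \subset D = \cl\Omega_-\setminus\Omega_+$, we have $f(\hat x) = 0$, hence $\vp(\hat x,\hat t) = g(\hat t)$. Then the general position inequality, stated for an arbitrary $x \in \Tn$, $t \in [\hat t-\eta,\hat t+\eta]$ and $h \in \cl B_\eta(0)$, reads
\begin{align*}
u(x-h,t) - f(x) - g(t) \leq u(\hat x,\hat t) - g(\hat t).
\end{align*}
Substituting $x = x_0 + h$ (legitimate since the inequality holds for every $x \in \Tn$) gives the basic estimate
\begin{align*}
u(x_0,t) \leq f(x_0+h) + g(t) - g(\hat t) + u(\hat x,\hat t),
\end{align*}
valid for all $x_0 \in \Tn$, $t \in [\hat t-\eta,\hat t+\eta]$, and $h \in \cl B_\eta(0)$.

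To prove the first assertion I would take $x \in Z^\eta = Z + \cl B_\eta(0)$, write $x = y + k$ with $y \in Z$ and $\abs{k} \leq \eta$, and set $h := -k$, so that $x + h = y \in Z$ and therefore $f(x+h) \leq 0$; plugging $x_0 = x$ and this particular $h$ into the basic estimate yields $u(x,t) \leq g(t) - g(\hat t) + u(\hat x,\hat t)$ for $t \in [\hat t-\eta,\hat t+\eta]$, as desired.

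For the second assertion I would take $x \in U^\eta = U + \cl B_\eta(0)$, write $x = y + h$ with $y \in U$ and $\abs{h} \leq \eta$, and apply the basic estimate with $x_0 = y$, $t = \hat t$: this gives $u(y,\hat t) \leq f(x) + u(\hat x,\hat t)$. Since $y \in U$ means $u(y,\hat t) \geq u(\hat x,\hat t)$, we conclude $f(x) \geq 0$.

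I do not expect a genuine obstacle here: the whole argument is bookkeeping inside the definition of general position, and the only point requiring care is tracking the direction of the shift by $h$ and performing the substitution $x = x_0 + h$ correctly, after which the two claims reduce to the definition of the Minkowski neighborhoods $Z^\eta$ and $U^\eta$. It is worth noting in passing that the change of variables stays on $\Tn$ (automatic) and that only a single well-chosen $h \in \cl B_\eta(0)$ is needed in each case, so no compactness argument for a minimum of $f$ is required.
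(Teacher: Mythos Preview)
Your proof is correct and follows essentially the same approach as the paper: both unwind the definition of general position, use $f(\hat x)=0$, and then choose the shift $h$ so as to land in $Z$ (for the first assertion) or start from a point of $U$ (for the second), exploiting the Minkowski-sum description of $Z^\eta$ and $U^\eta$. Your write-up is slightly more explicit about the change of variables $x = x_0 + h$, but the argument is the same.
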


\begin{proof}
By definition of general position, Definition~\ref{def:general-position},
\begin{align*}
u(x - z, t) - f(x) - g(t) &\leq u(\hat x, \hat t) - f(\hat x) - g(\hat t)
&&\text{for all } \abs{z} \leq \eta.
\end{align*}
We recall that $f(\hat x) = 0$,
and, by definition, whenever $y \in Z^\eta$ there exist $x \in Z$
and $z$, $\abs{z} \leq \eta$, such that $y = x - z$.
Similarly, if $x \in U^\eta$ then there exists $z$, $\abs{z} \leq \eta$,
such that $x - z \in U$.
The result follows. 
\qedhere\end{proof}

\begin{proof}[Proof of Lemma~\ref{cl:strictish-maximumum}]
We shall use the notation from 
the proof of Theorem~\ref{th:stability}.
Let us recall that $\e \in (0,1)$.
For any $t \in [\hat t - \eta, \hat t + \eta]$,
we split the set $N_{2\de}$ into two disjoint parts:

\begin{itemize}
\item
If $x \in N_{4\de} \setminus Z^\delta \subset Z^{4\de} = Z^\eta$,
it is clear that $\inf_{\abs{z} \leq \de} f(x-z) > 0$
and therefore
\begin{align*}
u(x,t) - \e \inf_{\abs{z} \leq \de} f(x-z) - g(t)
    < u(x,t) - g(t) \leq u(\hat x, \hat t) - g(\hat t),
\end{align*}
where the second inequality follows
from Lemma~\ref{le:simple-facet-order} and the fact that $x \in Z^\eta$.

\item
On the other hand,
if $x \in N_{4\de} \cap Z^\delta$
then $\inf_{\abs{z} \leq \de} f(x-z) \leq 0$
and the definition of general position yields
\begin{align*}
u(x,t) - \e \inf_{\abs{z} \leq \de} f(x-z) - g(t)
    \leq  u(x,t)- \inf_{\abs{z} \leq \de} f(x-z) - g(t)
    \leq u(\hat x, \hat t) - f(\hat x) - g(\hat t).
\end{align*}
One readily observes that
\begin{compactenum}
\item
the first inequality is strict whenever
$\inf_{\abs{z} \leq \de} f(x-z) < 0$ (recall that $\e < 1$);
\item
the second inequality is strict whenever
\begin{compactitem}
\item
$t \neq \hat t$ (by adding $\abs{t -\hat t}^2$ to $g(t)$ if necessary).
\item
$t = \hat t$, $\inf_{\abs{z} \leq \de} f(x-z) = 0$ and $x \notin U$.
\end{compactitem}
\end{compactenum}
\end{itemize}
We recover Lemma~\ref{cl:strictish-maximumum} by collecting
all these cases.
\qedhere\end{proof}

\begin{proof}[Proof of Lemma~\ref{cl:h-at-xa}]
Recalling the definition of $h_a$ in \eqref{def-of-ha},
we want to show
\begin{align*}
f_a(x_a - z) - f(x_a - z)
    \leq f_a(\hat x - z) - f(\hat x - z)
    = \min_{\cl B_\de(\hat x)} f_a - f.
\end{align*}
The equality follows directly from \eqref{z-fa-min},
after realizing that $f(\hat x - z) = 0$ since $\hat x -z$
lies in the facet.

To prove the inequality, we make the following two observations:
\begin{enumerate}
\item
$-f(x_a - z) \leq 0 = f(\hat x - z)$:
indeed,
since $x_a \in N_{2\de}$ due to \eqref{Az-def} and the choice of $a$,
we observe that $x_a - z \in N_{3\de} \subset U^\eta$
and therefore Lemma~\ref{le:simple-facet-order} applies.

\item
$f_a(x_a-z) \leq f_a(\hat x - z)$:
as $(x_a, t_a) \in A_z$,
i.e $(x_a, t_a)$ is a point of maximum,
and $\hat x \in U \cap Z \subset N_{3\de}$,
we must have
\begin{align*}
u(x_a, t_a) - f_a(x_a -z) - g(t_a)
    \geq u(\hat x, \hat t) - f_a(\hat x -z) - g(\hat t).
\end{align*}
A rearrangement of this inequality yields
\begin{align*}
f_a(x_a - z) \leq f_a(\hat x - z)
    + \bra{u(x_a,t_a) - g(t_a) - u (\hat x, \hat t) + g(\hat t)},
\end{align*}
and, because $x_a \in Z^\eta$
and thus Lemma~\ref{le:simple-facet-order} implies that
the expression in the bracket is nonpositive.
\end{enumerate}
The inequalities in (a) and (b) together imply Lemma~\ref{cl:h-at-xa}.
\qedhere\end{proof}

\subsection{Existence}

The standard consequence of the stability is the following existence theorem.

\begin{theorem}[Existence]
\label{th:existence}
If $F$ is degenerate elliptic and $u_0 \in C(\Tn)$,
there exists a unique solution
$u \in C(\Tn \times [0, \infty))$ of \eqref{tvf}
with the initial data $u_0$.
Furthermore, if $u_0 \in \Lip(\Tn)$ then
\begin{align*}
\norm{\nabla u(\cdot, t)}_\infty \leq \norm{\nabla u_0}_\infty.
\end{align*}
\end{theorem}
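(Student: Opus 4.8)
\emph{Plan.} The solution will be produced as the common value of the two half-relaxed limits of the viscosity solutions $u_m$ of the regularized problems \eqref{approximate-problem}, by combining the Stability Theorem~\ref{th:stability}, the Comparison Theorem~\ref{th:comparison}, and the barriers of Lemma~\ref{le:cutoff-wulff-solutions}. Uniqueness is immediate and does not need the approximation: if $u_1, u_2 \in C(\Tn \times [0,\infty))$ both solve \eqref{tvf}--\eqref{tvf-ic}, then on every $Q = \Tn \times (0,T)$ both functions are bounded, $u_1$ is a subsolution and $u_2$ a supersolution, and $u_1 = u_2 = u_0$ at $t = 0$; Theorem~\ref{th:comparison} gives $u_1 \leq u_2$ on $Q$, and by symmetry $u_1 = u_2$. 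Since $T > 0$ is arbitrary, $u_1 \equiv u_2$.

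\emph{Construction and uniform bounds.} For each $m$, Proposition~\ref{pr:Em-properties}(c) provides a unique global viscosity solution $u_m \in C(\Tn \times [0,\infty))$ of \eqref{approximate-problem} with $u_m(\cdot, 0) = u_0$. Because $W_m$ is radially symmetric we have $\nabla_p W_m(0) = 0$, so every spatially constant, affine-in-time function $v(x,t) = c - F(0,0)\,t$ is an exact solution of \eqref{approximate-problem}; the comparison principle for the regularized problem (Proposition~\ref{pr:Em-properties}(c)) then yields
\begin{align*}
\min_{\Tn} u_0 - F(0,0)\,t \;\leq\; u_m(x,t) \;\leq\; \max_{\Tn} u_0 - F(0,0)\,t .
\end{align*}
Hence $\{u_m\}$ is bounded on $\Tn \times [0,T]$ uniformly in $m$ for every $T > 0$, so $\ou := \halflimsup_{m\to\infty} u_m$ and $\uu := \halfliminf_{m\to\infty} u_m$ are finite and bounded on bounded time intervals. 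The Stability Theorem~\ref{th:stability} then shows that $\ou$ is a subsolution and $\uu$ a supersolution of \eqref{tvf}.

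\emph{Initial data and conclusion.} From the definition of the half-relaxed limits and $u_m(\cdot,0) = u_0$ one directly gets $\uu(\cdot,0) \leq u_0 \leq \ou(\cdot,0)$, so the remaining task is to rule out an initial boundary layer, i.e. to prove the reverse inequalities. This is where Lemma~\ref{le:cutoff-wulff-solutions} enters: given $\hat x \in \Tn$ and $\e > 0$, it supplies a bounded function $w$ that is a supersolution of \eqref{approximate-problem} \emph{uniformly in} $m$, with $w(\cdot,0) \geq u_0$ and $w(\hat x, t) \leq u_0(\hat x) + \e$ for all small $t$, depending only on a modulus of continuity of $u_0$. Comparison for the regularized problem gives $u_m \leq w$, hence $\ou \leq w$, and evaluating at $(\hat x, 0)$ and letting $\e \to 0$ yields $\ou(\hat x, 0) \leq u_0(\hat x)$; the symmetric barrier gives $\uu(\hat x, 0) \geq u_0(\hat x)$. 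Thus $\ou(\cdot,0) = u_0 = \uu(\cdot,0)$. Now, for each $T > 0$, $\ou$ (sub) and $\uu$ (super) are bounded on $Q = \Tn \times (0,T)$ and ordered at $t = 0$, so Theorem~\ref{th:comparison} gives $\ou \leq \uu$ on $Q$; since always $\uu \leq \ou$, we obtain $\ou = \uu =: u$. As $\ou$ is upper and $\uu$ lower semicontinuous, $u \in C(\Tn \times [0,\infty))$, it is both a subsolution and a supersolution of \eqref{tvf}, hence the desired viscosity solution, and $u_m \to u$ locally uniformly. Finally, if $u_0 \in \Lip(\Tn)$ with $L = \norm{\nabla u_0}_\infty$, then for every $h \in \Tn$ the function $u_m(\cdot + h, t) - L\abs{h}$ solves \eqref{approximate-problem} (by translation invariance, since subtracting a constant changes neither $\nabla$ nor $-\partial^0 E_m$) and lies below $u_0$ at $t = 0$; comparison gives $u_m(x + h, t) - L\abs{h} \leq u_m(x, t)$, so $\norm{\nabla u_m(\cdot,t)}_\infty \leq L$ for all $m, t$, and passing to the locally uniform limit yields $\norm{\nabla u(\cdot,t)}_\infty \leq \norm{\nabla u_0}_\infty$.

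\emph{Main obstacle.} The delicate point is the barrier step, i.e. Lemma~\ref{le:cutoff-wulff-solutions}: one must build supersolutions (and, symmetrically, subsolutions) that attain a prescribed initial value at a point while dominating $u_0$, and do so \emph{at the level of the local regularizations} \eqref{approximate-problem} in a way that is uniform in $m$ so it survives the passage to the nonlocal limit. This adapts the nonstandard construction of \cite{GG99CPDE} using cut-off Wulff-shape profiles; the rest is a routine assembly of the comparison and stability machinery already in place.
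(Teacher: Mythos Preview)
Your outline is essentially the paper's own proof: approximate by \eqref{approximate-problem}, use stability for the half-relaxed limits, barriers at $t=0$ to pin the initial data, then comparison to close. Two small points are worth noting. First, your description of Lemma~\ref{le:cutoff-wulff-solutions} is slightly off: it does \emph{not} produce a single barrier $w$ valid for all $m$, but rather an $m$-dependent family $\phi_m = Bt + w_m(\cdot;A,q)$ (with $B$ and $m_0$ universal), and the paper's Step~2 builds the actual barriers $\overline\phi_m^\xi, \underline\phi_m^\xi$ from these via periodization and a choice of $A,q,r$ tied to the modulus of continuity of $u_0$; one then passes the inequality $u_m \leq \underline\phi_m^\xi$ through the half-relaxed limit rather than through a fixed $w$. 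Second, for the Lipschitz bound the paper works directly with the limit $u$ (using translation invariance of \eqref{tvf} and Theorem~\ref{th:comparison}), whereas you first get $\norm{\nabla u_m(\cdot,t)}_\infty \leq L$ and then pass to the limit; both arguments are fine.
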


The proof of the theorem will proceed in three steps:
\begin{inparaenum}[1)]
\item due to the stability, by finding the solution $u_m$ of
the problem \eqref{approximate-problem}
for all $m \geq 1$,
we can find a subsolution $\ou$ and a supersolution $\uu$
of \eqref{tvf};
\item \label{existence-step-2}
    a barrier argument at $t = 0$ shows that
$\ou$ and $\uu$ have the correct initial data $u_0$; and
\item \label{existence-step-3}
the comparison principle shows that $\ou = \uu$
is the unique viscosity solution of \eqref{tvf},
and the Lipschitz estimate holds.
\end{inparaenum}

Before giving a proof of the existence theorem,
we construct barriers for step \ref{existence-step-2}.

Since the operator \eqref{approximate-operator}
degenerates at points where $\nabla u = 0$ as $m \to\infty$,
it seems to be necessary to construct barriers that depend
on $m$.
We will use the Wulff functions for energy $E_m$;
these were previously considered in the proof of
stability for general equations of the type \eqref{approximate-problem}
in one-dimensional setting in \cite{GG99CPDE}.

As we did before, let us set
\begin{align*}
W(p) &= \abs{p}, &
    W_m(p) = (\abs{p}^2 + m^{-2})^{1/2} + m^{-1} \abs{p}^2.
\end{align*}
Given these convex functions, we define their 
convex conjugates (via the Legendre-Fenchel transformation)
\begin{align}
\label{legendre-fenchel}
W_m^*(x) = \sup_{p\in\Rn} \bra{p \cdot x - W_m(p)}, \qquad x \in \Rn.
\end{align}
$W^*(x)$ is defined analogously.
We recall that $W(p)$ is the support function of $\cl B_1(0)$
and therefore
\begin{align*}
W^*(x) =
\begin{cases}
0 & \abs{x} \leq 1,\\
+\infty & \abs{x} > 1.
\end{cases}
\end{align*}

Functions $W_m^*$ are the Wulff shapes of the energies $E_m$
in the sense that the operator \eqref{approximate-operator}
applied to $W_m^*$ gives a constant for all $x$.
We summarize the properties of the Wulff function
in the following lemma.

\begin{lemma}
\label{le:Wulff}
The functions $W_m^*$ are:
\begin{inparaenum}[(i)]
\item smooth functions on $\Rn$;
\item strictly convex, radially symmetric;
\item increasing in $m$;
\item $W_m^* \geq -m^{-1}$ with equality only at $x = 0$;
\item $W_m^* \to 0$ uniformly on $\set{\abs{x} \leq 1}$
    as $m \to\infty$;
\item $W_m^* \to +\infty$ uniformly on $\set{\abs{x} \geq R}$
for any $R > 1$ as $m\to\infty$; and finally,
\item 
\begin{align*}
-\partial^0 E_m(W_m^*) \equiv n.
\end{align*}
\end{inparaenum}
\end{lemma}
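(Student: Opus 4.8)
The plan is to derive all seven properties from a single structural observation: each $W_m$ is a $C^\infty$, uniformly convex function on $\Rn$ with superlinear growth, so $\nabla W_m : \Rn \to \Rn$ is a $C^\infty$ diffeomorphism and the Legendre dual $W_m^*$ inherits smoothness and strict convexity, with $\nabla W_m^* = (\nabla W_m)^{-1}$. I would begin by recording the elementary facts about $W_m$ itself. Since $\abs{p}^2 + m^{-2} > 0$, $W_m \in C^\infty(\Rn)$; it is radially symmetric, $W_m(p) = \omega_m(\abs{p})$ with $\omega_m(r) = (r^2 + m^{-2})^{1/2} + m^{-1} r^2$ strictly increasing on $[0,\infty)$; and $\nabla^2 W_m \geq 2 m^{-1} I$, so $W_m$ is uniformly convex and in particular superlinear. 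Consequently $\inf_{\Rn} W_m = W_m(0) = m^{-1}$, attained only at $p = 0$.

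For (i) and (ii): because $W_m$ is finite, differentiable, and superlinear, $W_m^*$ is finite on all of $\Rn$ and satisfies $\nabla W_m^* = (\nabla W_m)^{-1}$; the global inverse function theorem, using properness from superlinearity, shows $\nabla W_m$ is a $C^\infty$ diffeomorphism, whence $W_m^*(x) = x\cdot (\nabla W_m)^{-1}(x) - W_m\pth{(\nabla W_m)^{-1}(x)} \in C^\infty(\Rn)$. Strict convexity of $W_m^*$ follows from the differentiability (essential smoothness) of $W_m$ together with $\operatorname{dom} W_m^* = \Rn$, and radial symmetry of $W_m^*$ is immediate from that of $W_m$ via a rotation of the variable in the supremum \eqref{legendre-fenchel}. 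Property (iii) needs only that $W_m$ is decreasing in $m$ (already noted in the text) together with the order-reversing nature of conjugation.

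The bounds (iv)--(vi) are elementary estimates on \eqref{legendre-fenchel}. Taking $p = 0$ gives $W_m^*(x) \geq -W_m(0) = -m^{-1}$, with equality at $x = 0$; moreover $\nabla W_m^*(0) = (\nabla W_m)^{-1}(0) = 0$, so $0$ is the unique minimizer of the strictly convex $W_m^*$, which yields strict inequality for $x \neq 0$ and proves (iv). Since $W_m(p) \geq \abs{p} = W(p)$ pointwise, conjugation gives $W_m^* \leq W^* = 0$ on $\set{\abs{x} \leq 1}$, so there $-m^{-1} \leq W_m^* \leq 0$, which is (v). For (vi), fix $R > 1$; for any $t > 0$ and any $x$ with $\abs{x} \geq R$, choosing $p = t x/\abs{x}$ in \eqref{legendre-fenchel} gives $W_m^*(x) \geq t\abs{x} - \omega_m(t) \geq tR - \omega_m(t)$, and since $\omega_m(t) \downarrow t$ as $m\to\infty$ while $t$ may be taken arbitrarily large independently of $x$, we conclude $W_m^* \to +\infty$ uniformly on $\set{\abs{x} \geq R}$.

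Finally, (vii) is the identity that makes $W_m^*$ a useful barrier, and it drops out of the duality: by the expression \eqref{approximate-operator}, $-\partial^0 E_m(W_m^*) = \divo\bra{(\nabla_p W_m)(\nabla W_m^*)}$, and since $(\nabla_p W_m)(\nabla W_m^*(x)) = \nabla W_m\pth{(\nabla W_m)^{-1}(x)} = x$ we get $-\partial^0 E_m(W_m^*) = \divo x = n$ (a pointwise local computation, valid although $W_m^*$ is not periodic). I do not expect any serious obstacle; the only care needed is to invoke the Legendre-transform machinery correctly, since the identity $\nabla W_m^* = (\nabla W_m)^{-1}$ underlies both the regularity statements (i)--(ii) and the crucial identity (vii), and to make the limits in (v)--(vi) genuinely uniform in $x$.
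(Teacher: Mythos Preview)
Your proposal is correct and follows essentially the same approach as the paper: both exploit the uniform convexity and superlinearity of $W_m$ to invoke the Legendre duality $\nabla W_m^* = (\nabla W_m)^{-1}$, from which smoothness, strict convexity, and the key identity (vii) follow immediately, while (iii)--(vi) are handled by the same elementary estimates on the Legendre--Fenchel supremum. Your write-up is in fact slightly more detailed than the paper's in a couple of places (e.g., the uniqueness of the minimizer in (iv) and the explicit lower bound $tR - \omega_m(t)$ for (vi)), but there is no substantive difference in method.
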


\begin{proof}
Since $W_m$ is smooth
and $D_p^2 W_p \geq m^{-1} I$,
we have that $W_m$ is in fact uniformly convex
and $W_m(p)/\abs{p} \to \infty$ as $\abs{p} \to \infty$. 
Therefore the supremum in \eqref{legendre-fenchel}
is attained at a unique point $p$ such that $x = \nabla_p W_m(p)$,
and $p$ can be written as a smooth function of $x$
using the implicit function theorem.
(ii) is clear since $W_m$ strictly convex,
radially symmetric;
(iii) holds because $W_m$ are decreasing in $m$;
(iv) follows from \eqref{legendre-fenchel} with $p = 0$;
we observe that $m^{-1} \leq W_m^* < 0$ in $\set{\abs{x} \leq 1}$,
which yields (v);
(vi) can be derived from convexity of $W_m^*$,
\eqref{legendre-fenchel}
and from the fact that $W_m(p) \searrow \abs{p}$ as $m \to \infty$;
finally, (vii) follows from the Fenchel identity
$x = \nabla_p W_m(p)$ if and only if $p = \nabla W_m^*(x)$,
which gives,
using the formula in Proposition~\ref{pr:Em-properties}(b),
$-\partial^0 E_m(W_m^*)(x) = \divo \bra{(\nabla_p W_m)(\nabla W_m^*(x))}
    = \divo x = n.$
\qedhere\end{proof}

Since $F$ depends on the gradient as well,
while the gradient of $W_m^*$ blows up as $m \to 0$,
we consider rescaled cut-offs of these functions
with linear growth at infinity.
Given positive parameters $A$ and $q$,
where $A$ will represent scaling and $q$
the maximal slope,
we introduce a smooth, nondecreasing cut-off function
\begin{align*}
\ta_q(s) = 
\begin{cases}
s & \abs{s} \leq q/2,\\
q \sign s & \abs{s} \geq 2 q.
\end{cases}
\end{align*}
Since $W_m^*$ are smooth, radially symmetric,
we define the rescaled modified Wulff functions on $\Rn$ as
\begin{align*}
w_m(x; A, q) =
    \int_0^{\abs{x}}
        \ta_q\pth{\abs{\nabla W_m^*}(A s x/\abs{x})} \diff s.
\end{align*}

\begin{lemma}
\label{le:wm-properties}
Let $A$ and $q$ be positive constants. Then
$w_m(x; A, q)$ are smooth, convex,
radially symmetric, nonnegative functions on $\Rn$, and
they coincide with $A^{-1} W_m^*(A x) + (A m)^{-1}$ on
the closed ball $\set{\abs{\nabla W_m^*}(A x) \leq q/2}$.
Moreover, $\abs{\nabla w(\cdot; A, q)} \leq q$.
\end{lemma}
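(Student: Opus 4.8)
The plan is to reduce the lemma to a one-variable statement, exploiting that $W_m^*$ is radial. By Lemma~\ref{le:Wulff} we may write $W_m^*(x) = \Phi(\abs{x})$ with $\Phi \in C^\infty([0,\infty))$ convex; since $W_m^*$ attains its strict minimum at the origin with value $-m^{-1}$ (Lemma~\ref{le:Wulff}(iv)), we have $\Phi(0) = -m^{-1}$ and $\Phi'(0) = 0$, and convexity then forces $\Phi' \geq 0$ nondecreasing on $[0,\infty)$. Since $\nabla W_m^*(y) = \Phi'(\abs{y})\,y/\abs{y}$, we get $\abs{\nabla W_m^*}(y) = \Phi'(\abs{y})$, and because $\abs{Asx/\abs{x}} = As$ the integrand in the definition of $w_m$ equals $\ta_q(\Phi'(As))$. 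Hence
\[
w_m(x; A, q) = \psi(\abs{x}), \qquad \psi(r) := \int_0^r \ta_q(\Phi'(As))\,ds ,
\]
which already gives that $w_m$ is radially symmetric.

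Next I would record the one-variable properties. We have $\psi'(r) = \ta_q(\Phi'(Ar)) \geq 0$, using $\Phi' \geq 0$, $\ta_q$ nondecreasing and $\ta_q(0) = 0$; together with $\psi(0) = 0$ this gives $\psi \geq 0$, hence $w_m \geq 0$. Differentiating once more, $\psi''(r) = A\,\ta_q'(\Phi'(Ar))\,\Phi''(Ar) \geq 0$, since $\ta_q' \geq 0$ and $\Phi'' \geq 0$ by the convexity of $W_m^*$; as a radial function $\psi(\abs{x})$ with $\psi$ convex and nondecreasing is convex on $\Rn$ (via $\abs{tx+(1-t)y} \leq t\abs{x}+(1-t)\abs{y}$ and monotonicity), $w_m$ is convex. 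The slope bound is immediate: $\abs{\nabla w_m(x)} = \psi'(\abs{x}) = \ta_q(\Phi'(A\abs{x})) \leq q$ because $0 \leq \ta_q \leq q$ on $[0,\infty)$.

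It remains to establish smoothness on all of $\Rn$ together with the identification with $A^{-1}W_m^*(Ax) + (Am)^{-1}$. Since $\Phi'$ is continuous, nondecreasing and $\Phi'(0) = 0 < q/2$, the set $\set{\abs{\nabla W_m^*}(Ax) \leq q/2} = \set{\Phi'(A\abs{x}) \leq q/2}$ is a closed ball $\cl{B_{\rho_0}(0)}$ with $\rho_0 > 0$ (possibly all of $\Rn$). For $\abs{x} \leq \rho_0$ and $0 \leq s \leq \abs{x}$ we have $0 \leq \Phi'(As) \leq q/2$, so $\ta_q(\Phi'(As)) = \Phi'(As)$, whence
\[
\psi(\abs{x}) = \int_0^{\abs{x}} \Phi'(As)\,ds = A^{-1}\bigl(\Phi(A\abs{x}) - \Phi(0)\bigr) = A^{-1} W_m^*(Ax) + (Am)^{-1} .
\]
This proves the claimed coincidence, and in particular $w_m$ is $C^\infty$ on $\cl{B_{\rho_0}(0)}$, being a rescaling of the smooth $W_m^*$ plus a constant. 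Away from the origin, $w_m(x) = \psi(\abs{x})$ is a composition of the smooth function $\psi$ with $\abs{\cdot}$, which is smooth on $\Rn \setminus \set0$; since $\rho_0 > 0$ the two regions cover $\Rn$, so $w_m$ is smooth everywhere.

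The only genuinely delicate point is smoothness at the origin — radial functions need not be smooth there in general — and it is resolved precisely by the local identification of $w_m$ with a rescaled copy of the smooth $W_m^*$; all the other steps are routine one-variable calculus combined with the radial reduction.
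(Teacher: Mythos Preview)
Your proof is correct. The paper states this lemma without proof, treating it as routine; your radial reduction to the one-variable function $\psi(r)=\int_0^r \ta_q(\Phi'(As))\,ds$ and the observation that smoothness at the origin follows from the local identification with $A^{-1}W_m^*(Ax)+(Am)^{-1}$ is exactly the kind of verification the authors had in mind.
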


Functions $w_m$ will be used to build barriers at $t = 0$
for the existence argument.

\begin{lemma}
\label{le:cutoff-wulff-solutions}
Given positive constants $A$ and $q$,
there exist universal constants $B = B(A, q, F, n)$
and $m_0 = m_0(A, q, F, n)$
such that, for all $m > m_0$,
\begin{align*}
\phi_m(x,t;A, B, q) := B t + w_m(x; A, q)
\end{align*}
is a (classical) supersolution of \eqref{approximate-problem}
in $\Rn \times [0, \infty)$,
and $-\phi_m(x,t;A,B,q)$ is a (classical) subsolution.
\end{lemma}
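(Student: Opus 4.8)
The plan is to reduce the claim to two uniform-in-$m$ bounds on the Wulff barrier $w_m$, and then fix $B$ by a compactness argument. Recall from Lemma~\ref{le:wm-properties} that $w_m(\cdot;A,q)$ is smooth, radially symmetric, convex, and satisfies $\abs{\nabla w_m}\leq q$. Since $\phi_m=Bt+w_m$ has $\partial_t\phi_m\equiv B$, $\nabla\phi_m=\nabla w_m$, and (interpreting $-\partial^0 E_m$ via its local expression \eqref{approximate-operator}, as in Proposition~\ref{pr:Em-properties}(c)) the quantity $-\partial^0 E_m(\phi_m)=\divo\bra{(\nabla_p W_m)(\nabla w_m)}$ is unaffected by the additive term $Bt$, it suffices to produce a constant $C_0=C_0(A,q,n)$ such that
\begin{align}
\label{plan-curv-bound}
0\leq \divo\bra{(\nabla_p W_m)(\nabla w_m)}(x)\leq C_0
    \qquad\text{for all } x\in\Rn,\ m\geq m_0.
\end{align}
Granting \eqref{plan-curv-bound}, the continuity of $F$ makes it bounded on the compact set $\cl B_q(0)\times[-C_0,C_0]$, and choosing $B$ above that bound gives $B+F(\nabla w_m,\divo\bra{(\nabla_p W_m)(\nabla w_m)})\geq 0$, i.e. $\phi_m$ is a classical supersolution. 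For $-\phi_m$ one uses that $W_m$ is even, hence $\nabla_p W_m$ is odd, so $\divo\bra{(\nabla_p W_m)(\nabla(-w_m))}=-\divo\bra{(\nabla_p W_m)(\nabla w_m)}$ still lies in $[-C_0,C_0]$, and the same $B$ yields the subsolution inequality $-B+F(-\nabla w_m,-\divo\bra{(\nabla_p W_m)(\nabla w_m)})\leq 0$.

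To establish \eqref{plan-curv-bound} I would pass to radial variables. Writing $W_m^*(y)=\omega_m(\abs y)$ and $W_m(p)=\mu_m(\abs p)$ for the increasing convex radial profiles, one has $w_m(x)=\psi_m(\abs x)$ with $\psi_m'(\rho)=\ta_q(\omega_m'(A\rho))$, hence $(\nabla_p W_m)(\nabla w_m(x))=G_m(\abs x)\,x/\abs x$ with $G_m(\rho):=\mu_m'(\ta_q(\omega_m'(A\rho)))$, and therefore
\begin{align*}
\divo\bra{(\nabla_p W_m)(\nabla w_m)}(x)=G_m'(\rho)+(n-1)\frac{G_m(\rho)}{\rho},
    \qquad\rho=\abs x.
\end{align*}
Being a composition of nonnegative nondecreasing functions, $G_m$ is nonnegative and nondecreasing, which yields the lower bound in \eqref{plan-curv-bound} (equivalently, $\divo\bra{(\nabla_p W_m)(\nabla\psi)}\geq 0$ for convex $\psi$ since $W_m$ is convex). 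On the central ball $\set{\omega_m'(A\abs x)\leq q/2}$ the cutoff is inactive, so $\psi_m'=\omega_m'(A\cdot)$ and the Fenchel identity $\omega_m'=(\mu_m')^{-1}$ — equivalently, Lemma~\ref{le:Wulff}(vii) together with the fact that there $w_m=A^{-1}W_m^*(A\cdot)+(Am)^{-1}$ — gives $G_m(\rho)=A\rho$, hence $\divo\bra{(\nabla_p W_m)(\nabla w_m)}\equiv nA$: this is just the rescaled defining property of the Wulff function.

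Outside the central ball I would bound the two terms separately and uniformly in $m$. Since $G_m$ is nondecreasing and $\ta_q\leq q$, we get $G_m\leq \mu_m'(q)=q(q^2+m^{-2})^{-1/2}+2m^{-1}q\leq 1+2q$ for $m\geq 1$; moreover the central radius $r_m=A^{-1}\mu_m'(q/2)$ is bounded below by a positive constant depending only on $A,q$, so $(n-1)G_m(\rho)/\rho\leq (n-1)\mu_m'(q)/r_m$ is controlled. For $G_m'$ the key observation is that $G_m'\equiv 0$ once $\omega_m'(A\rho)\geq 2q$ (there $\ta_q'=0$), so $G_m'$ is nonzero only on the transition annulus where $\omega_m'(A\rho)\in(q/2,2q)$; on it, using $\omega_m'=(\mu_m')^{-1}$ once more,
\begin{align*}
G_m'(\rho)=A\,\ta_q'(\omega_m'(A\rho))\,
    \frac{\mu_m''(\ta_q(\omega_m'(A\rho)))}{\mu_m''(\omega_m'(A\rho))},
\end{align*}
where both arguments of $\mu_m''$ lie in the fixed compact interval $[q/2,2q]\subset(0,\infty)$. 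The main obstacle is precisely this: one must bound the ratio of the degenerating coefficients $\mu_m''$ uniformly in $m$, exploiting that the relevant slopes stay away from $0$. From $\mu_m''(s)=m^{-2}(s^2+m^{-2})^{-3/2}+2m^{-1}$ one sees that, after dividing numerator and denominator by $2m^{-1}$, each factor has the form $1+\frac12 m^{-1}(s^2+m^{-2})^{-3/2}$, which for $s\geq q/2$ lies between $1$ and $1+4q^{-3}$ for every $m\geq 1$; hence the ratio is uniformly bounded, and with boundedness of $\ta_q'$ this gives $G_m'(\rho)\leq C_0(A,q)$. Adding the two bounds establishes \eqref{plan-curv-bound}, and the lemma follows; in fact one may take $m_0=1$, although enlarging $m_0$ does no harm.
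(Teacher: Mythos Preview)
Your proof is correct and follows essentially the same strategy as the paper: split $\Rn$ into the central region where the cutoff is inactive (there the rescaled Wulff identity gives $-\partial^0 E_m(w_m)\equiv nA$) and the exterior, bound the curvature uniformly in $m$ on each, and then choose $B$ as the maximum of $|F|$ over the resulting compact set $\cl B_q(0)\times[-C_0,C_0]$; the subsolution claim for $-\phi_m$ follows from the oddness of $\nabla_p W_m$. The only difference is cosmetic: the paper fixes $m_0=4/q$ so that the central ball always contains $\{|x|\leq A^{-1}/2\}$ and then declares the exterior bound a ``straightforward calculation'', whereas you bound the central radius $r_m=A^{-1}\mu_m'(q/2)$ from below directly for all $m\geq 1$ and carry out that calculation explicitly via the ratio $\mu_m''(\ta_q(\omega_m'(A\rho)))/\mu_m''(\omega_m'(A\rho))$ on the transition annulus --- your version is in fact more detailed than the paper's.
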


\begin{proof}
We choose $m_0$ large enough
to guarantee that $\nabla w_m(x) = \nabla W_m^*(Ax)$
in a neighborhood of $x = 0$, uniformly in $m > m_0 = 4/q$.

Let us now explain why $m_0 = 4/q$ is a reasonable choice.
We first observe that
Lemma~\ref{le:Wulff} implies that $\abs{\nabla W_m^*}(A x)$
is an increasing function of $\abs{x}$,
and that $-(Am)^{-1} \leq A^{-1} W_m^*(A x) < 0$
in $\set{\abs{x} \leq A^{-1}}$.
From this, and the choice of $m_0 = 4/q$, we can conclude that
$\abs{\nabla W_m^*}(A x) \leq (Am)^{-1}/(A^{-1}/2) = 2 m^{-1} \leq q/2$
in $\set{\abs{x} \leq A^{-1}/2}$.
Finally, Lemma~\ref{le:wm-properties} and the chain rule yield that
$\nabla w_m(x) = \nabla W_m^*(Ax)$
for all $\abs{x} \leq A^{-1}/2$ and $m \geq m_0$.

To find $B$,
we split $\Rn$ into two sets:
\begin{compactitem}
\item if $x$ such that $\abs{\nabla W_m^*(A x)} \leq q/2$:
    then $\nabla w_m(x) = \nabla W_m^*(A x)$ and thus
    we conclude that $-\partial^0 E_m(w_m)(x) = A n$;
\item otherwise: then as observed above, 
    we must have $\abs{x} > A^{-1}/2$ and
    $\abs{\nabla w_m} \geq q/2$,
    which yields $\abs{-\partial E_m(w_m)(x)} \leq C$
    for some constant $C$, $C\geq An$,
    independent of $x$ and $m \geq m_0$;
    this is a straightforward calculation,
    by evaluating the operator \eqref{approximate-operator}
    applied to $w_m$, which is radially symmetric,
    and observing that it can be bounded
    from above, independently of $m$ and $x$,
    provided that $\abs{x} \geq A^{-1}/2$ and $\abs{\nabla w_m} \geq q/2$.
\end{compactitem}
Finally, by construction, $\abs{\nabla{w_m}} \leq q$ in $\Rn$.
Since $F$ is continuous,
\[B := \max_{\abs{p} \leq q, \abs{s} \leq C} \abs{F(p, s)} \leq \infty.\]
$\phi_m$ are smooth and we have
\begin{align*}
(\phi_m)_t + F(\nabla \phi_m, -\partial E_m(\phi_m)) \geq
    B - B =0.
\end{align*}
From this we see that $\phi_m$ is a classical supersolution
of \eqref{approximate-problem}.
Since $-\partial E_m$ is odd, the same computation shows that
$-\phi_m$ is a subsolution.
\qedhere\end{proof}

Let us conclude by finishing the proof of the existence theorem.

\begin{proof}[Proof of Theorem~\ref{th:existence}]
\emph{Step 1.}
For every $m\geq1$, there exists a unique solution $u_m$
of \eqref{approximate-problem}
with initial data $u_0$.

Since $u_0$ is bounded, $u_m$ are uniformly bounded,
by comparison principle,
on sets $\Tn \times [0, T]$ for every $T > 0$.
Therefore the following half-relaxed limits are bounded:
\begin{align*}
\overline{u} &:= \halflimsup_{m\to\infty} u_m,&
\underline{u} &:= \halfliminf_{m\to\infty} u_m.
\end{align*}
Additionally, by Theorem~\ref{th:stability}, $\overline{u}$
is a supersolution and $\underline{u}$ is a subsolution of \eqref{tvf}.
Furthermore,
\begin{align*}
\underline{u}(x,0) \leq u_0(x) \leq \overline{u}(x,0).
\end{align*}

\emph{Step 2.}
We need to show that in fact the above inequality is an equality.
To achieve that, we will construct a class of radially symmetric
sub- and supersolutions of \eqref{approximate-problem} for all $m$,
arbitrarily close to the initial data.

We set $K := 2 \norm{u_0}_\infty < \infty$.
Recall that $\Tn$ is compact,
hence $u_0$ has a uniform modulus of continuity.
For any fixed $\de > 0$, there exists $r \in (0,1/8)$
such that 
\begin{align}
\label{uniform-cont-u0}
\abs{u_0(x) - u_0(\xi)} < \de
    &&\text{for all } \xi, x \in \Tn,\ \abs{x - \xi} \leq 4r.
\end{align}
We fix $\de > 0$ and set $A = r^{-1}$, $q = 2Kr^{-1}$.
By Lemma~\ref{le:Wulff}(vi),
there exists $\tilde m_0$ such that $A^{-1} W_m^*(A x) \geq K$
for all $x \in \Rn$, $\abs{x} \geq 2r$, and $m > \tilde m_0$.
We choose $m_0 \geq \tilde m_0$ so that the
conclusion of Lemma~\ref{le:cutoff-wulff-solutions} holds
with a universal constant $B$.

We claim that 
\begin{align}
\label{barrier-geq-K}
\phi_m(x, 0; A, B, q) &= w_m(x; A, q) \geq K
&&\text{for $\abs{x} \geq 3r$}.
\end{align}
Indeed, by convexity, $\abs{\nabla W_m^*}(Ax)$
and $\abs{\nabla w_a(\cdot; A, q)}(x)$ are
increasing functions of $\abs{x}$. If $\abs{\nabla W_m^*}(Ax)\leq q/2$
at $\abs{x} = 2r$ then
$w_A(y; A, q) \geq w_A(x; A, q) = A^{-1} W_m^*(A x) + (Am)^{-1} \geq K$
for $\abs{x} = 2r$ and $\abs{y} \geq 2r$.
On the other hand, if $\abs{\nabla W_m^*}(Ax)> q/2$
at $\abs{x} = 2r$,
then $w_A(x; A, q) \geq \frac{q}2 (\abs{x} - 2r)$ for $\abs{x} \geq 2r$,
and the claim is proved since
$\frac{q}2 ( \abs{x} - 2r) \geq \frac{qr}2 = K$ for $\abs{x} \geq 3r$.

For arbitrary $x, \xi \in \Tn$,
we set
\begin{align*}
\overline{\phi}_m^\xi (x,t)
    &:= - \mathring\phi_m(x - \xi,t; A, B, q) + u_0(\xi) - \de, \\
\underline{\phi}_m^\xi (x,t)
    &:= \mathring \phi_m(x - \xi,t; A, B, q) + u_0(\xi) + \de,
\end{align*}
where $\mathring \phi_m$ is the ``periodization'' of $\phi_m$, i.e.,
\begin{align*}
\mathring \phi_m(x, t; A, B, q)
    = \inf_{z \in \Z^n} \phi_m(x + z, t; A, B, q),
    \qquad x \in \Rn, t \geq 0.
\end{align*}
We remark that $\mathring\phi_m$ is still a viscosity supersolution
of \eqref{approximate-problem},
due to the stability properties of viscosity solutions. 
Since $\phi(x,0; A,B, q) \geq 0$, $4r \leq 1/2$,
and \eqref{barrier-geq-K} and
\eqref{uniform-cont-u0} hold,
we have
\begin{align*}
\overline{\phi}_m^\xi(\cdot, 0) \leq u_0
    \leq \underline{\phi}_m^\xi(\cdot, 0)
    \qquad \text{in $\Tn$ for $\xi \in \Tn$, $m > m_0$}.
\end{align*}
Therefore, by the parabolic comparison principle,
\begin{align}
\label{comp-barriers-at-0}
\overline{\phi}_m^\xi \leq u_m
    \leq \underline{\phi}_m^\xi
        \qquad \text{in $\Tn \times [0,\infty)$,
            for $\xi \in \Tn$, $m > m_0$}.
\end{align}
Recalling that
\begin{align*}
\overline \phi_m^\xi(x,t) \geq -B t + u_0(\xi) - \de - (Am)^{-1}
    \qquad \text{for $\abs{x - \xi} \leq r$,}
\end{align*}
we have
${\halfliminf_{m\to\infty} \overline \phi_m^\xi}(\xi, 0) = 
    u_0(\xi) - \de$.
    
We can similarly compute
${\halflimsup_{m\to\infty} \underline \phi_m^\xi}(\xi, 0)
    = u_0(\xi) + \de$
and we recover from \eqref{comp-barriers-at-0}
\begin{align*}
u_0(\xi) -\de \leq \uu(\xi, 0) \leq \ou(\xi, 0) \leq u_0(\xi) + \de
\qquad \text{for $\xi \in \Tn$}.
\end{align*}
Since $\de > 0$ was arbitrary,
we have $\uu(\cdot, 0) = \ou(\cdot, 0) = u_0$.

\emph{Step 3.}
The comparison theorem~\ref{th:comparison}
then yields $\uu = \ou$ in $\Tn \times [0, \infty)$,
and it is the unique solution of \eqref{tvf} with
initial data $u_0$.

To show the Lipschitz estimate, 
we use the standard comparison argument.
Suppose that $u_0 \in \Lip(\Tn)$,
and set $L = \norm{\nabla u}_\infty$,
and let $u$ be the unique viscosity solution.
Since the operator $F$ does not depend on neither $x$ nor $u$,
we observe that $u(x + h) + L\abs{h}$ is a viscosity supersolution
for any $h \in \Tn$.
Therefore, by comparison,
\[
u(x,t) \leq u(x +h,t) + L \abs{h}
    \qquad \text{for } x, h \in \Tn,\ t \geq 0.
\]
The Lipschitz estimate follows.
\qedhere\end{proof}

The proof of the existence theorem, Theorem~\ref{th:existence},
also gives the following approximation result.

\begin{theorem}
\label{th:full-stability}
Let $u_0 \in C(\Tn)$ and let $u_m$
be the unique viscosity
solutions of \eqref{approximate-problem}
on $\Tn \times (0,\infty)$ for $m \geq 1$.
Then $u_m$ converge locally uniformly
on $\Tn \times [0, \infty)$ as $m \to \infty$
to the unique viscosity solution $u$
of \eqref{tvf} with initial data $u_0$.
\end{theorem}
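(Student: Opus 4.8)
The plan is to recognize that Theorem~\ref{th:full-stability} has, in effect, already been proved in the course of establishing Theorem~\ref{th:existence}; all that remains is to translate the identity ``$\ou = \uu$'' into local uniform convergence by means of a standard compactness lemma relating half-relaxed limits to uniform convergence.

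First I would recall the setup from the proof of Theorem~\ref{th:existence}. For each $m \geq 1$ the problem \eqref{approximate-problem} with initial data $u_0$ has a unique viscosity solution $u_m$, and since $u_0$ is bounded, the comparison principle for \eqref{approximate-problem} gives that $\set{u_m}$ is uniformly bounded on $\Tn \times [0,T]$ for every $T > 0$. Hence the half-relaxed limits $\ou := \halflimsup_{m\to\infty} u_m$ and $\uu := \halfliminf_{m\to\infty} u_m$ are finite on $\Tn \times [0,\infty)$. Each $u_m$ is simultaneously a sub- and a supersolution of \eqref{approximate-problem}, so Theorem~\ref{th:stability} yields that $\ou$ is a subsolution and $\uu$ a supersolution of \eqref{tvf}. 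The barrier construction in Step~2 of the proof of Theorem~\ref{th:existence}, built from the cut-off Wulff functions of Lemma~\ref{le:cutoff-wulff-solutions}, shows $\ou(\cdot,0) = \uu(\cdot,0) = u_0$, and then the comparison principle Theorem~\ref{th:comparison} gives $\ou \leq \uu$ on $\Tn \times [0,\infty)$. Since $\uu \leq \ou$ always holds by definition of the half-relaxed limits, we conclude $\ou = \uu =: u$, which is therefore continuous and is the unique viscosity solution of \eqref{tvf} with initial data $u_0$, uniqueness again being a consequence of Theorem~\ref{th:comparison}.

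Second, I would invoke the elementary fact that if a locally uniformly bounded sequence $\set{u_m}$ on the locally compact set $\Tn \times [0,\infty)$ satisfies $\halflimsup_{m\to\infty} u_m = \halfliminf_{m\to\infty} u_m = u$ with $u$ continuous, then $u_m \to u$ locally uniformly. The proof is a short argument by contradiction: if convergence failed on some compact set $\mathcal{K}$, one could extract $m_k \to \infty$ and $(x_k, t_k) \to (x_*, t_*) \in \mathcal{K}$ with, say, $u_{m_k}(x_k, t_k) \geq u(x_k, t_k) + \varepsilon$ for all $k$; but $\limsup_k u_{m_k}(x_k, t_k) \leq \ou(x_*, t_*) = u(x_*, t_*)$ while $u(x_k, t_k) \to u(x_*, t_*)$ by continuity, a contradiction, and the reverse inequality is excluded symmetrically using $\uu$. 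This completes the proof. Since every ingredient is already available, there is no genuine obstacle here; the only point deserving attention is to ensure that the half-relaxed limits appearing in Theorem~\ref{th:stability} and in the proof of Theorem~\ref{th:existence} are consistently taken over the solutions $u_m$ carrying the same initial data $u_0$, so that the limit indeed coincides with the solution furnished by the existence theorem.
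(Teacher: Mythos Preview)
Your proof is correct and follows exactly the same approach as the paper: the paper's proof simply notes that the equality $u = \halflimsup_{m\to\infty} u_m = \halfliminf_{m\to\infty} u_m$ was established in the proof of Theorem~\ref{th:existence} and that this is equivalent to local uniform convergence. You have merely spelled out the standard compactness argument behind that equivalence, which the paper takes for granted.
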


\begin{proof}
It was shown in the proof of Theorem~\ref{th:existence}
that
\[
u = \halflimsup_{m\to\infty} u_m = \halfliminf_{m\to\infty} u_m,
\]
which is equivalent to the local uniform convergence.
\qedhere\end{proof}

\begin{remark}
Theorem~\ref{th:full-stability} implies that
in the case of the standard total variation flow
equation \eqref{standard-tvf}
the viscosity solutions defined in this paper
coincide with the semigroup solutions
given by the theory of monotone operators.
\end{remark}

\parahead{Acknowledgments}
The authors are grateful to Professor
Giovanni Bellettini for bringing
the paper of Giusti \cite{Giusti78} to their attention.
The work of the second author is partly supported
by the Japan Society for the Promotion of Science
through grant Kiban(S)21223001 and
Kiban(A)23244015.

\ifdefined\useamsrefs
\begin{bibdiv}
\begin{biblist}
\bib{ACM04}{book}{
   author={Andreu-Vaillo, Fuensanta},
   author={Caselles, Vicent},
   author={Maz{\'o}n, Jos{\'e} M.},
   title={Parabolic quasilinear equations minimizing linear growth
   functionals},
   series={Progress in Mathematics},
   volume={223},
   publisher={Birkh\"auser Verlag},
   place={Basel},
   date={2004},
   pages={xiv+340},
   isbn={3-7643-6619-2},
   review={\MR{2033382 (2005c:35002)}},
}

\bib{AG89}{article}{
   author={Angenent, Sigurd},
   author={Gurtin, Morton E.},
   title={Multiphase thermomechanics with interfacial structure. II.\
   Evolution of an isothermal interface},
   journal={Arch. Rational Mech. Anal.},
   volume={108},
   date={1989},
   number={4},
   pages={323--391},
   issn={0003-9527},
   review={\MR{1013461 (91d:73004)}},
   doi={10.1007/BF01041068},
}

\bib{Anz}{article}{
   author={Anzellotti, Gabriele},
   title={Pairings between measures and bounded functions and compensated
   compactness},
   journal={Ann. Mat. Pura Appl.},
   volume={135},
   date={1983},
   pages={293--318},
   issn={0003-4622},
   review={\MR{750538 (85m:46042)}},
   doi={10.1007/BF01781073},
}

\bib{AD}{article}{
   author={Attouch, H.},
   author={Damlamian, A.},
   title={Application des m\'ethodes de convexit\'e et monotonie \`a
   l'\'etude de certaines \'equations quasi lin\'eaires},
   language={French, with English summary},
   journal={Proc. Roy. Soc. Edinburgh Sect. A},
   volume={79},
   date={1977/78},
   number={1-2},
   pages={107--129},
   issn={0308-2105},
   review={\MR{0477473 (57 \#17000)}},
}

\bib{Attouch}{book}{
   author={Attouch, H.},
   title={Variational convergence for functions and operators},
   series={Applicable Mathematics Series},
   publisher={Pitman (Advanced Publishing Program)},
   place={Boston, MA},
   date={1984},
   pages={xiv+423},
   isbn={0-273-08583-2},
   review={\MR{773850 (86f:49002)}},
}

\bib{B10}{article}{
   author={Bellettini, G.},
   title={An introduction to anisotropic and crystalline mean curvature flow},
   journal={Hokkaido Univ. Tech. Rep. Ser. in Math.},
   volume={145},
   date={2010},
   pages={102--162},
}

\bib{BCCN06}{article}{
   author={Bellettini, Giovanni},
   author={Caselles, Vicent},
   author={Chambolle, Antonin},
   author={Novaga, Matteo},
   title={Crystalline mean curvature flow of convex sets},
   journal={Arch. Ration. Mech. Anal.},
   volume={179},
   date={2006},
   number={1},
   pages={109--152},
   issn={0003-9527},
   review={\MR{2208291 (2007a:53126)}},
   doi={10.1007/s00205-005-0387-0},
}

\bib{BGN00}{article}{
   author={Bellettini, G.},
   author={Goglione, R.},
   author={Novaga, M.},
   title={Approximation to driven motion by crystalline curvature in two
   dimensions},
   journal={Adv. Math. Sci. Appl.},
   volume={10},
   date={2000},
   number={1},
   pages={467--493},
   issn={1343-4373},
   review={\MR{1769163 (2001i:53109)}},
}

\bib{BN00}{article}{
   author={Bellettini, G.},
   author={Novaga, M.},
   title={Approximation and comparison for nonsmooth anisotropic motion by
   mean curvature in ${\bf R}^N$},
   journal={Math. Models Methods Appl. Sci.},
   volume={10},
   date={2000},
   number={1},
   pages={1--10},
   issn={0218-2025},
   review={\MR{1749692 (2001a:53106)}},
   doi={10.1142/S0218202500000021},
}

\bib{BNP99}{article}{
   author={Bellettini, G.},
   author={Novaga, M.},
   author={Paolini, M.},
   title={Facet-breaking for three-dimensional crystals evolving by mean
   curvature},
   journal={Interfaces Free Bound.},
   volume={1},
   date={1999},
   number={1},
   pages={39--55},
   issn={1463-9963},
   review={\MR{1865105 (2003i:53099)}},
   doi={10.4171/IFB/3},
}

\bib{BNP01a}{article}{
   author={Bellettini, G.},
   author={Novaga, M.},
   author={Paolini, M.},
   title={On a crystalline variational problem. I. First variation and
   global $L^\infty$ regularity},
   journal={Arch. Ration. Mech. Anal.},
   volume={157},
   date={2001},
   number={3},
   pages={165--191},
   issn={0003-9527},
   review={\MR{1826964 (2002c:49072a)}},
   doi={10.1007/s002050010127},
}
		
\bib{BNP01b}{article}{
   author={Bellettini, G.},
   author={Novaga, M.},
   author={Paolini, M.},
   title={On a crystalline variational problem. II. $BV$ regularity and
   structure of minimizers on facets},
   journal={Arch. Ration. Mech. Anal.},
   volume={157},
   date={2001},
   number={3},
   pages={193--217},
   issn={0003-9527},
   review={\MR{1826965 (2002c:49072b)}},
   doi={10.1007/s002050100126},
}

\bib{BNP01IFB}{article}{
   author={Bellettini, G.},
   author={Novaga, M.},
   author={Paolini, M.},
   title={Characterization of facet breaking for nonsmooth mean curvature
   flow in the convex case},
   journal={Interfaces Free Bound.},
   volume={3},
   date={2001},
   number={4},
   pages={415--446},
   issn={1463-9963},
   review={\MR{1869587 (2002k:53127)}},
   doi={10.4171/IFB/47},
}		

\bib{Br}{book}{
   label={Br},
   author={Br{\'e}zis, H.},
   title={Op\'erateurs maximaux monotones et semi-groupes de contractions
   dans les espaces de Hilbert},
   language={French},
   note={North-Holland Mathematics Studies, No. 5. Notas de Matem\'atica
   (50)},
   publisher={North-Holland Publishing Co.},
   place={Amsterdam},
   date={1973},
   pages={vi+183},
   review={\MR{0348562 (50 \#1060)}},
}

\bib{CaffarelliCabre}{book}{
   author={Caffarelli, Luis},
   author={Cabr{\'e}, Xavier},
   title={Fully nonlinear elliptic equations},
   series={American Mathematical Society Colloquium Publications},
   volume={43},
   publisher={American Mathematical Society},
   place={Providence, RI},
   date={1995},
   pages={vi+104},
   isbn={0-8218-0437-5},
   review={\MR{1351007 (96h:35046)}},
}

\bib{CasellesChambolle06}{article}{
   author={Caselles, Vicent},
   author={Chambolle, Antonin},
   title={Anisotropic curvature-driven flow of convex sets},
   journal={Nonlinear Anal.},
   volume={65},
   date={2006},
   number={8},
   pages={1547--1577},
   issn={0362-546X},
   review={\MR{2248685 (2007d:35143)}},
   doi={10.1016/j.na.2005.10.029},
}

\bib{CGG}{article}{
   label={CGGo},
   author={Chen, Yun Gang},
   author={Giga, Yoshikazu},
   author={Goto, Shun'ichi},
   title={Uniqueness and existence of viscosity solutions of generalized
   mean curvature flow equations},
   journal={J. Differential Geom.},
   volume={33},
   date={1991},
   number={3},
   pages={749--786},
   issn={0022-040X},
   review={\MR{1100211 (93a:35093)}},
}

\bib{CIL}{article}{
   author={Crandall, Michael G.},
   author={Ishii, Hitoshi},
   author={Lions, Pierre-Louis},
   title={User's guide to viscosity solutions of second order partial
   differential equations},
   journal={Bull. Amer. Math. Soc. (N.S.)},
   volume={27},
   date={1992},
   number={1},
   pages={1--67},
   issn={0273-0979},
   review={\MR{1118699 (92j:35050)}},
   doi={10.1090/S0273-0979-1992-00266-5},
}

\bib{DZ11}{book}{
   author={Delfour, M. C.},
   author={Zol{\'e}sio, J.-P.},
   title={Shapes and geometries},
   series={Advances in Design and Control},
   volume={22},
   edition={2},
   note={Metrics, analysis, differential calculus, and optimization},
   publisher={Society for Industrial and Applied Mathematics (SIAM)},
   place={Philadelphia, PA},
   date={2011},
   pages={xxiv+622},
   isbn={978-0-898719-36-9},
   review={\MR{2731611 (2012c:49002)}},
   doi={10.1137/1.9780898719826},
}

\bib{Evans89}{article}{
   author={Evans, Lawrence C.},
   title={The perturbed test function method for viscosity solutions of
   nonlinear PDE},
   journal={Proc. Roy. Soc. Edinburgh Sect. A},
   volume={111},
   date={1989},
   number={3-4},
   pages={359--375},
   issn={0308-2105},
   review={\MR{1007533 (91c:35017)}},
   doi={10.1017/S0308210500018631},
}

\bib{Evans}{book}{
   author={Evans, Lawrence C.},
   title={Partial differential equations},
   series={Graduate Studies in Mathematics},
   volume={19},
   edition={2},
   publisher={American Mathematical Society},
   place={Providence, RI},
   date={2010},
   pages={xxii+749},
   isbn={978-0-8218-4974-3},
   review={\MR{2597943 (2011c:35002)}},
}
	
\bib{FM}{article}{
   author={Fujiwara, Daisuke},
   author={Morimoto, Hiroko},
   title={An $L_{r}$-theorem of the Helmholtz decomposition of vector
   fields},
   journal={J. Fac. Sci. Univ. Tokyo Sect. IA Math.},
   volume={24},
   date={1977},
   number={3},
   pages={685--700},
   issn={0040-8980},
   review={\MR{0492980 (58 \#12023)}},
}

\bib{FG}{article}{
   author={Fukui, Toshihide},
   author={Giga, Yoshikazu},
   title={Motion of a graph by nonsmooth weighted curvature},
   conference={
      title={World Congress of Nonlinear Analysts '92},
      address={Tampa, FL},
      date={1992},
   },
   book={
      publisher={de Gruyter},
      place={Berlin},
      volume={I--IV},
   },
   date={1996},
   pages={47--56},
   review={\MR{1389060}},
}

\bib{GG98ARMA}{article}{
   author={Giga, Mi-Ho},
   author={Giga, Yoshikazu},
   title={Evolving graphs by singular weighted curvature},
   journal={Arch. Rational Mech. Anal.},
   volume={141},
   date={1998},
   number={2},
   pages={117--198},
   issn={0003-9527},
   review={\MR{1615520 (99j:35118)}},
}

\bib{GG98DS}{article}{
   author={Giga, Mi-Ho},
   author={Giga, Yoshikazu},
   title={A subdifferential interpretation of crystalline motion under
   nonuniform driving force},
   note={Dynamical systems and differential equations, Vol. I (Springfield,
   MO, 1996)},
   journal={Discrete Contin. Dynam. Systems},
   date={1998},
   number={Added Volume I},
   pages={276--287},
   issn={1078-0947},
   review={\MR{1720610 (2001c:35125)}},
}

\bib{GG99CPDE}{article}{
   author={Giga, Mi-Ho},
   author={Giga, Yoshikazu},
   title={Stability for evolving graphs by nonlocal weighted curvature},
   journal={Comm. Partial Differential Equations},
   volume={24},
   date={1999},
   number={1-2},
   pages={109--184},
   issn={0360-5302},
   review={\MR{1671993 (2000a:35103)}},
   doi={10.1080/03605309908821419},
}

\bib{GG00Gakuto}{article}{
   author={Giga, Mi-Ho},
   author={Giga, Yoshikazu},
   title={Crystalline and level set flow---convergence of a crystalline
   algorithm for a general anisotropic curvature flow in the plane},
   conference={
      title={Free boundary problems: theory and applications, I},
      address={Chiba},
      date={1999},
   },
   book={
      series={GAKUTO Internat. Ser. Math. Sci. Appl.},
      volume={13},
      publisher={Gakk\=otosho},
      place={Tokyo},
   },
   date={2000},
   pages={64--79},
   review={\MR{1793023 (2002f:53117)}},
}

\bib{GG01ARMA}{article}{
   author={Giga, Mi-Ho},
   author={Giga, Yoshikazu},
   title={Generalized motion by nonlocal curvature in the plane},
   journal={Arch. Ration. Mech. Anal.},
   volume={159},
   date={2001},
   number={4},
   pages={295--333},
   issn={0003-9527},
   review={\MR{1860050 (2002h:53117)}},
   doi={10.1007/s002050100154},
}

\bib{GG04}{article}{
   label={GG04},
   author={Giga, Mi-Ho},
   author={Giga, Yoshikazu},
   title={A PDE approach for motion of phase-boundaries by a singular
   interfacial energy},
   conference={
      title={Stochastic analysis on large scale interacting systems},
   },
   book={
      series={Adv. Stud. Pure Math.},
      volume={39},
      publisher={Math. Soc. Japan},
      place={Tokyo},
   },
   date={2004},
   pages={213--232},
   review={\MR{2073335 (2005e:35032)}},
}

\bib{GG10}{article}{
   label={GG10},
   author={Giga, Mi-Ho},
   author={Giga, Yoshikazu},
   title={Very singular diffusion equations: second and fourth order
   problems},
   journal={Jpn. J. Ind. Appl. Math.},
   volume={27},
   date={2010},
   number={3},
   pages={323-345},
   issn={0916-7005},
   review={\MR{2746654 (2011h:35149)}},
   doi={10.1007/s13160-010-0020-y},
}

\bib{GGR11}{article}{
   author={Giga, Mi-Ho},
   author={Giga, Yoshikazu},
   author={Rybka, Piotr},
   title={A comparison principle for singular diffusion equations with spatially inhomogeneous driving force for graphs},
   journal={Hokkaido University Preprint Series in Math., \#981},
   date={2011}
}

\bib{G04}{article}{
   label={G04},
   author={Giga, Yoshikazu},
   title={Singular diffusivity---facets, shocks and more},
   conference={
      title={Applied mathematics entering the 21st century},
   },
   book={
      editor={Hill, J. M.},
      editor={Moore, R.},
      publisher={SIAM},
      place={Philadelphia, PA},
   },
   date={2004},
   pages={121--138},
   review={\MR{2296265}},
}

\bib{Giga06}{book}{
   author={Giga, Yoshikazu},
   title={Surface evolution equations},
   series={Monographs in Mathematics},
   volume={99},
   note={A level set approach},
   publisher={Birkh\"auser Verlag},
   place={Basel},
   date={2006},
   pages={xii+264},
   isbn={978-3-7643-2430-8},
   isbn={3-7643-2430-9},
   review={\MR{2238463 (2007j:53071)}},
}

\bib{GGoR11}{article}{
   label={GGoR11},
   author={Giga, Yoshikazu},
   author={G{\'o}rka, Przemys{\l}aw},
   author={Rybka, Piotr},
   title={Evolution of regular bent rectangles by the driven crystalline
   curvature flow in the plane with a non-uniform forcing term},
   journal={Adv. Differential Equations},
   status={to appear},
}

\bib{GGM}{article}{
   label={GGuM},
   author={Giga, Yoshikazu},
   author={Gurtin, Morton E.},
   author={Matias, Jos{\'e}},
   title={On the dynamics of crystalline motions},
   journal={Japan J. Indust. Appl. Math.},
   volume={15},
   date={1998},
   number={1},
   pages={7--50},
   issn={0916-7005},
   review={\MR{1610305 (99h:73008)}},
   doi={10.1007/BF03167395},
}

\bib{GPR}{article}{
   author={Giga, Yoshikazu},
   author={Paolini, Maurizio},
   author={Rybka, Piotr},
   title={On the motion by singular interfacial energy},
   note={Recent topics in mathematics moving toward science and
   engineering},
   journal={Japan J. Indust. Appl. Math.},
   volume={18},
   date={2001},
   number={2},
   pages={231--248},
   issn={0916-7005},
   review={\MR{1842909 (2002g:53115)}},
}

\bib{GR08}{article}{
   label={GR08},
   author={Giga, Yoshikazu},
   author={Rybka, Piotr},
   title={Facet bending in the driven crystalline curvature flow in the
   plane},
   journal={J. Geom. Anal.},
   volume={18},
   date={2008},
   number={1},
   pages={109--147},
   issn={1050-6926},
   review={\MR{2365670 (2010a:53136)}},
   doi={10.1007/s12220-007-9004-9},
}

\bib{GR09}{article}{
   label={GR09},
   author={Giga, Yoshikazu},
   author={Rybka, Piotr},
   title={Facet bending driven by the planar crystalline curvature with a
   generic nonuniform forcing term},
   journal={J. Differential Equations},
   volume={246},
   date={2009},
   number={6},
   pages={2264--2303},
   issn={0022-0396},
   review={\MR{2498842 (2010c:35084)}},
   doi={10.1016/j.jde.2009.01.009},
}

\bib{Gir}{article}{
   label={Gir},
   author={Gir{\~a}o, Pedro Martins},
   title={Convergence of a crystalline algorithm for the motion of a simple
   closed convex curve by weighted curvature},
   journal={SIAM J. Numer. Anal.},
   volume={32},
   date={1995},
   number={3},
   pages={886--899},
   issn={0036-1429},
   review={\MR{1335660 (96c:65144)}},
   doi={10.1137/0732041},
}
		
\bib{GirK}{article}{
   label={GirK},
   author={Gir{\~a}o, Pedro Martins},
   author={Kohn, Robert V.},
   title={Convergence of a crystalline algorithm for the heat equation in
   one dimension and for the motion of a graph by weighted curvature},
   journal={Numer. Math.},
   volume={67},
   date={1994},
   number={1},
   pages={41--70},
   issn={0029-599X},
   review={\MR{1258974 (94m:65138)}},
   doi={10.1007/s002110050017},
}

\bib{Giusti78}{article}{
   label={Gi},
   author={Giusti, Enrico},
   title={On the equation of surfaces of prescribed mean curvature.
   Existence and uniqueness without boundary conditions},
   journal={Invent. Math.},
   volume={46},
   date={1978},
   number={2},
   pages={111--137},
   issn={0020-9910},
   review={\MR{0487722 (58 \#7337)}},
}

\bib{Goto94}{article}{
   label={Go},
   author={Goto, Shun'ichi},
   title={Generalized motion of hypersurfaces whose growth speed depends
   superlinearly on the curvature tensor},
   journal={Differential Integral Equations},
   volume={7},
   date={1994},
   number={2},
   pages={323--343},
   issn={0893-4983},
   review={\MR{1255892 (94m:35143)}},
} 
 
\bib{IS}{article}{
   label={IS},
   author={Ishii, Hitoshi},
   author={Souganidis, Panagiotis},
   title={Generalized motion of noncompact hypersurfaces with velocity
   having arbitrary growth on the curvature tensor},
   journal={Tohoku Math. J. (2)},
   volume={47},
   date={1995},
   number={2},
   pages={227--250},
   issn={0040-8735},
   review={\MR{1329522 (96e:35069)}},
   doi={10.2748/tmj/1178225593},
}

\bib{Ishiwata08}{article}{
   label={I},
   author={Ishiwata, Tetsuya},
   title={Motion of non-convex polygons by crystalline curvature and almost
   convexity phenomena},
   journal={Japan J. Indust. Appl. Math.},
   volume={25},
   date={2008},
   number={2},
   pages={233--253},
   issn={0916-7005},
   review={\MR{2431681 (2009h:53148)}},
}

\bib{KL06}{article}{
   author={Kawohl, Bernd},
   author={Lachand-Robert, Thomas},
   title={Characterization of Cheeger sets for convex subsets of the plane},
   journal={Pacific J. Math.},
   volume={225},
   date={2006},
   number={1},
   pages={103--118},
   issn={0030-8730},
   review={\MR{2233727 (2007e:52002)}},
   doi={10.2140/pjm.2006.225.103},
} 

\bib{KMR}{article}{
   author={Kielak, K.},
   author={Mucha, Piotr B.},
   author={Rybka, Piotr},
   title={Almost classical solutions to the total variation flow},
   status={submitted},
}
 
\bib{Ko}{article}{
   author={K{\=o}mura, Yukio},
   title={Nonlinear semi-groups in Hilbert space},
   journal={J. Math. Soc. Japan},
   volume={19},
   date={1967},
   pages={493--507},
   issn={0025-5645},
   review={\MR{0216342 (35 \#7176)}},
}

\bib{KP}{book}{
   author={Krantz, Steven G.},
   author={Parks, Harold R.},
   title={The implicit function theorem},
   note={History, theory, and applications},
   publisher={Birkh\"auser Boston Inc.},
   place={Boston, MA},
   date={2002},
   pages={xii+163},
   isbn={0-8176-4285-4},
   review={\MR{1894435 (2003f:26001)}},
}

\bib{MR08}{article}{
   author={Mucha, Piotr B.},
   author={Rybka, Piotr},
   title={A caricature of a singular curvature flow in the plane},
   journal={Nonlinearity},
   volume={21},
   date={2008},
   number={10},
   pages={2281--2316},
   issn={0951-7715},
   review={\MR{2439480 (2009k:35125)}},
   doi={10.1088/0951-7715/21/10/005},
}

\bib{OhnumaSato97}{article}{
   author={Ohnuma, Masaki},
   author={Sato, Koh},
   title={Singular degenerate parabolic equations with applications to the
   $p$-Laplace diffusion equation},
   journal={Comm. Partial Differential Equations},
   volume={22},
   date={1997},
   number={3-4},
   pages={381--411},
   issn={0360-5302},
   review={\MR{1443043 (98d:35118)}},
   doi={10.1080/03605309708821268},
}

\bib{Rod}{book}{
   author={Rodrigues, Jos{\'e}-Francisco},
   title={Obstacle problems in mathematical physics},
   series={North-Holland Mathematics Studies},
   volume={134},
   note={Notas de Matem\'atica [Mathematical Notes], 114},
   publisher={North-Holland Publishing Co.},
   place={Amsterdam},
   date={1987},
   pages={xvi+352},
   isbn={0-444-70187-7},
   review={\MR{880369 (88d:35006)}},
}

\bib{Taylor91}{article}{
   author={Taylor, Jean E.},
   title={Constructions and conjectures in crystalline nondifferential
   geometry},
   conference={
      title={Differential geometry},
   },
   book={
      series={Pitman Monogr. Surveys Pure Appl. Math.},
      volume={52},
      publisher={Longman Sci. Tech.},
      place={Harlow},
   },
   date={1991},
   pages={321--336},
   review={\MR{1173051 (93e:49004)}},
}
\end{biblist}
\end{bibdiv}
\else

\fi

\end{document}